\newtheorem{maintheorem}{Main Theorem.}
\newtheorem{mainapplication}{Main Application.}
\newtheorem{corollarynonumber}{Corollary.}
\spnewtheorem*{remark*}{Remark}{\it}{\rm}
\spnewtheorem*{remarks*}{Remarks}{\it}{\rm}
\spnewtheorem*{firstproof*}{First proof}{\it}{\rm}
\spnewtheorem*{secondproof*}{Second proof}{\it}{\rm}
   \let\temp\relax
   \let\temp 
 \chardef\EPSFCatAt\the\catcode`\@
 \chardef\C@tColon\the\catcode`\:
 \chardef\C@tSemicolon\the\catcode`\;
 \chardef\C@tQmark\the\catcode`\?
 \chardef\C@tEmark\the\catcode`\!
 \chardef\C@tDqt\the\catcode`\"
 \def\PunctOther@{\catcode`\:=12
   \catcode`\;=12 \catcode`\?=12 \catcode`\!=12 \catcode`\"=12}
 \let\wlog@ld\wlog 
 \def\wlog#1{\relax} 
 \newdimen\XShift@ \newdimen\YShift@ 
 \newtoks\Realtoks
 \newdimen\Wd@ \newdimen\Ht@
 \newdimen\Wd@@ \newdimen\Ht@@
 \newdimen\TT@
 \newdimen\LT@
 \newdimen\BT@
 \newdimen\RT@
 \newdimen\XSlide@ \newdimen\YSlide@ 
 \newdimen\TheScale  %% secretly scale in mils: 1pt= 1mil 
 \newdimen\FigScale  %% secretly scale in mils: 1pt= 1mil 
 \newdimen\ForcedDim@@
 \newtoks\EPSFDirectorytoks@
 \newtoks\EPSFNametoks@
 \newtoks\BdBoxtoks@
 \newtoks\LLXtoks@  %% useful info for Oz
 \newtoks\LLYtoks@
 \newif\ifNotIn@
 \newif\ifForcedDim@
 \newif\ifForceOn@
 \newif\ifForcedHeight@
 \newif\ifPSOrigin
 \newread\EPSFile@ 
  \def\ms@g{\immediate\write16}
 \newif\ifIN@\def\IN@{\expandafter\INN@\expandafter}
  \long\def\INN@0#1@#2@{\long\def\NI@##1#1##2##3\ENDNI@
    {\ifx\m@rker##2\IN@false\else\IN@true\fi}%
     \expandafter\NI@#2@@#1\m@rker\ENDNI@}
  \def\m@rker{\m@@rker}
  \newtoks\Initialtoks@  \newtoks\Terminaltoks@
  \def\SPLIT@{\expandafter\SPLITT@\expandafter}
  \def\SPLITT@0#1@#2@{\def\TTILPS@##1#1##2@{%
     \Initialtoks@{##1}\Terminaltoks@{##2}}\expandafter\TTILPS@#2@}
  \newtoks\Trimtoks@
 \def\ForeTrim@{\expandafter\ForeTrim@@\expandafter}
 \def\ForePrim@0 #1@{\Trimtoks@{#1}}
 \def\ForeTrim@@0#1@{\IN@0\m@rker. @\m@rker.#1@%
     \ifIN@\ForePrim@0#1@%
     \else\Trimtoks@\expandafter{#1}\fi}
  \def\Trim@0#1@{%
      \ForeTrim@0#1@%
      \IN@0 @\the\Trimtoks@ @%
        \ifIN@ 
             \SPLIT@0 @\the\Trimtoks@ @\Trimtoks@\Initialtoks@
             \IN@0\the\Terminaltoks@ @ @%
                 \ifIN@
                 \else \Trimtoks@ {FigNameWithSpace}%
                 \fi
        \fi
      }
   \newtoks\pt@ks
   \def\getpt@ks 0.0#1@{\pt@ks{#1}}
  \newtoks\Realtoks% the output!
  \def\Real#1{%
    \dimen2=#1%
      \SPLIT@0\the\pt@ks @\the\dimen2@%%  lop off the points
       \Realtoks=\Initialtoks@%\showthe\Realtoks
            }
   \newdimen\Product
   \def\Mult#1#2{%
     \dimen4=#1\relax
     \dimen6=#2%
     \Real{\dimen4}%
     \Product=\the\Realtoks\dimen6%
        }
 \newdimen\Inverse
 \newdimen\hmxdim@ \hmxdim@=8192pt%halfmaxdimen
 \def\Invert#1{%
  \Inverse=\hmxdim@
  \dimen0=#1%
  \divide\Inverse \dimen0%
  \multiply\Inverse 8}
   \def\Rescale#1#2#3{% Adequate accuracy. Can improve. 
              \divide #1 by 100\relax
              \dimen2=#3\divide\dimen2 by 100 \Invert{\dimen2}% 
              \Mult{#1}{#2}%
              \Mult\Product\Inverse 
              #1=\Product}
  \def\Scale#1{\dimen0=\TheScale %
      \divide #1 by  1280 %% 1280*5120*10=1000*2^16 
      \divide \dimen0 by 5120 % 
      \multiply#1 by \dimen0 
      \divide#1 by 10   %% max size of #1 about 32000/10 pt
     }
 \newbox\scrunchbox
 \def\Scrunched#1{{\setbox\scrunchbox\hbox{#1}%
   \wd\scrunchbox=0pt
   \ht\scrunchbox=0pt
   \dp\scrunchbox=0pt
   \box\scrunchbox}}
 \def\Shifted@#1{%
   \vbox {\kern-\YShift@
       \hbox {\kern\XShift@\hbox{#1}\kern-\XShift@}%
           \kern\YShift@}}
 \def\cBoxedEPSF#1{{\leavevmode 
    %% double brace for amstex \allign, \alligned, ...
   \ReadNameAndScale@{#1}%
   \SetEPSFSpec@
   \ReadEPSFile@ \ReadBdB@x  
   %% Calculations
     \TrimFigDims@ 
     \CalculateFigScale@  
     \ScaleFigDims@
     \SetInkShift@
   \hbox{$\mathsurround=0pt\relax
         \vcenter{\hbox{%
             \FrameSpider{\hskip-.4pt\vrule}%
             \vbox to \Ht@{\offinterlineskip\parindent=\z@%
                \FrameSpider{\vskip-.4pt\hrule}\vfil 
                \hbox to \Wd@{\hfil}%
                \vfil
                \InkShift@{\EPSFSpecial{\EPSFSpec@}{\FigSc@leReal}}%
             \FrameSpider{\hrule\vskip-.4pt}}%
         \FrameSpider{\vrule\hskip-.4pt}}}%
     $\relax}%
    \CleanRegisters@ 
    \ms@g{ *** Box composed for the % 
         EPS file \the\EPSFNametoks@}%
    }}
 \def\tBoxedEPSF#1{\setbox4\hbox{\cBoxedEPSF{#1}}%
     \setbox4\hbox{\raise -\ht4 \hbox{\box4}}%
     \box4
      }
 \def\bBoxedEPSF#1{\setbox4\hbox{\cBoxedEPSF{#1}}%
     \setbox4\hbox{\raise \dp4 \hbox{\box4}}%
     \box4
      }
  \let\BoxedEPSF\cBoxedEPSF% default setting
   \let\BoxedArt\BoxedEPSF
  \def\gLinefigure[#1scaled#2]_#3{%
        \BoxedEPSF{#3 scaled #2}}
  \def\EPSFxsize{\afterassignment\ForceW@\ForcedDim@@}
      \def\ForceW@{\ForcedDim@true\ForcedHeight@false}
  \def\EPSFysize{\afterassignment\ForceH@\ForcedDim@@}
      \def\ForceH@{\ForcedDim@true\ForcedHeight@true}
  \def\EmulateRokicki{%
       \let\epsfbox\bBoxedEPSF \let\epsffile\bBoxedEPSF
       \let\epsfxsize\EPSFxsize \let\epsfysize\EPSFysize} 
 \def\ReadNameAndScale@#1{\IN@0 scaled@#1@% DOUBLE BARRELED
   \ifIN@\ReadNameAndScale@@0#1@%
   \else \ReadNameAndScale@@0#1 scaled\DefaultMilScale @%
   \fi}
 \def\ReadNameAndScale@@0#1scaled#2@{% HELPER MACRO
    \let\OldBackslash@\\%
    \def\\{\OtherB@ckslash}%
    \edef\temp@{#1}%
    \Trim@0\temp@ @%
    \EPSFNametoks@\expandafter{\the\Trimtoks@ }%
    \FigScale=#2 pt%
    \let\\\OldBackslash@
    }
 \def\SetDefaultEPSFScale#1{%
      \global\def\DefaultMilScale{#1}}
 \def \SetBogusBbox@{%
     \global\BdBoxtoks@{ BoundingBox:0 0 100 100 }%
     \global\def\BdBoxLine@{ BoundingBox:0 0 100 100 }%
     \ms@g{ !!! Will use placeholder !!!}%
     }
\gdef\P@S@{%!}\gdef\pct@@{%%}} %% %! min sign of PS file

 \def\ReadEPSFile@{%\show\EPSFSpec@%
     \openin\EPSFile@\EPSFSpec@
     \relax  %necessary to prevent precocious expansion of \ifeof
  \ifeof\EPSFile@
     \ms@g{}%
     \ms@g{ !!! EPS FILE \the\EPSFDirectorytoks@
       \the\EPSFNametoks@\space WAS NOT FOUND !!!}%
     \SetBogusBbox@
  \else%\fi
   \begingroup%%
   \catcode`\%=12\catcode`\:=12\catcode`\!=12
   \catcode"00=14 \catcode"7F=14 \catcode`\\=14 
    %% 14 = comment, terminates input line; 
    %% 5 = CR just picks up extra space
   \global\read\EPSFile@ to \BdBoxLine@ %\show\BdBoxLine@
   \IN@0\P@S@ @\BdBoxLine@ @%
   \ifIN@ %% %! accepted as %!PS so do BdBox search!!
     \NotIn@true
     \loop   
       \ifeof\EPSFile@\NotIn@false 
         \ms@g{}%
         \ms@g{ !!! BoundingBox NOT FOUND IN %
            \the\EPSFDirectorytoks@\the\EPSFNametoks@\space!!! }%
         \SetBogusBbox@
       \else\global\read\EPSFile@ to \BdBoxLine@
       %\show\BdBoxLine@
       \fi
       \global\BdBoxtoks@\expandafter{\BdBoxLine@}%
       \IN@0BoundingBox:@\the\BdBoxtoks@ @%
       \ifIN@\NotIn@false\fi%
     \ifNotIn@
     \repeat
   \else
         \ms@g{}%
         \ms@g{ !!! \the\EPSFNametoks@\space is not PostScript.}%
         \ms@g{ !!! It should begin with the "\P@S@". }%
         \ms@g{ !!! Also, all other header lines until }%
         \ms@g{ !!!  "\pct@@ EndComments"  should begin with "\pct@@". }%
         \SetBogusBbox@
   \fi
  \endgroup\relax
  \fi
  \closein\EPSFile@ 
   }

  %%% \ReadBdB@x
   % Rmk For simplicity 0 not used in syntax 
   %  of \ReadBdB@x@,  \ReadBdB@x@@ 
  \def\ReadBdB@x{% PART 0
   \expandafter\ReadBdB@x@\the\BdBoxtoks@ @}
  
  \def\ReadBdB@x@#1BoundingBox:#2@{% PART 1
    \ForeTrim@0#2@%
    \IN@0atend@\the\Trimtoks@ @%
       \ifIN@\Trimtoks@={0 0 100 100 }%
         \ms@g{}%
         \ms@g{ !!! BoundingBox not found in %
         \the\EPSFDirectorytoks@\the\EPSFNametoks@\space !!!}%
         \ms@g{ !!! It must not be at end of EPSF !!!}%
         \ms@g{ !!! Will use placeholder !!!}%
       \fi%% cf \SetBogusBbox@
    \expandafter\ReadBdB@x@@\the\Trimtoks@ @%
   }
    
  \def\ReadBdB@x@@#1 #2 #3 #4@{% PART 2
      \Wd@=#3bp\advance\Wd@ by -#1bp%
      \Ht@=#4bp\advance\Ht@ by-#2bp%
       \Wd@@=\Wd@ \Ht@@=\Ht@ %% useful info for Clark
       \LLXtoks@={#1}\LLYtoks@={#2}%% useful info for Oz
      \ifPSOrigin\XShift@=-#1bp\YShift@=-#2bp\fi 
     }

  %%% \SetEPSFDirectory 
   %
   \def\G@bbl@#1{}
   \bgroup
     \global\edef\OtherB@ckslash{\expandafter\G@bbl@\string\\}
   \egroup

  \def\SetEPSFDirectory{%  Part 1
           \bgroup\PunctOther@\relax
           \let\\\OtherB@ckslash
           \SetEPSFDirectory@}

 \def\SetEPSFDirectory@#1{% Part 2
    \edef\temp@{#1}%
    \Trim@0\temp@ @%  result in \Trimtoks@
    \global\toks1\expandafter{\the\Trimtoks@ }\relax
    \egroup
    \EPSFDirectorytoks@=\toks1
    }

 \def\SetEPSFSpec@{%
     \bgroup
     \let\\=\OtherB@ckslash
     \global\edef\EPSFSpec@{%
        \the\EPSFDirectorytoks@\the\EPSFNametoks@}%
     \global\edef\EPSFSpec@{\EPSFSpec@}%
     \egroup}

 %%% \TrimFigDims@ 
  % 
 \def\TrimTop#1{\advance\TT@ by #1}
 \def\TrimLeft#1{\advance\LT@ by #1}
 \def\TrimBottom#1{\advance\BT@ by #1}
 \def\TrimRight#1{\advance\RT@ by #1}

 \def\TrimBoundingBox#1{%
   \TrimTop{#1}%
   \TrimLeft{#1}%
   \TrimBottom{#1}%
   \TrimRight{#1}%
       }

 \def\TrimFigDims@{%
    \advance\Wd@ by -\LT@ 
    \advance\Wd@ by -\RT@ \RT@=\z@
    \advance\Ht@ by -\TT@ \TT@=\z@
    \advance\Ht@ by -\BT@ 
    }

 %%% \CalculateFigScale@
  %
  \def\ForceWidth#1{\ForcedDim@true
       \ForcedDim@@#1\ForcedHeight@false}
  
  \def\ForceHeight#1{\ForcedDim@true
       \ForcedDim@@=#1\ForcedHeight@true}

  \def\ForceOn{\ForceOn@true}
  \def\ForceOff{\ForceOn@false\ForcedDim@false}
  
  \def\CalculateFigScale@{%
            %Have default \FigScale or read \FigScale
     \ifForcedDim@\FigScale=1000pt% %% start afresh
           \ifForcedHeight@
                \Rescale\FigScale\ForcedDim@@\Ht@
           \else
                \Rescale\FigScale\ForcedDim@@\Wd@
           \fi
     \fi
     \Real{\FigScale}%
     \edef\FigSc@leReal{\the\Realtoks}%
     }
   
  \def\ScaleFigDims@{\TheScale=\FigScale
      \ifForcedDim@
           \ifForcedHeight@ \Ht@=\ForcedDim@@  \Scale\Wd@
           \else \Wd@=\ForcedDim@@ \Scale\Ht@
           \fi
      \else \Scale\Wd@\Scale\Ht@        
      \fi
      \ifForceOn@\relax\else\global\ForcedDim@false\fi
      \Scale\LT@\Scale\BT@  %%%\Scale\Wd@\Scale\Ht@
      \Scale\XShift@\Scale\YShift@
      }
      
  %%% \ShowDisplacementBoxes
   %%  shows (prints) corrected scaled and positioned
   %%  bounding boxes; for diagnostics
  %%% \HideDisplacementBoxes makes them invisible again
   %%
 \def\HideDisplacementBoxes{\global\def\FrameSpider##1{\null}}
 \def\ShowDisplacementBoxes{\global\def\FrameSpider##1{##1}}
 \let\HideFigureFrames\HideDisplacementBoxes %% some synonyms
 \let\ShowFigureFrames\ShowDisplacementBoxes
 \ShowDisplacementBoxes
 
  %%% \hSlide#1, \vSlide#1
   %%
 \def\hSlide#1{\advance\XSlide@ by #1}
 \def\vSlide#1{\advance\YSlide@ by #1}
 
  %%% \SetInkShift@, \InkShift@#1
   %%
  \def\SetInkShift@{%
            \advance\XShift@ by -\LT@
            \advance\XShift@ by \XSlide@
            \advance\YShift@ by -\BT@
            \advance\YShift@ by -\YSlide@
             }
  \def\InkShift@#1{\Shifted@{\Scrunched{#1}}}
 
  %%% \CleanRegisters@
   %
  \def\CleanRegisters@{%
      \globaldefs=1\relax
        \XShift@=\z@\YShift@=\z@\XSlide@=\z@\YSlide@=\z@
        \TT@=\z@\LT@=\z@\BT@=\z@\RT@=\z@
      \globaldefs=0\relax}

 %%% Special syntax for several drivers. The macros 
  %% \SetTexturesEPSFSpecial  %% Textures 
  %% \SetUnixCoopEPSFSpecial %% dvi2ps early unix 
  %% \SetBechtolsheimDVI2PSEPSFSpecial and 
  %% \SetBechtolsheimDVITPSEPSFSpecial %% by S.P.Bechtolsheim
  %% \SetLisEPSFSpecial %% dvi2ps by Tony Lis
  %% \SetRokickiEPSFSpecial  %% dvips by Tom Rokicki
  %%  --- also for DVIReader, in DirectTeX by W. Ricken
  %% \SetOzTeXEPSFSpecial  %% OzTeX (>=1.42) by Andrew Trevorrow
  %% \SetPSprintEPSFSpecial %% PSprint by Andrew Trevorrow
  %%  --- also for OzTeX versions <= 1.41 !!
  %% \SetArborEPSFSpecial  %% ArborTeX DVILASER/PS
  %% \SetClarkEPSFSpecial %% dvitops by James Clark
  %% \SetDVIPSoneEPSFSpecial %% DVIPSONE of Y&Y 
  %% \SetBeebeEPSFSpecial %% DVIALW by N. Beebe
  %% \SetNorthlakeEPSFSpecial %% Northlake Software
  %% \SetStandardEPSFSpecial %% Nonexistant: Placebo below
  %% Many drivers supported roughly
  %% by (re-)defining the macro \EPSFSpecial#1#2, where
  %% #1 = EPS file pathname (use \\ for the letter backslash)
  %% #2 = scale in mils 
  %% Be wary of using strange characters in pathnames!
 
 %% Textures, Blue Sky Research, Barry Smith
 \def\SetTexturesEPSFSpecial{\PSOriginfalse%\PSOrigintrue
  \gdef\EPSFSpecial##1##2{\relax
    \edef\specialtemp{##2}%
    \SPLIT@0.@\specialtemp.@\relax
    \special{illustration ##1 scaled
                        \the\Initialtoks@}}}
 
  %% Unix : dvi2ps by:  Mark Senn, Stephan  Bechtolsheim,  
   % Bob  Brown, Richard, Furuta, James Schaad, Robert  Wells, 
   % Norm Hutchinson, Neal Holt, Scott Jones, Howard Trickey.
   % Introduced by B. Horn <bkph@ai.mit.edu>
  \def\SetUnixCoopEPSFSpecial{\PSOrigintrue % Please test!
   \gdef\EPSFSpecial##1##2{%
      \dimen4=##2pt% convert real to dimen
      \divide\dimen4 by 1000\relax
      \Real{\dimen4}%dimens 0,2 used here
      \edef\Aux@{\the\Realtoks}%  
      %%convert dimen to real
      \includegraphics{##1\space}}}

  %% dvi2ps and dvitps by S.P. Bechtolsheim,
   % Introduced by B. Horn <bkph@ai.mit.edu> and Carl.M.Jones, 
   % testing by R. Evans <Robert@cm.cardiff.ac.uk>
   % and George Denk <denk@mathematik.tu-muenchen.de>
   % Note that a prolog file psfig.pro
   % specific to the driver should be available.
  \def\SetBechtolsheimEPSFSpecial@{%% tool macro only
   \PSOrigintrue
   \special{\DriverTag@ Include0 "psfig.pro"}%
   \gdef\EPSFSpecial##1##2{%
      \dimen4=##2pt %% convert real to dimen
      \divide\dimen4 by 1000\relax
      \Real{\dimen4} %% dimens 0,2 used here
      \edef\Aux@{\the\Realtoks}%% convert dimen to real
      \special{\DriverTag@ Literal "10 10 0 0 10 10 startTexFig
           \the\mag\space 1000 div 
           dup 3.25 neg mul 1 index .25 neg mul translate %% correction line
           \Aux@\space mul dup scale "}%
      \special{\DriverTag@ Include1 "##1"}%
      \special{\DriverTag@ Literal "endTexFig "}%
        }}

  \def\SetBechtolsheimDVITPSEPSFSpecial{\def\DriverTag@{dvitps: }%
      \SetBechtolsheimEPSFSpecial@}

  \def\SetBechtolsheimDVI2PSEPSFSSpecial{\def\DriverTag@{DVI2PS: }%
      \SetBechtolsheimEPSFSpecial@}

  %% dvi2ps by Tony Lis,
   % implantations? ; dates?; availability?
   % Introduced by B. Horn <bkph@ai.mit.edu>
  \def\SetLisEPSFSpecial{\PSOrigintrue 
   \gdef\EPSFSpecial##1##2{%
      \dimen4=##2pt% convert real to dimen
      \divide\dimen4 by 1000\relax
      \Real{\dimen4}% dimens 0,2 used here
      \edef\Aux@{\the\Realtoks}%  
      %%convert dimen to real
      \special{pstext="10 10 0 0 10 10 startTexFig\space
           \the\mag\space 1000 div \Aux@\space mul 
           \the\mag\space 1000 div \Aux@\space mul scale"}%
      \includegraphics{##1}%
      \special{pstext=endTexFig}%
        }}

  %% dvips by Tom Rokicki; free driver in portable C 
   % Introduced by W.D. Neumann <neumann@mps.ohio-state.edu>
  \def\SetRokickiEPSFSpecial{\PSOrigintrue 
   \gdef\EPSFSpecial##1##2{%
      \dimen4=##2pt% convert real to dimen
      \divide\dimen4 by 10\relax
      \Real{\dimen4}% dimens 0,2 used here
      \edef\Aux@{\the\Realtoks}%  
      %%convert dimen to real
      \includegraphics{##1}}}

  \def\SetInlineRokickiEPSFSpecial{\PSOrigintrue 
   \gdef\EPSFSpecial##1##2{%
      \dimen4=##2pt% convert real to dimen
      \divide\dimen4 by 1000\relax
      \Real{\dimen4}% dimens 0,2 used here
      \edef\Aux@{\the\Realtoks}%  
      %%convert dimen to real
      \special{ps::[begin] 10 10 0 0 10 10 startTexFig\space
           \the\mag\space 1000 div \Aux@\space mul 
           \the\mag\space 1000 div \Aux@\space mul scale}%
      \special{ps: plotfile ##1}%
      \special{ps::[end] endTexFig}%
        }}

 %%%  OzTeX (versions 1.42 and later), by Andrew Trevorrow
 %%%  (for earlier versions see PSprint below!!)
 %%  complete public domain TeX for Macintosh
 %%  Send 10 UNFORMATTED 800K disks 
 %%  with return postage to
 %%  Peter Abbott, Computing Service, 
 %%  Aston University, Aston Triangle, Birmingham B4 7ET
 %%  Posting: ftp   midway.uchicago.edu
 %%  Nota: Version 1.42 may give
 %%  spurious "offpage" error notices on printing.
 %%  Nota: No support yet for MacPaint files.
 \def\SetOzTeXEPSFSpecial{\PSOrigintrue
 \gdef\EPSFSpecial##1##2{%
 \dimen4=##2pt%% convert real to dimen
 \divide\dimen4 by 1000\relax
 \Real{\dimen4}%% dimens 0,2 used here
 \edef\Aux@{\the\Realtoks}%% convert dimen to real
 \special{epsf=\string"##1\string"\space scale=\Aux@}%
 }} 

 %% PSprint, by AndrewTrevorrow for VaX VMS
 %% and OzTeX versions <= 1.41  
  % tested 2-91 by Max Calviani <ISICA@ASTRPD.infn.it>
  \def\SetPSprintEPSFSpecial{\PSOriginFALSE % artifice; see below
   \gdef\EPSFSpecial##1##2{%note order
     \special{##1\space 
       ##2 1000 div \the\mag\space 1000 div mul
       ##2 1000 div \the\mag\space 1000 div mul scale
       \the\LLXtoks@\space neg \the\LLYtoks@\space neg translate
       }}}

 %% DVILASER/PS driver originally written by David Fuchs
  % marketed and supported by ArborTeXt  535 W. William St.
  % Suite 300, Ann Arbor, MI 48103, U.S.A
  % (313) 996-3566 (313) 996-3573
  % help@arbortext.com, Andrew Dobrowolski
 \def\SetArborEPSFSpecial{\PSOriginfalse % check!
   \gdef\EPSFSpecial##1##2{%
     \edef\specialthis{##2}%
     \SPLIT@0.@\specialthis.@\relax % suppress decimals (nec!)
     \special{ps: epsfile ##1\space \the\Initialtoks@}}}

 %% dvitops, (c) James Clark <jjc@jclark.uucp>
  % public domain; distributed by UK TeX Archive
  % computers: unix, msdos, vms, primos and vm/cms,
  % introduced by S. Ratz <spqr@uk.ac.southampton.ecs>
 \def\SetClarkEPSFSpecial{\PSOriginfalse % please test!
   \gdef\EPSFSpecial##1##2{%
     \Rescale {\Wd@@}{##2pt}{1000pt}%
     \Rescale {\Ht@@}{##2pt}{1000pt}%
     \special{dvitops: import 
           ##1\space\the\Wd@@\space\the\Ht@@}}}

 %% DVIPSONE, for PC compatibles
  % Y&Y, 106 Indian Hill, Carlisle MA 01741, USA
  % (508) 371-3286
  % (introduced by B. Horn <bkph@ai.mit.edu>)
  \let\SetDVIPSONEEPSFSpecial\SetUnixCoopEPSFSpecial
  \let\SetDVIPSoneEPSFSpecial\SetUnixCoopEPSFSpecial

 %% DVIALW by N. Beebe, public domain 
  % DVI Driver Distribution, Center for Scientific Computing,
  % Department of Mathematics, 220 South Physics Building,
  % University of Utah, Salt Lake City, UT 84112, USA
  % (introduced by B. Horn <bkph@ai.mit.edu>)
  % Proposed standard; see TUGboat article 1993.
  \def\SetBeebeEPSFSpecial{%please test!
   \PSOriginfalse% 
   \gdef\EPSFSpecial##1##2{\relax
    \special{language "PS",
      literal "##2 1000 div ##2 1000 div scale",
      position = "bottom left",
      include "##1"}}}
  \let\SetDVIALWEPSFSpecial\SetBeebeEPSFSpecial

 %% Northlake software
  \def\SetNorthlakeEPSFSpecial{\PSOrigintrue
   \gdef\EPSFSpecial##1##2{%
     \edef\specialthis{##2}%
     \SPLIT@0.@\specialthis.@\relax % suppress decimals (nec!)
     \special{insert ##1,magnification=\the\Initialtoks@}}}

 \def\SetStandardEPSFSpecial{%
   \gdef\EPSFSpecial##1##2{%
     \ms@g{}
     \ms@g{%
       !!! Sorry! There is still no standard for \string%
       \special\space EPSF integration !!!}%
     \ms@g{%
      --- So you will have to identify your driver using a command}%
     \ms@g{%
      --- of the form \string\Set...EPSFSpecial, in order to get}%
     \ms@g{%
      --- your graphics to print.  See BoxedEPS.doc.}%
     \ms@g{}
     \gdef\EPSFSpecial####1####2{}
     }}

  \SetStandardEPSFSpecial %% currently gives warning
 
 \let\wlog\wlog@ld %%restore logging 

 \catcode`\:=\C@tColon
 \catcode`\;=\C@tSemicolon
 \catcode`\?=\C@tQmark
 \catcode`\!=\C@tEmark
 \catcode`\"=\C@tDqt

 \catcode`\@=\EPSFCatAt

%%%%%%%%%%%% ASCII Character test
 %
 %       Upper case letters: ABCDEFGHIJKLMNOPQRSTUVWXYZ
 %       Lower case letters: abcdefghijklmnopqrstuvwxyz
 %                                   Digits: 0123456789
 % Square, curly, angle braces, parentheses: [] {} <> ()
 %           Backslash, slash, vertical bar: \ / |
 %                              Punctuation: . ? ! , : ;
 %          Underscore, hyphen, equals sign: _ - =
 %                Quotes--right left double: ' ` "
 %"at", "number" "dollar", "percent", "and": @ # $ % &
 %           "hat", "star", "plus", "tilde": ^ * + ~
 %
 %%%%%%%%%%%%%%%%%%%%%%%%
 %
 % Une seule erreur de transmission peut empoisoner un programme!
 %
 % A single transmission error can poison a whole program.
 %
 %%%%%%%%%%%%%%%%%%%%%%%%

 %% windows
\SetEPSFDirectory{} %% windows
\HideDisplacementBoxes
\SetRokickiEPSFSpecial  %% dvips by Tom Rokicki
%
%%%%%%%%%%%%%%%%%%%%%%%%%%%%%%%%%%%%%%%%%%%%%%%%%%%%%%%%%%%%%%%%%%%%%%
%
% xyfig
%
\input xy 
\xyoption{all} 
%
%%%%%%%%%%%%%%%%%%%%%%%%%%%%%%%%%%%%%%%%%%%%%%%%%%%%%%%%%%%%%%%%%%%%%%

% margin labels
%
\newcommand{\marginlabel}[1]
{\mbox{}\marginpar{\raggedleft\hspace{0pt}\small{{\red #1}}}}
\newcommand{\fillin}{{\bf FILL IN}}

%
%%%%%%%%%%%%%%%%%%%%%%%%%%%%%%%%%%%%%%%%%%%%%%%%%%%%%%%%%%%

%%%%%%%%%%%%%%%%%%%%%%%%%%%%%%%%%%%%%%%%%%%%%%%%%%%%%%%%%%%
%
% page sizes
%
\addtolength{\textheight}{-3cm}
\addtolength{\textwidth}{2cm}
\addtolength{\oddsidemargin}{-1cm}
\addtolength{\evensidemargin}{-1cm}

\renewcommand{\baselinestretch}{1.05}
%\addtolength{\topmargin}{-1cm}

%\addtolength{\textwidth}{2cm}
%\addtolength{\oddsidemargin}{-1cm}
%\addtolength{\evensidemargin}{-1cm}
%\addtolength{\topmargin}{-1cm}

%%%%%%%%%%%%%%%%%%%%%%%%%%%%%%%%%%%%%%%%%%%%%%%%%%%%%%%%%%%
%
% Brent's definitions
%
\DeclareMathAlphabet{\ams}{U}{msb}{m}{n}
\DeclareMathAlphabet{\goth}{U}{euf}{m}{n}
\def\so{\text{SO}}\def\pso{\text{PSO}}\def\po{\text{PO}}\def\sl{\text{SL}}
\def\psl{\text{PSL}}\def\pgl{\text{PGL}}\def\gl{\text{GL}}\def\ml{\text{ML}}
\def\m{\text{M}}\def\d{\text{D}}\def\su{\text{SU}}\def\sp{\text{Sp}}
\def\f{\text{F}}\def\pu{\text{PU}}\def\gal{\text{Gal}}\def\homeo{\text{Homeo}}
\def\hom{\text{Hom}}\def\id{\text{id}}\def\rr{\cal{R}}
\def\vol{\text{vol}\,}\def\covol{\text{covol}\,}
\def\ker{\text{ker}\,}
\def\char{\text{char}\,}
\def\aut{\text{Aut}}\def\isom{\text{Isom}\,}\def\endo{\text{End}}\def\sym{\text{Sym}}
\def\ov{\overline}\def\tl{\tilde}\def\wtl{\widetilde}\def\wh{\widehat}
\def\supp{\text{supp}}\def\rank{\text{rank}\,}\def\dom{\text{dom}}\def\reflec{\text{Reflec}}
\def\codim{\text{codim}\,}
\def\II{\mathscr I}
\def\NN{\mathscr N}
\def\BB{\mathscr B}
\def\FF{\mathcal F}\def\EE{\mathcal E}\def\BB{\mathcal B}%\def\AA{\mathcal A}
\def\CC{\mathcal C}\def\OO{\mathcal O}\def\JJ{\mathcal J}\def\HH{\mathcal H}
\def\RR{\mathcal R}\def\LL{\mathcal L}\def\PP{\mathcal P}\def\QQ{\mathcal Q}
\def\TT{\mathcal T}\def\DD{\mathcal D}
\def\KK{\mathcal K}
\def\AA{\mathscr A}
\def\gS{\goth{S}}\def\gg{\goth{g}}\def\BBB{\goth{B}}\def\XXX{\goth{X}}
\def\aa{\alpha}\def\ww{\omega}\def\bb{\beta}\def\ss{\sigma}
\def\vphi{\varphi}\def\wvphi{\widehat{\varphi}}\def\ll{\lambda}\def\ve{\varepsilon}
\def\Om{\Omega}
\def\wh{\widehat}\def\ch{\check}
\def\Z{\ams{Z}}\def\E{\ams{E}}\def\H{\ams{H}}\def\R{\ams{R}}
\def\C{\ams{C}}\def\Q{\ams{Q}}\def\F{\ams{F}}\def\K{\ams{K}}
\def\P{\ams{P}}\def\B{\ams{B}}\def\O{\ams{O}}\def\G{\ams{G}}
\def\M{\ams{M}}
\def\e{\mathbf{e}}
\def\w{\mathbf{w}}\def\u{\mathbf{u}}\def\x{\mathbf{x}}\def\y{\mathbf{y}}
\def\z{\mathbf{z}}\def\zhat{\hat{\mathbf{z}}}
\def\0{\mathbf{0}}
\def\1{\mathbf{1}}
\def\quo{/\kern -.45em\sim}
%
%\newpsobject{showgrid}{psgrid}{subgriddiv=1,griddots=5,gridlabels=6pt}
\newpsobject{showgrid}{psgrid}{subgriddiv=1,griddots=10,gridlabels=6pt,gridcolor=red}
%\newpsobject{showgrid}{psgrid}
%            {gridcolor=red,subgridcolor=red}
%
\def\ds{\displaystyle}
\def\Langle{\langle\kern -2pt\langle}
\def\Rangle{\rangle\kern -1.9pt\rangle}
\def\eg{{\emph{e.g.}}\,\,}
\def\ie{{\emph{i.e.}}\,\,}
\def\cf{{\emph{c.f.}}\,\,}
\def\yon{\text{Yon}}
\newcommand{\prsh}{\mathbf{PreSh}}
\newcommand{\op}{\text{op}}
\newcommand{\invlim}{\varprojlim}
\newcommand{\redun}{red\,}
\newcommand{\rmod}{\vrule width 0mm height 0 mm depth
  0mm_R\mathbf{Mod}}
\newcommand{\Symn}{S_{\kern-.3mm n}}
\newcommand{\Symr}{S_{\kern-.3mm r}}
%
%%%%%%%%%%%%%%%%%%%%%%%%%%%%%%%%%%%%%%%%%%%%%%%%%%%%%%%%%%%
%
%%%%%%%%%%%%%%%%%%%%%%%%%%%%%%%%%%%%%%%%%%%%%%%%%%%%%%%%%%%

\DeclareMathOperator{\Ker}{ker} 
\DeclareMathOperator{\im}{im} 
\DeclareMathOperator{\coker}{coker} 

\newcommand{\ra}{\rightarrow}
\newcommand{\rk}{rk}

\newcommand{\cork}[1]{|\kern0.75pt{#1}\kern1pt|}

\newcommand{\ZZ}{\mathbb{Z}}

\newcommand{\cN}{\mathcal{N}}

\newcommand{\ab}{\mathbf{Ab}}

\newcommand{\BP}{{\bf P}} 
\newcommand{\BPn}[1]{{\BP}^{{#1}}} 
\newcommand{\BPop}{{\bf P}^{op}} 
\newcommand{\BPg}[1]{{\bf P}_{>{#1}}} 
\newcommand{\BPge}[1]{{\bf P}_{\geq {#1}}} 
\newcommand{\BQ}{{\bf Q}}
\newcommand{\BQop}{{\bf Q}^{op}} 
\newcommand{\BR}{{\bf R}}  
\newcommand{\hs}{H\kern-0.5pt S}
\newcommand{\htt}{H\kern-0.5pt T}
\newcommand{\hc}{H\kern-0.5pt C}
\newcommand{\hst}{\widetilde{\hs}}
\newcommand{\blob}{\bullet}
\newcommand{\psibar}{\overline{\psi}}
\newcommand{\betabar}{\overline{\beta}}
\newcommand{\geo}[1]{|{{#1}}|}

\newcommand{\redH}[1]{\widetilde{H}_{{#1}}}
\newcommand{\HS}[1]{H_{{#1}}}
\newcommand{\HT}[1]{HT_{\kern-1pt {#1}}}

%%%%%%%%%%%%%%%%%%%%%%%%%%%%%%%%%%%%%%%%%%%%%%%%%%%%%%%%%%%

\title{Deletion-restriction for sheaf homology of graded atomic
  lattices} 

\dedication{Dedicated to Marcia Everitt (1932-2018) and Ken Turner (1927-2014)}

\author{Brent Everitt and Paul
  Turner \thanks{The second author was partially supported by NCCR SwissMAP}}

\institute{
{\sc Brent Everitt:} Department of Mathematics, University of York, York
YO10 5DD, United Kingdom. \email{brent.everitt@york.ac.uk}. 
{\sc Paul Turner:} Section de math\'ematiques,  
Universit\'e de Gen\`eve, 2-4 rue du Li\`evre, CH-1211, Geneva, Switzerland.
\email{paul.turner@unige.ch}.
}

\titlerunning{Deletion-restriction and lattices}
\authorrunning{Brent Everitt and Paul Turner}

\begin{document}

\maketitle

\begin{abstract}
We give a long exact sequence for the homology of a 
  graded atomic lattice
equipped with a sheaf of modules, in terms of the deleted and
restricted lattices. This is then used to compute the homology of the
arrangement lattice of a hyperplane arrangement equipped with the
natural sheaf. This generalises an old result of Lusztig.
\end{abstract}

\maketitle

%%%%%%%%%%%%%%%%%%%%%%%%%%%%%%%%%%%%%%%%%%%%%%%%%%%%%

\section*{Introduction}

This paper is about graded atomic lattices, equipped with
sheaves of modules, and their homology.
Important examples in nature are the face
lattices of polytopes, the intersection lattices of hyperplane
arrangements, and the lattices of flats of
matroids. This last family comprises precisely the geometric lattices. 
When studying
lattices a key role is played by deletion-restriction,
where the lattice $L$  may be decomposed 
into two pieces with
respect to some atom $a$: the deletion $L_a$ and the restriction
$L^a$. For example, the characteristic polynomial $\chi_L(t)$ of a
geometric lattice $L$ may be expressed in terms of the characteristic
polynomials of the deletion and restriction.

When $L$ is equipped with constant coefficients -- that is,
the sheaf is the constant sheaf -- then the  
homology reduces to the ordinary simplicial homology of
the order complex $|L|$ of $L$, and one can avail oneself of standard
topological tools. For example, an argument using a Mayer-Vietoris
sequence is enough to fully compute the homology of a
  geometric lattice \cites{Folkman66,
  Bjorner82}. The long exact sequence used in the calculation is
another manifestation of deletion-restriction, relating
the homology of $L$ with that of $L_a$ and $L^a$; see
\cite{Orlik-Terao92}*{\S4.5} for details.

If the sheaf is
non-constant then the topology of $|L|$ can play a relatively minor
role in homology -- the space $|L|$ can be contractible for example,
but 
the sheaf homology may be highly non-trivial. This makes the
calculation of homology for arbitrary sheaves less straightforward, and
the techniques used for constant coefficients do not simply
generalise. 

Nevertheless, for an arbitrary sheaf it turns out there is a
deletion-restriction long exact sequence, and this is the first main
result of the paper: 

\begin{maintheorem}
Let $L$ be a  graded atomic lattice equipped with a sheaf $F$. Then for any
atom $a\in L$ there is a long exact sequence 
$$
\begin{pspicture}(0,0)(14,1.75)
%\showgrid
\rput(0,-0.2){
\rput(6.9,1.5){$\cdots\ra
\HS { i}(L^a\kern-1pt\setminus \kern-1pt a; F) \ra
\HS {i}(L_a \kern-1pt\setminus \kern-1pt\0;F) \ra
\HS {i}(L \kern-1pt\setminus \kern-1pt\0;F)\ra
\HS { i-1}(L^a\kern-1pt\setminus \kern-1pt a; F) \ra
\HS {i-1}(L_a \kern-1pt\setminus \kern-1pt\0;F)
$}
\rput(7.2,0.55){$\cdots \ra
\HS {1}(L\kern-1pt\setminus \kern-1pt \0;F) \ra
\redH 0 (L^a\kern-1pt\setminus \kern-1pt a; F) \ra
\HS {0}(L_a\kern-1pt\setminus \kern-1pt \0;F) \ra
\HS { 0}(L\kern-1pt\setminus \kern-1pt \0; F) \ra \coker(\epsilon_*) \ra 0$}
\rput(-0.2,0){\psarc[linewidth=0.625pt](14,1.25){0.25}{270}{90}}
\rput(0,0){\psline[linewidth=0.625pt](13.8,1)(0.15,1)}
\rput(-13.85,-0.4745){\psarc[linewidth=0.625pt](14,1.225){0.25}{90}{270}}
%\rput(0.31,0.4940){$\ra$}
\rput(0.31,0.4927){$\ra$}
}
\end{pspicture}
$$
where $\epsilon_*\colon \HS 0 (L^a\kern-1pt\setminus\kern-1pt a;F) =
\varinjlim^{L^a\setminus a}\kern-1pt F \ra F(a)$ is
the map induced by the $F^x_{a}\colon F(x) \ra
F(a)$, for $x\geq a$, and the universality of the colimit.
\end{maintheorem}

Each lattice has had its minimum element $\0$ removed, a necessary
requirement for a lattice when considering its sheaf
homology. If minima are not removed then, for general reasons, the
homology will be concentrated in degree zero. When the coefficients
are constant, both the minimum and the maximum $\1$ have to be removed to
avoid the homology completely collapsing. When the sheaf is
non-constant 
there is no \emph{a priori\/} reason to remove the
maximum. We also warn the reader that at the generality
  of graded atomic lattices $L$, the restriction $L^a$ is not itself
  atomic. To make inductive arguments therefore, one must start with
  an $L$ carrying more structure: for example the face lattice of a
  polytope or a geometric lattice -- see
  \S\S\ref{section1:subsection1}-\ref{section1:subsection2}.

In the case of a linear hyperplane arrangement, the associated
arrangement lattice has elements the intersections
of hyperplanes. As these intersections are again linear spaces this
gives rise to a canonical sheaf on the lattice of intersections. We
refer to this as the {\em natural sheaf}. Our main
application of the deletion-restriction long exact sequence
gives a complete calculation of the reduced homology in this case:

\begin{mainapplication}
Let $L$ be the intersection lattice of a hyperplane
arrangement with $\rk(L)\geq 2$ and let $F$ be the natural
  sheaf on $L$. Then $\redH i (L\kern-1pt\setminus\kern-1pt\0;F)$ is
  trivial when $i\neq \rk(L)-2$ and  
  $$ 
\dim \redH {\rk(L)-2} (L\kern-1pt\setminus\kern-1pt\0;F) = 
(-1)^{\rk(L)-1}\frac{d}{dt}\,\chi(t)\,{\vrule width 0.5pt height 4
   mm depth 2mm}_{\,\,t=1}
$$
where $\chi(t)$ is the characteristic polynomial of $L$.
\end{mainapplication}

The quantity appearing on the right-hand side is known (in the more
general context of matroids) as the beta-invariant (see
\cite{MR921071}*{\S 7.3}).
We note that Yuzvinsky \cite{Yuzvinsky91} formulated the notion of a local sheaf
to generate similar vanishing homology results, but these
ideas are not readily applicable to the situation  
above. 

Our original motivation was a 
result of Lusztig \cite{Lusztig74}*{Theorem 1.12}, where he
proved that if $V$ is a space
over a finite field, $A$ is the hyperplane arrangement consisting of
\emph{all\/} the 
hyperplanes in $V$, and $F$ is the natural sheaf, then $\HS
{i}(L\kern-1pt\setminus\kern-1pt\0,\1;F)$ vanishes in degrees 
$0<i<\rk(L)-2$. Lusztig's interest in natural sheaves on arrangement lattices arose in
his study of the discrete series representations of $GL_nk$ for
$k$ a finite field.  As a corollary to our main application we extend
Lusztig's result to any arrangement:  

\begin{corollarynonumber}
Let $L$ be the intersection lattice of a hyperplane
arrangement $A$ in the vector space $V$ and let 
$U=\bigcap_{a\in A} a$. Suppose that
$\rk(L)\geq 3$  and let $F$ be the natural sheaf on $L$. 
Then $\HS i (L\kern-1pt\setminus\kern-1pt\0, \1;F)$ vanishes in degrees
$0<i<\rk(L)-2$ with $\HS 0 (L\kern-1pt\setminus\kern-1pt\0,\1;F)\cong
V \oplus U$ and
 $$
\dim \HS {\rk(L)-2} (L\kern-1pt\setminus\kern-1pt\0,\1;F) = 
(-1)^{\rk(L)-1}\frac{d}{dt}\,\chi(t)\,{\vrule width 0.5pt height 4
   mm depth 2mm}_{\,\,t=1} + |\mu(\0, \1)|\dim U,
$$
where $\mu$ is the M\"{o}bius function of $L$.
\end{corollarynonumber}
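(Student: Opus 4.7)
The plan is to deduce the corollary from the Main Application, combining it with Folkman's theorem on the reduced constant-coefficient homology of a geometric lattice --- applicable since the intersection lattice of a hyperplane arrangement is geometric. The main tool is a short exact sequence of chain complexes relating $L\setminus\0$ with the open interval $L\setminus\{\0,\1\}$, which I take as the meaning of ``$L\setminus\0,\1$''.

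First I would set up the short exact sequence
$$0 \to C_*(L\setminus\{\0,\1\}; F) \to C_*(L\setminus\0; F) \to Q_* \to 0,$$
where $Q_*$ is the quotient spanned by those chains of $L\setminus\0$ that terminate at the top element $\1$. Since $\1$ is the lattice maximum and the sheaf coefficient sits at the top of each chain, any such chain $(x_0 < \cdots < x_{n-1} < \1)$ carries the stalk $F(\1) = U$, and grouping by initial subchain identifies $Q_n \cong U \otimes \widetilde{C}_{n-1}(L\setminus\{\0,\1\}; k)$ with $\widetilde{C}_*$ the augmented constant-coefficient chain complex. Folkman's theorem then gives $\redH{i}(L\setminus\{\0,\1\}; k) = 0$ for $i \neq \rk(L)-2$ and $\dim\redH{\rk(L)-2}(L\setminus\{\0,\1\}; k) = |\mu(\0,\1)|$, so $H_*(Q_*)$ is concentrated in degree $\rk(L)-1$ with dimension $|\mu(\0,\1)|\dim U$.

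The associated long exact sequence
$$\cdots \to \HS{n}(L\setminus\{\0,\1\}; F) \to \HS{n}(L\setminus\0; F) \to H_n(Q_*) \to \HS{n-1}(L\setminus\{\0,\1\}; F) \to \cdots,$$
combined with the Main Application (which places $\redH{i}(L\setminus\0; F)$ in degree $\rk(L)-2$ with dimension $|\chi'(1)|$), delivers most of the corollary. In middle degrees $0<i<\rk(L)-2$ the two outer terms vanish, forcing $\HS{i}(L\setminus\0,\1; F) = 0$. In the top degree the sequence collapses to
$$0 \to H_{\rk(L)-1}(Q_*) \to \HS{\rk(L)-2}(L\setminus\0,\1; F) \to \redH{\rk(L)-2}(L\setminus\0; F) \to 0,$$
whose outer dimensions $|\mu(\0,\1)|\dim U$ and $|\chi'(1)|$ sum to the stated formula.

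The hardest step will be pinning down the degree-zero identification $\HS{0}(L\setminus\0,\1; F) \cong V \oplus U$. The LES connects this group to $\HS{0}(L\setminus\0; F)$, which equals the ambient space $V$ realised as the colimit of the natural sheaf via the inclusions $F(x) = x \hookrightarrow V$, together with the connecting homomorphism out of $H_1(Q_*)$. Untangling the direct-sum decomposition requires tracking two separate contributions of the natural sheaf at the lattice extremes: the ambient $V = F(\0)$ captured by the colimit, and the fibre $U = F(\1)$ that enters independently once $\1$ is removed. Verifying that these assemble into a split extension $V \oplus U$, rather than into some smaller amalgam, is the technical core of the degree-zero argument.
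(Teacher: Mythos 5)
Your overall mechanism is the same as the paper's: set up a short exact sequence of chain complexes $0 \to C_*(L\setminus\{\0,\1\};F) \to C_*(L\setminus\0;F) \to Q_* \to 0$ (the paper presents this as a short exact sequence of sheaves $0\to F'\to F\to G\to 0$ on $L\setminus\0$, but the chain-level quotient is identical), compute the homology of the quotient using Folkman's theorem, and then run the long exact sequence against the Main Application. Your identification $Q_n\cong U\otimes\widetilde{C}_{n-1}(L\setminus\{\0,\1\};k)$ with the \emph{augmented} constant-coefficient complex is the precise one, and from it one indeed gets $H_n(Q_*)\cong U\otimes\widetilde{H}_{n-1}(L\setminus\{\0,\1\};k)$, concentrated in degree $\rk(L)-1$.

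This is exactly where the proposal develops a genuine gap, and your own correct computation is what exposes it. Since $\rk(L)\geq 3$ you have $\rk(L)-1\geq 2$, so in particular $H_0(Q_*)=H_1(Q_*)=0$. The bottom of your long exact sequence therefore reads $0=H_1(Q_*)\to \HS{0}(L\setminus\0,\1;F)\to \HS{0}(L\setminus\0;F)\to H_0(Q_*)=0$, which forces $\HS{0}(L\setminus\0,\1;F)\cong\HS{0}(L\setminus\0;F)\cong V$, with no room for an extra $U$ summand: the connecting homomorphism you appeal to is a map out of the zero group. The ``hardest step'' paragraph in your sketch simply cannot be carried out as described. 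Note that the paper's published proof manufactures the extra $U$ by computing the homology of its quotient ($\HS{i}(L\setminus\0;G)$) as the \emph{unreduced} constant-coefficient homology $\HS{i-1}(L\setminus\0,\1;\Delta U)$, which contributes $U$ in degree $1$; your augmented identification of $Q_*$ instead yields reduced homology and kills precisely that class. You need to resolve this tension — either your identification of $Q_*$ is off by an augmentation term, or the stated degree-zero answer needs revisiting — before the degree-zero claim can be justified. A direct hand computation on a small non-essential example (say the rank-$3$ coordinate Boolean in $k^4$) would settle which way the discrepancy falls.
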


We note that  while our calculations involving hyperplane arrangements have
homology vanishing in all but top degree, this behaviour is the
exception rather than the rule. One can readily find 
lattices and sheaves whose homology is highly non-trivial. One example
is the 
the Khovanov homology of a link diagram \cite{Khovanov00} which may be
interpreted in terms of sheaf homology  
 (see \cites{EverittTurner15, 
  EverittTurner14}). In this case there are many non-vanishing
intermediate degrees, despite the underlying lattice being 
 contractible. Even when the sheaf structure maps are all injections
 one easily finds non-trivial homology in intermediate degrees. A
 natural example is in the context of ``sheaves on 
buildings''. Indeed,  Lusztig's  result can be viewed as the case of
the building of $GL_n$ equipped with the fixed point 
sheaf of the natural representation, for which the structure maps  are
all inclusions. There are similar situations --  the building of 
$S\kern-2pt p_n$ for example -- where the homology is non-vanishing in some
intermediate degrees (see \cite{Ronan_Smith85}). 

The paper is organised as follows. In Section  \ref{section1} we set down the
basics on lattices, and in particular discuss the notion of a dependent atom,
that will play a key role in inductive arguments. In
Section \ref{section2} we remind the reader about the basics of sheaf
homology on posets -- 
both unreduced and reduced. We also present the Leray-Serre spectral
sequence arising from a 
poset map, which plays a key role. In Section \ref{section3} we
present a deletion-restriction long exact sequence for arbitrary
sheaves (Theorem  \ref{section3:theorem:deletion.restriction}) and
also give a version using reduced homology  (Corollary
\ref{section3:corollary:deletion.restriction}). In Section
\ref{section:application} we calculate, as an application, the sheaf homology of a
hyperplane arrangement equipped with the natural sheaf (Theorem
\ref{section:application:theorem1}) and put this in a form
which makes direct comparison to Lusztig's result (Theorem
\ref{section:application:generalisation}). We end with a few remarks 
about the reduced broken circuit complex, whose homology also features
the beta-invariant.

We are grateful to the referee for pointing out to us
  the literature concerning broken circuits and the beta-invariant.

%%%%%%%%%%%%%%%%%%%%%%%%%%%%%%%%%%%%%%%%%%%%%%%%%%%%% 

\section{Lattices}
\label{section1}

In \S\S\ref {section1:subsection1}-\ref{section1:subsection2} we
recall basic facts about posets, lattices, geometric 
lattices and arrangement lattices. 
Standard references for this material are
\cite{Birkhoff79,Stanley12,MR2383131, Orlik-Terao92}. In
\S\ref{section1:subsection3} we set down facts about dependent atoms from
\cite{Everitt-Fountain13} 
that will be useful in the inductive arguments of \S\ref{section:application}.

\subsection{Basics}
\label{section1:subsection1}

Let $P=(P,\leq) $ be a finite poset. 
If $x\leq y\in P$ and for any
$x\leq z\leq y$ we have either $z=x$ or $z=y$, then 
$y$ is said to cover $x$, and we write $x\prec y$. 
$P$ is
\emph{graded\/} if there exists a function $\rk:P\ra\Z$
such that (i) $x<y$ implies $\rk(x)<\rk(y)$, and (ii) $x\prec y$
implies $\rk(y)=\rk(x)+1$. A minimum is an element $\0\in P$ such that
$\0\leq x$ for all $x\in P$ 
and a maximum is an element $\1\in P$ such that $x\leq\1$ for all $x\in
P$. 
If $P$ has a 
minimum $\0$, then the standard grading on $P$ is defined by taking $\rk(x)$ to be the
supremum of the lengths of all poset chains from $\0$ to $x$. All the
posets in this paper will be graded with the standard grading. The
elements covering $\0$ -- those of rank $1$ -- are called \emph{atoms\/}. A
poset map $f:Q\ra P$ is a set map such that $fx\leq fy\in P$ if $x\leq
y\in Q$. 

A subset $K\subset P$ is \emph{upper convex\/} if $x\in K$ and $x\leq
y$ implies that $y\in K$. If $x\leq y$, the \emph{interval\/} $[x,y]$ consists of those
$z\in P$ such that $ x\leq z \leq y$; if $x\in P$ the interval
$P_{\geq x}$ consists of those $z\in P$ such that $z\geq x$; one
defines $P_{\leq x}$, $P_{> x}$ and $P_{< x}$ similarly. 

A lattice  is a poset such that any two elements
$x$ and $y$ have a unique supremum (or \emph{join\/}) $x\vee y$ and a
unique infimum (or 
\emph{meet\/}) $x\wedge y$.  
A finite lattice has minimum $\0$ equal to the meet of all
its elements and maximum $\1$ equal to the join of all
its elements.
A graded lattice is \emph{atomic\/} if
every element can be
expressed -- not necessarily uniquely -- as a join of atoms, and with the
empty join taken to be $\0$. 
The rank,
$\rk(L)$, of a graded lattice $L$
is $\rk(L):=\rk(\1)$. 

Examples of graded atomic lattices abound:
\begin{itemize}
\item If $A$ is a (finite) set then the free, or \emph{Boolean\/}, lattice $B=B(A)$
has elements the subsets of $A$ ordered by
inclusion. It is a graded atomic lattice with
$\rk(x)=|x\kern0.25mm|$, $\rk(B)=|A|$, join $x\vee y=x\cup
y$, meet $x\wedge y=x\cap y$, minimum $\0=\varnothing$, maximum $\1=A$
and atoms the singletons -- which we 
identify with $A$. Any element has
a unique expression as a join of atoms.
 \item
A (convex) polytope $P$ in a real vector space $V$ is the convex hull
of a finite set of points -- see
\cites{Grunbaum03,Ziegler95}. The \emph{face lattice\/}
  $\FF(P)$ has elements the faces of $P$ ordered by reverse
inclusion. This is a graded a atomic lattice (graded by $\rk f=\dim P-\dim f$)  with
atoms the $\dim P-1$ faces, join $f_1\vee f_2=f_1\cap f_2$, meet $f_1\wedge f_2$ the smallest face
containing $f_1$ and $f_2$, minimum
$P$ and maximum $\varnothing$
(hence $\rk\FF(P)=\dim P$). 
\item The {\em partition lattice} $\Pi=\Pi(A)$ on the set $A$ consists of all
  partitions $\{X_1,X_2\ldots,X_n\}$ of $A$ 
  ordered by refinement:
  $\{X_1,X_2\ldots,X_n\}\leq\{Y_1,Y_2,\ldots,Y_m\}$ if each $X_i$ is
  contained in some $Y_j$. The result is a graded lattice with
  $\rk \{X_1,X_2\ldots,X_n\}=\sum(|X_i|-1)$; $\rk(\Pi)=|A|-1$,
  minimum the partition with all blocks singletons, maximum $\{A\}$, and
  atoms the partitions with just one block $\{a,b\}$ not having size
  one. 
\item The intersections of a collection of hyperplanes ordered by
  reverse inclusion gives an {\em arrangement lattice} -- see  
  \S\ref{section1:subsection2}.
\end{itemize}

If $L$ is graded atomic with atoms $A$, then for $a\in A$, define the
\emph{deletion\/} lattice
$L_a$ to be the elements of $L$ that can be expressed as a join
of the elements of $A\setminus\{a\}$ (with the empty join taken to be
$\0$), and the \emph{restriction\/} lattice $L^a$ to be the interval
$L_{\geq a}$.
The deletion $L_a$ is graded atomic with atoms $A_a=A\setminus\{a\}$,
minimum $\0_a=\0$, maximum $\1_a=\bigvee A_a$ and rank function
$\rk_a=\rk$. The restriction $L^a$ is graded but not in general
atomic; if however $L$ is the face lattice of a polytope
\cite{Ziegler95}*{Theorem 2.7} or an arrangement lattice (which
includes the Boolean and partition examples) then $L^a$ \emph{is\/} atomic. 
% {\color{red}Both $L_a$ and $L^a$ are graded atomic lattices with
% (respectively) atoms $A_a=A\setminus\{a\}$ and $A^a=\{a\vee b:b\in A_a\}$,
% minima
%   $\0_a=\0$ and $\0^a=a$; maxima $\1_a=\bigvee
%   A_a$ and $\1^a=\1$; and rank functions $\rk_a=\rk$ and $\rk^a=\rk-1$.}

We finish our review of the basics with an important object in the theory of
enumeration. If $k$ is a field, then the \emph{M\"{o}bius function}
$\mu=\mu_L$ of $L$  is the $k$-valued function on the intervals $[x,y]$ defined by 
$$
\mu(x,y)=-\sum_{x\leq z<y} \mu(x,z),\text { for all }x<y\text{ in }L
$$
and $\mu(x,x)=1$ for all $x\in L$.

\subsection{Geometric and arrangement lattices}
\label{section1:subsection2}

A graded atomic lattice is \emph{geometric\/} if the rank function satisfies
\begin{equation}
  \label{eq:7}
\rk(x\vee y)+\rk(x\wedge y)\leq \rk(x)+\rk(y)  
\end{equation}
for all $x$ and $y$. The Boolean, partition and
  arrangement lattices above are geometric; the face lattices of
  polytopes are usually not.
% If $L$ is geometric with atoms $A$, then for $a\in A$ define
% $A_a:=A\setminus\{a\}$ and $A^a:=\{a\vee b:b\in A_a\}$.
% The
% \emph{restriction\/} lattice $L^a$ consists of the elements of $L$
% that can be expressed as $a\vee x$ for some $x\in L$; that is, 
% $L^a=\{a\vee x:x\in L\}$. This in turn is equal to the interval
% $L_{\geq a}=\{x\in L:x\geq a\}$.
If $L$ is geometric then for a given atom $a$, both $L_a$ and $L^a$
are again geometric lattices.
The restriction $L^a$ has atoms $A^a=\{a\vee b: b\in A_a\}$, minimum
$\0^a=a$, maximum $\1^a=\1$ and rank function $\rk^a=\rk-1$. 
% with
%(respectively) atoms $A_a=A\setminus\{a\}$ and $A^a=\{a\vee b:b\in A_a\}$,
%minima
 % $\0_a=\0$ and $\0^a=a$; maxima $\1_a=\bigvee
 % A_a$ and $\1^a=\1$; and rank functions $\rk_a=\rk$ and $\rk^a=\rk-1$.

Our main supply of geometric lattices will come from (linear) hyperplane
arrangements.
Let $V$ be a finite dimensional vector space over a field $k$; 
then an \emph{arrangement\/} in $V$ is a finite set
$A=\{a_i\}$ of linear hyperplanes in $V$. 
The \emph{arrangement lattice\/} $L=L(A)$
has elements all possible intersections of hyperplanes
in $A$ -- with the empty intersection taken to be $V$ -- and is ordered by
\emph{reverse\/} inclusion. Then $L$ is a geometric lattice with atoms
the hyperplanes $A$, and
$$
\0=V, \quad\1=\bigcap_{a\in A} a, \quad \rk(x)=\codim x, \quad
x\vee y=x\cap y,\text{ and }x\wedge y=\bigcap\{z\in L:x\cup y\subseteq z\}
$$
Given $a\in A$,
the deletion lattice $L_a$ is the arrangement lattice $L(A_a)$, and
similarly the restriction lattice $L^a$ is the arrangement lattice
$L(A^a)$.

Some examples:
\begin{itemize}
\item Let $v_1,v_2,\ldots,v_n$ be a basis for $V$ with corresponding
  coordinate functions $x_1,x_2,\ldots,x_n$. The \emph{coordinate
    arrangement} $A$ consists of the hyperplanes having equations $x_i=0$,
  for $1\leq i\leq n$. The arrangement lattice $L(A)$ is isomorphic to
  the Boolean lattice $B(A)$ via the map $x\in B(A)\mapsto
  \bigcap_{i\in x}\{x_i=0\}\in L(A)$.
\item The symmetric group $\Symn$ acts on $V$ by permuting basis
  vectors: $\pi\cdot v_i=v_{\pi\cdot i}$ for $\pi\in\Symn$. This
  realises $\Symn$ as a reflection group where the reflecting
  hyperplanes are those with equations $x_i-x_j=0$ for all $1\leq
  i\not=j\leq n$. Collectively they form the \emph{braid
    arrangement\/} $A$ -- so called, when $k=\C$, as the space $V\setminus
  \bigcup_{a\in A} a$ has fundamental group the (pure) braid group on
  $n$ strands.  The arrangement lattice $L(A)$ is isomorphic to the
  partition lattice $\Pi(A)$ via the map induced by $x_i-x_j=0\in
  L(A)$ maps to the partition with just one block $\{i,j\}$ not having size
  one.
\item More generally, if $W\subset GL(V)$ is any finite reflection group, then the
  reflecting hyperplanes of $W$ form a \emph{reflectional arrangement\/}.
\end{itemize}

\begin{figure}
  \centering
\begin{pspicture}(0,0)(14,4)
%\showgrid
\rput(2,2){\BoxedEPSF{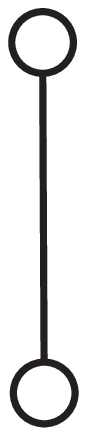 scaled 325}}
\rput(4.5,2){\BoxedEPSF{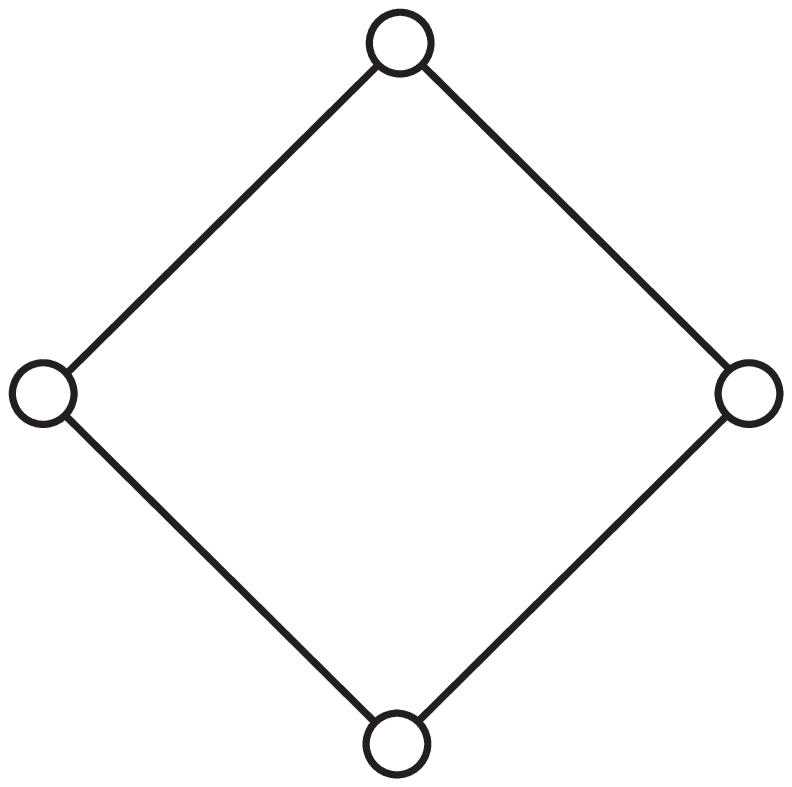 scaled 325}}
\rput(8,2){\BoxedEPSF{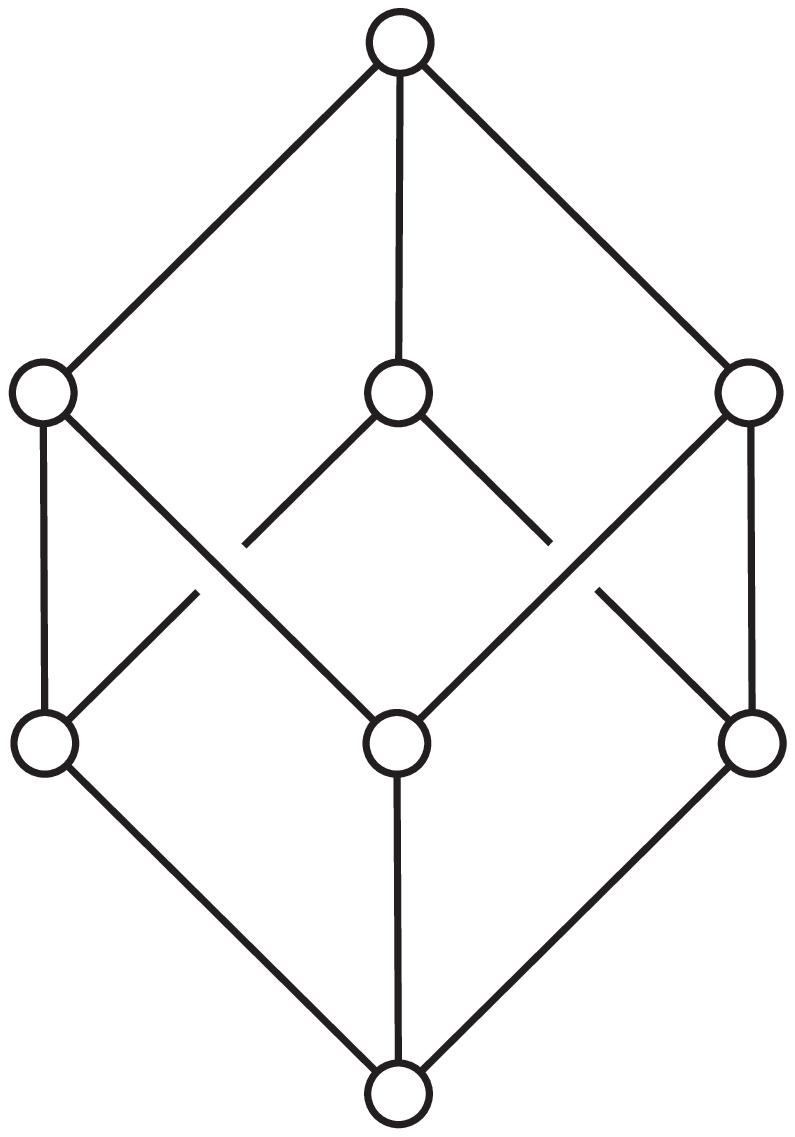 scaled 325}}
\rput(11.5,2){\BoxedEPSF{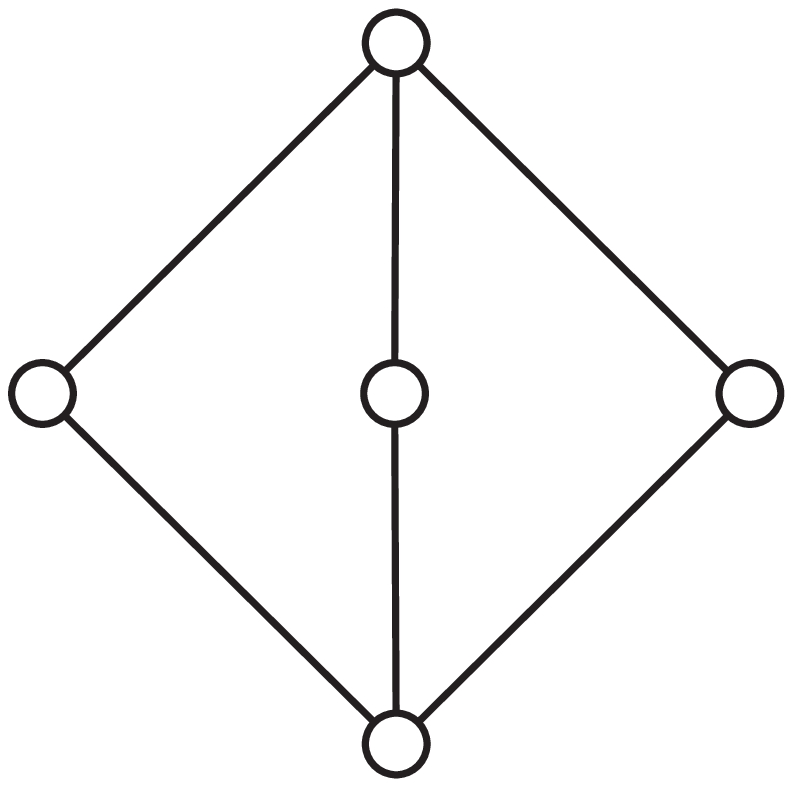 scaled 325}}
\end{pspicture}
\caption{The arrangement lattices $L(A)$ where $|A|\leq 3$: the
  Booleans $B(n)$ for $n=1,2,3$ and the partition lattice $\Pi(3)$.}
  \label{fig:arrangement:small}
\end{figure}

When $|A|=1$ or $2$, the only possibility for $L$ is that it be
Boolean of rank $|A|$. The arrangement lattices with 3 or fewer
hyperplanes are shown in 
Figure \ref{fig:arrangement:small}. 
The first three are Boolean and the last is the partition
lattice of a 3-element set. 
An arrangement lattice of rank 2 has the form shown in Figure
\ref{fig:arrangement:rank2}.

An arrangement $A$ in the space $V$ is
\emph{essential\/} when $\rk(L)=\dim V$, or equivalently,
$\bigcap_{a\in A} a$ is the zero space. 
The \emph{characteristic polynomial\/} $\chi=\chi_A$ of the arrangement $A$
 is defined by 
$$
\chi(t)=\sum_{x\in L} \mu(x)t^{\dim x}
$$
where $\mu(x)$ is the value of the M\"{o}bius function of the
associated arrangement lattice $L$ on the 
interval $[\0,x]$, i.e. $\mu(x) = \mu(\0, x)$. 

\subsection{Dependence}
\label{section1:subsection3}

There is a notion of independence in a lattice that mimics linear
algebra. Let $L$ be a graded atomic lattice with atoms $A$ and write $\bigvee
S$ for the join of the elements in a subset $S\subseteq A$. 
A set
$S\subset A$ of atoms is
\emph{independent\/} if $\bigvee T<\bigvee S$ for all proper subsets
$T$ of $S$, and 
dependent otherwise. An atom $a$ in a dependent set of atoms $S$ with
the property that $\bigvee S\setminus\{a\}=\bigvee S$ is called a
\emph{dependent atom\/}. It is easy to show
\cite{Everitt-Fountain13}*{\S 1.1} that if $S$ is dependent then there
is an 
independent $T\subset S$ with $\bigvee T=\bigvee S$, and that any
subset of an independent set is independent. 

\begin{proposition}\label{lemma:independent_is_boolean}
Let $L$ be a graded atomic lattice with independent atoms $A$. Then
$L$ is isomorphic to the Boolean lattice $B(A)$. 
\end{proposition}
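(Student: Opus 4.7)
The plan is to construct the obvious candidate map $\phi\colon B(A)\to L$ defined by $\phi(S)=\bigvee S$ for each $S\subseteq A$ (with $\phi(\varnothing)=\0$), and show that it is a lattice isomorphism. That $\phi$ is a poset map is immediate from the definition of join: $S\subseteq T$ forces $\bigvee S\leq \bigvee T$. Surjectivity is immediate from atomicity of $L$: every element of $L$ is a join of atoms.

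The crux is showing that $\phi$ is order-reflecting, i.e.\ $\bigvee S\leq \bigvee T$ implies $S\subseteq T$; this is exactly where the independence hypothesis enters. I would argue by contradiction: suppose there is an $a\in S\setminus T$. Then $a\leq \bigvee S\leq \bigvee T$, so $\bigvee(T\cup\{a\})=a\vee \bigvee T=\bigvee T$. But $T\cup\{a\}\subseteq A$ is independent, using the quoted fact from \cite{Everitt-Fountain13} that any subset of an independent set is independent, and $T$ is a \emph{proper} subset of $T\cup\{a\}$, so the definition of independence gives $\bigvee T<\bigvee(T\cup\{a\})=\bigvee T$, a contradiction.

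Once $\phi$ is order-preserving, order-reflecting and surjective, it is automatically a bijection — if $\phi(S)=\phi(T)$ then the two reverse inequalities give $S\subseteq T$ and $T\subseteq S$ — and hence a poset isomorphism. Poset isomorphisms of lattices are automatically lattice isomorphisms, since meets and joins are characterised by the order. This completes the proof.

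I do not expect a serious obstacle here; the only delicate point is to remember that independence is hereditary on subsets, so that after adding a putative extra atom $a$ to $T$ one may still invoke the definition of independence on $T\cup\{a\}$. The proof therefore reduces essentially to extracting the correct contradiction from the single definitional inequality $\bigvee T<\bigvee(T\cup\{a\})$.
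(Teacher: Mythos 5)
Your proof is correct and follows essentially the same strategy as the paper: construct the canonical join map from $B(A)$ to $L$, observe that it is order-preserving and surjective, and then derive order-reflection (hence bijectivity and poset isomorphism) by contradiction from independence. The only difference in execution is where the contradiction is located: you work locally, invoking the hereditary property of independence on $T\cup\{a\}$ to get $\bigvee T < \bigvee(T\cup\{a\}) = \bigvee T$, whereas the paper's proof instead inflates both sides of the equality $x\vee y = y$ by joining with all remaining atoms in order to contradict independence of the full set $A$ directly --- your route is a little cleaner and avoids the paper's bookkeeping with "removing redundancies."
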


Birkhoff \cite[IV.4, Theorem 5]{Birkhoff79} proves this for $L$ a
geometric lattice.

\begin{proof}
In $B=B(A)$ any element has a unique expression as a join of
atoms. Since $B$ and $L$ share the same set of atoms and each element
in $L$ may be written as a join of atoms, there is a canonical
surjection $f:B\ra L$ given by  
$$
\textstyle{f:\bigvee_{\kern-0.5mm B}
a_i\mapsto \bigvee_{\kern-0.5mm L}  a_i}.
$$

We show that $f$
 is injective and that $f^{-1}$ is a poset map, hence $f$ is an
 isomorphism. Both follow from the 
 following claim: if $x,y\in L$ and $x=a_1\vee\cdots\vee a_k$,
 $y=a'_1\vee\cdots\vee a'_\ell$ are any expressions as joins of atoms, then
 $x\leq y$ if and only if
 $\{a_1,\ldots,a_k\}\subseteq\{a'_1,\ldots,a'_\ell\}$. To prove the
 ``only if''' part of the claim, let $x\leq y$ and suppose that 
$a_i\not\in\{a'_1,\ldots,a'_\ell\}$ for some $i$. Then
$$
a_1\vee\cdots\vee a_k\vee a'_1\vee\cdots a'_\ell
=x\vee y
=y
=a'_1\vee\cdots\vee a'_\ell,
$$
and after removing redundancies on the left (as joins commute and
$a\vee a=a$ for all $a$) the right 
hand join of atoms is a proper subset of the left hand join of
atoms. Taking the join of both sides with those atoms that are not
any of the $a_j$ or $a'_j$ gives $\bigvee A$ on the left and
$\bigvee A'$ on the right, for some $A'$ a proper subset of $A$. This contradicts the
independence of $A$. The
``if'' part of the claim is obvious.

Now let $S=\{s_i\}$ and $T=\{t_i\}$ be subsets of atoms with $\bigvee
fs_i=\bigvee ft_i$. Then by the claim we have $\{fs_i\}=\{ft_i\}$ and
hence $S=T$ as $f$ is the identity map on $A$. Thus, $f$ is injective.
A similar argument shows that $f^{-1}$ is a poset
map.
\qed
\end{proof}

\begin{figure}
  \centering
\begin{pspicture}(0,0)(12,3)
%\showgrid
\rput(6,1.5){\BoxedEPSF{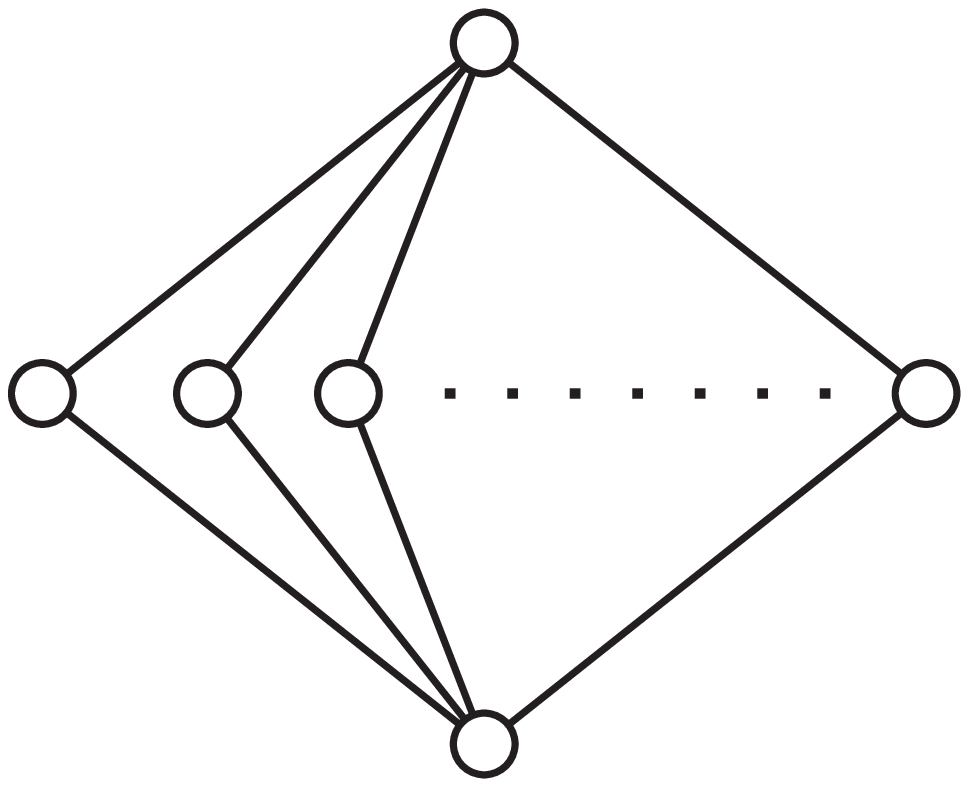 scaled 325}}
\end{pspicture}
\caption{An arrangement lattice of rank $2$.}
  \label{fig:arrangement:rank2}
\end{figure}

In a geometric lattice $L$ with atom set $A$ we have $\rk L =
\rk(\bigvee A)\leq |A|$. Moreover, 
$A$ is  
independent if and only if $\rk\bigvee A=|A|$, so the above shows that 
%; see \cite[\S1.3]{Everitt-Fountain13}. 
 $\rk(L)= |A|$ if and only if $L$ is Boolean.
%\end{corollary}

 \begin{corollary}
 \label{cor:independent_is_boolean}
Let $L$ be a non-Boolean geometric lattice with $|A|$
hyperplanes. Then there exists a dependant atom $a$ such that 
\begin{description}
\item[--] the deletion $L_a$ has $|A|-1$ atoms and $\rk L_a = \rk L$;
\item[--] the restriction $L^a$ has at most $|A|-1$ atoms and $\rk L^a = \rk L -1$.
\end{description}
\end{corollary}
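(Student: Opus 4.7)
The plan is to extract the dependent atom from Proposition \ref{lemma:independent_is_boolean} (used contrapositively) and then simply read the three bulleted claims off from the definitions of $L_a$ and $L^a$ recorded in \S\ref{section1:subsection2}.

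First, I would show that since $L$ is not Boolean, the full atom set $A$ itself must be dependent. This is precisely the contrapositive of Proposition \ref{lemma:independent_is_boolean}: an independent $A$ would force $L\cong B(A)$. Having $A$ dependent, I would invoke the fact recorded in \S\ref{section1:subsection3} that every dependent set contains an independent subset $T$ with $\bigvee T = \bigvee A$. Any choice of $a\in A\setminus T$ then satisfies $T\subseteq A\setminus\{a\}$, so that
\[
\textstyle\bigvee (A\setminus\{a\}) \;\geq\; \bigvee T \;=\; \bigvee A \;=\; \1,
\]
and hence $a$ is a dependent atom.

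With such an $a$ in hand, the claims for $L_a$ are immediate from \S\ref{section1:subsection2}: the deletion $L_a$ has atom set $A_a = A\setminus\{a\}$, giving $|A|-1$ atoms; its maximum is $\1_a = \bigvee A_a$, which equals $\1$ by the dependent atom property; and since $\rk_a = \rk$ one obtains $\rk L_a = \rk(\1) = \rk L$. For the restriction, \S\ref{section1:subsection2} records that $L^a$ has atom set $A^a = \{a\vee b : b\in A_a\}$, a set of cardinality at most $|A_a| = |A|-1$; the maximum of $L^a$ is $\1$ and the rank function is $\rk^a = \rk - 1$, so $\rk L^a = \rk(\1)-1 = \rk L - 1$.

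The only substantive step is the first one -- producing the dependent atom -- and it is really just a packaging of Proposition \ref{lemma:independent_is_boolean}. The remainder is bookkeeping from the preceding subsections; in particular, no further use of the semimodular inequality \eqref{eq:7} is needed beyond what has already been invoked to establish that $L_a$ and $L^a$ are themselves geometric.
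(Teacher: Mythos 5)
Your proof is correct and follows the same line of reasoning the paper leaves implicit (the paper states the corollary without a separate proof, relying on Proposition \ref{lemma:independent_is_boolean} and the preceding remark that $\rk L = |A|$ iff $L$ is Boolean, together with the facts about dependent sets recalled in \S\ref{section1:subsection3}). Your packaging — contrapositive of the Proposition to see $A$ is dependent, extract a proper independent $T$ with $\bigvee T = \bigvee A$, take any $a \in A \setminus T$, and read off the atom counts and ranks from the formulas $A_a = A\setminus\{a\}$, $\rk_a=\rk$, $A^a=\{a\vee b : b\in A_a\}$, $\rk^a = \rk-1$ — is exactly what the authors intend.
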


%%%%%%%%%%%%%%%%%%%%%%%%%%%%%%%%%%%%%%%%%%%%%%%%%%%%%

\section{Sheaf homology}
\label{section2}

In \S\S\ref{section2:subsection1}-\ref{section2:subsection2} we recall
the basics of sheaves on posets and the 
resulting homologies -- standard references are
\cite{Gabriel_Zisman67,Godement73,Quillen78,Quillen73}. 
In \S\ref{section2:subsection3} we recall a convenient tool for
calculating homology:  a Leray-Serre spectral sequence
for which we reference \cite{Gabriel_Zisman67}*{Appendix II}.  
In \S\ref{section2:subsection4} we recall the notion of reduced homology. 

\subsection{Sheaves}
\label{section2:subsection1}

Let
$R$ be a commutative ring with $1$. A 
\emph{sheaf\/}\footnote{Strictly speaking we should say presheaf
  rather than sheaf, but as our posets are discrete (if one wishes to
  view them as topological objects) there is essentially no difference between
  presheaves and sheaves.} on a poset $P$ is a \emph{contra\/}variant
functor   
$$
F:P\rightarrow\rmod
$$
to the category of $R$-modules, where $P$ is interpreted as a
category in the usual way. The category
of sheaves on $P$ has objects the sheaves
$F$ and morphisms the natural
transformations of functors $\kappa:F\rightarrow G$. We write 
$F^y_x$ for the homomorphism, or \emph{structure map\/}, of the sheaf given by
$F(x\leq y):F(y)\ra F(x)$.
Two important examples of sheaves are:
\begin{itemize}
\item For $A\in\rmod$ the \emph{constant\/} sheaf $\Delta A$ is
defined by $\Delta A(x)=A$ for every $x\in P$ and 
$(\Delta A)^y_x=1$ for every $x\leq y$ in $P$. 
\item If $L=L(A)$ is the intersection lattice of a hyperplane
arrangement $A$, then the \emph{natural
  sheaf\/} on $L$ has $F(x)$ just the space $x$ itself, and for
$x\leq y$ in $L$, the structure map $F_x^y$ is the inclusion of spaces
$y\hookrightarrow x$. 
\end{itemize}

If $f\colon Q \ra P$ is a map of posets and $F$ is a sheaf on $P$,
then there is an induced sheaf on $Q$ given by $f^*F:=F\circ f$.

\subsection{Homology}
\label{section2:subsection2}

For any sheaf $F$ on $P$ the \emph{colimit\/} $\varinjlim^P\kern-1pt
F$ is constructed by taking the 
quotient of $\bigoplus_{x\in P} F(x)$ by the submodule generated by
all elements of the form $a_y-F_x^y(a_y)$ where $x\leq
y$ and $a_y\in F(y)$. 
Taking colimits is right but not left exact, which earns them the
privilege of left derived functors. These are called
\emph{higher colimits\/} and are denoted 
$$
\textstyle{\varinjlim_{{\vrule width 0mm height 2.25 mm depth
      0mm}^{*}}^P}
:=L_* \varinjlim^P.
$$

If $0\ra F\ra G\ra H\ra 0$ is a short
exact sequence of sheaves then there is a long exact sequence
of modules:
\begin{equation}
  \label{eq:10}
\cdots \rightarrow
{\textstyle \varinjlim^P_{{\vrule width 0mm height 2 mm depth 0mm}^{i}}} F\rightarrow
{\textstyle \varinjlim^P_{{\vrule width 0mm height 2 mm depth 0mm}^{i}}} G\rightarrow
{\textstyle \varinjlim^P_{{\vrule width 0mm height 2 mm depth 0mm}^{i}}} H\rightarrow
 \cdots \rightarrow
{\textstyle \varinjlim^P} F\rightarrow
{\textstyle \varinjlim^P} G\rightarrow
{\textstyle \varinjlim^P} H\rightarrow
0
 \end{equation}

The \emph{homology
  of $P$ with coefficients in the sheaf $F$\/} are the higher
colimits evaluated at the sheaf $F$. 

Homology can be computed using an explicit chain complex in the
following way (details may be found in
\cite{Gabriel_Zisman67}*{Appendix II}). Recall that  the \emph{order
  complex\/} (or 
\emph{nerve\/}) $|P\kern1pt|$ of the poset $P$ is the simplicial complex whose vertices
are the elements of $P$ and whose $n$-simplicies are the chains
\begin{equation}
  \label{eq:8}
\sigma =x_n\leq\cdots \leq x_0.  
\end{equation}
Let $S_{\kern-1pt *}(P;F)$ be the chain complex
whose group of $n$-chains is 
$$
S_{\kern-1pt n}(P;F)=\bigoplus_\ss F(x_0)
$$
the direct sum over the $n$-simplicies (\ref{eq:8}) of
$|P\kern1pt|$. If $\sigma$ is 
an $n$-simplex and $s\in F(x_0)$, then will write $s_\ss$ for the element
of $S_{\kern-1pt n}$ that has value $s$ in the component indexed by
$\ss$ and value $0$ in all other components. 
The differential in $S_{\kern-1pt *}(P;F)$ is defined as follows. If
$$
d_i\sigma  = x_n\leq\cdots \leq \widehat{x}_i \leq \cdots \leq x_0
$$
for $0\leq i\leq n$, then  $d:S_{\kern-1pt n}(P;F)\ra
S_{\kern-1pt n-1}(P;F)$ is given by
\begin{equation}
  \label{eq:6}
ds_\ss=
F_{x_1}^{x_0}(s)_{d_0\ss} 
+\sum_{i=1}^n (-1)^i s_{d_i\ss} . 
\end{equation}
The higher colimits may be computed as the homology of this complex:
$$
\HS {*}(P;F) =
% \varinjlim_{{\vrule
%     width 0mm height 2.25 mm depth 0mm}^{*}}^P F 
\textstyle{\varinjlim_{{\vrule width 0mm height 2.25 mm depth
      0mm}^{*}}^P} F
\cong H( S_{\kern-1pt *}(P;F)).
$$

In the special case of the constant sheaf $F=\Delta A$, the homology
is just  the ordinary simplicial homology of 
$|P\kern1pt|$:
\begin{equation}
  \label{eq:26}
\HS {*}(P;\Delta A) \cong H_*(|P\kern1pt|,A). 
\end{equation}

If $f\colon Q \ra P$ is a map of posets and $F$ is a sheaf on $P$,
then there is a chain map $S_{\kern-1pt *}(Q;f^*F) \ra
S_{\kern-1pt *}(P;F)$ induced by $s_\ss\mapsto s_{f\ss}$. In
particular, if $f\colon Q \hookrightarrow P$ is an inclusion, then
$f^*F$ is just the restriction of the sheaf $F$ to the subposet $Q$
(in which case we will simply write $F$ for the restricted sheaf too) and $S_{\kern-1pt *}(Q;F)$ is
a subcomplex of $S_{\kern-1pt *}(P;F)$.

There is a variation on the complex $S_{\kern-1pt *}$ which uses only non-degenerate
simplices. The group of $n$-chains is
$$
T_n(P;F)=\bigoplus_{\ss}F(x_0)
$$
where this time the sum is over
non-degenerate simplices $\ss=x_n<\cdots <x_0$, and the differential is once again given by formula
(\ref{eq:6}). Then, $T^*(P;F)$ is a
sub-complex of $S_{\kern-1pt *}$, and there is a homotopy equivalence $T_*\simeq S_*$.   
The following lemma gathers together some small results needed later.

\begin{lemma}\label{lemma:extrema}
  \begin{enumerate}
  \item If $P$ is a finite graded poset, then $\HS {i}(P;F)\not= 0$ only if $0\leq i\leq \rk(P)$. 
  \item If $P$ has a minimum or maximum, and $\Delta A$ is a
    constant sheaf, then $\HS {0}(P;\Delta A)=A$ and
    $\HS {i}(P;\Delta A)$ vanishes for $i>0$. 
  \item If $P$ has a minimum $\0$, and $F$ is any sheaf, then
    $\HS {0}(P;F)=F(\0)$ and $\HS {i}(P;F)$ vanishes
    for $i>0$.
  \item If $P$ has a maximum $\1$, and $F$ is a sheaf on $P$ such that
    $F(\1)=0$, then $\HS {*}(P;F)$ is isomorphic to $\HS {*}(P\setminus\1;F)$.
  \end{enumerate}
\end{lemma}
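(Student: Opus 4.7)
The plan is to dispose of (1), (2), and (4) quickly, and then do the main work for (3) via an explicit chain contraction.

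For (1), I would pass to the normalised subcomplex $T_*(P;F)\simeq S_{\kern-1pt *}(P;F)$, which is supported on strictly increasing chains $x_n<\cdots<x_0$. Since $P$ is graded of rank $\rk(P)$, such a chain contains at most $\rk(P)+1$ elements, so $T_n(P;F)=0$ for $n>\rk(P)$.

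For (2), I would invoke the identification $\HS{*}(P;\Delta A)\cong H_*(|P\kern1pt|;A)$ from (\ref{eq:26}). If $P$ has a minimum $\0$ (respectively a maximum $\1$) then every simplex of $|P\kern1pt|$ may be enlarged by adjoining the extremal element, so $|P\kern1pt|$ is the simplicial cone on $|P\setminus\0|$ (respectively on $|P\setminus\1|$) and is therefore contractible. This gives $H_0(|P\kern1pt|;A)=A$ and vanishing above.

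For (4), any non-degenerate simplex $\ss=x_n<\cdots<x_0$ whose vertex set contains $\1$ must have $x_0=\1$, but then its coefficient module $F(x_0)=F(\1)=0$ contributes nothing to $T_n(P;F)$. Faces of simplices with $x_0\neq\1$ remain in $P\setminus\1$, so the natural inclusion $T_*(P\setminus\1;F)\hookrightarrow T_*(P;F)$ is an isomorphism of chain complexes, and passing to homology yields (4).

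The heart of the argument is (3). The identity $\HS 0(P;F)=F(\0)$ is immediate from the explicit quotient description of the colimit, since every generator $a_x\in F(x)$ is identified with $F^x_{\0}(a_x)\in F(\0)$ via the relation for $\0\leq x$. For higher vanishing, I would write down a contraction $h\colon T_n(P;F)\to T_{n+1}(P;F)$ that prepends $\0$: set $h(s_\ss)=(-1)^{n+1}s_{\0<\ss}$ whenever the minimum vertex $x_n$ of $\ss$ is strictly greater than $\0$, and $h(s_\ss)=0$ otherwise. Since prepending $\0$ leaves $x_0$ (hence the coefficient group $F(x_0)$) unchanged, this is well-defined. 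A direct computation with (\ref{eq:6}) -- splitting into the cases $x_n>\0$ and $x_n=\0$, and tracking the distinguished face that removes the prepended $\0$ -- should yield $dh+hd=\mathrm{id}-\pi$, where $\pi$ is a chain map concentrated in degree $0$ whose image is the summand $F(\0)\subset T_0(P;F)$. Thus $T_*(P;F)$ is chain homotopy equivalent to $F(\0)$ concentrated in degree $0$. The main obstacle is the sign bookkeeping in the boundary formula; conceptually $h$ is the standard cone contraction, mirroring the topological proof that a cone is acyclic.
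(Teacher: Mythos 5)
Parts (1), (2) and (4) match the paper: the paper also invokes the normalised complex $T_*$ for (1), the cone/contractibility argument via $(\ref{eq:26})$ for (2), and the observation that the relative chains vanish identically when $F(\1)=0$ for (4); your arguments are correct and essentially the same.

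For part (3) your route genuinely differs from the paper's. The paper argues abstractly: when $P$ has a minimum $\0$, the colimit functor is naturally isomorphic to the evaluation functor $F\mapsto F(\0)$, which is exact, so all higher derived functors (i.e.\ the higher colimits) vanish outright. You instead construct an explicit chain contraction $h$ on $T_*$ by prepending $\0$, with $h(s_\ss)=(-1)^{n+1}s_{\0<\ss}$ when the bottom element of $\ss$ is strictly above $\0$ and $h(s_\ss)=0$ otherwise. Your formula does check out: working from $(\ref{eq:6})$, for $n\ge 1$ the only term of $hd$ that survives when the bottom of $\ss$ is $\0$ is the one removing $\0$, and it returns $s_\ss$; when the bottom is above $\0$, the $d_i$ for $i\le n$ in $dh$ cancel against $hd$ while the $d_{n+1}$ term gives $s_\ss$ — so $dh+hd=\mathrm{id}$ in degrees $\ge 1$. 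In degree $0$ one picks up $dh+hd=\mathrm{id}-\pi$ with $\pi(s_{(x_0)})=F_\0^{x_0}(s)_{(\0)}$, which is an idempotent chain map onto the summand $F(\0)\subset T_0$. Both approaches are valid; the paper's is shorter and conceptual, while yours is elementary and produces the homotopy explicitly, which is occasionally useful in its own right (e.g.\ for naturality or for transporting a contraction along a map of posets).
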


\begin{proof}
Part 1  follows immediately from the existence of $T_*$; part 2
follows from (\ref{eq:26}) and the fact that $|P\kern1pt|$ is
contractible, as it is a cone on 
$|P\setminus\kern-.2mm x\kern1pt|$, where $x$ is the maximum or
minimum. In the presence
of a minimum the colimit functor is naturally 
isomorphic to the evaluation functor $F\mapsto F(\0)$, which is
exact, hence part 3. Finally, the complexes $S_{\kern-1pt *}(P\setminus\1;F)$ and
$S_{\kern-1pt *}(P;F)$ are identical when $F(\1)=0$, hence part 4.
\qed
\end{proof}

\paragraph{Remark:} If $P$ has a
maximum but no minimum then, according to the Lemma, homology with
constant coefficients $\HS {*}(P;\Delta A)$ still vanishes in every
non-zero degree. However, this is far from the case when one allows
more interesting sheaves $F$. In general  $\HS {*}(P;F)$ can be 
almost arbitrarily complicated.
% -- see for example, \cite[\S
%4.2]{EverittTurner15}.

\subsection{The Leray-Serre spectral sequence}
\label{section2:subsection3}

There is a spectral sequence for higher colimits given in 
\cite{Gabriel_Zisman67}*{Appendix II, Theorem 3.6}; the following is
the specialisation of this result
from small 
categories to posets. 

Let $f\colon P \ra Q$ be a poset map and let $F$ be a sheaf on $P$.
For each $q\geq 0$ define a sheaf $H_q^{\text{fib}}$ on $Q$ by
$$
H_q^{\text{fib}}(x) = \HS {q}(f^{-1}Q_{\geq x}; F)
$$ 
where the sheaf denoted $F$ on the right is the restriction of $F$ to
$f^{-1}Q_{\geq x}\subset P$.  
If $x \leq y$ in $Q$ then the structure map
$H_q^{\text{fib}}(y) \ra H_q^{\text{fib}}(x)$ 
is induced by the
inclusion $ Q_{\geq y} \hookrightarrow Q_{\geq x}$.  

\begin{theorem}[Leray-Serre]
\label{theorem:lerayserregrothendieck}
There is a spectral sequence 
$$
E^2_{p,q}=\HS {p}(Q; H_q^{\text{fib}})\Rightarrow \HS {p+q}(P;F)
$$
\end{theorem}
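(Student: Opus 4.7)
The plan is to derive this spectral sequence as a Grothendieck spectral sequence for the composition of two right-exact functors between categories of sheaves on posets. First I would introduce the pushforward $f_*\colon \text{Sh}(P)\to\text{Sh}(Q)$ by
$$
(f_*F)(x) = {\textstyle\varinjlim^{f^{-1}Q_{\geq x}}} F,
$$
with structure maps induced by the inclusions $f^{-1}Q_{\geq y} \subseteq f^{-1}Q_{\geq x}$ for $x \leq y$ in $Q$. A manipulation of the universal property of colimits yields the composition identity ${\textstyle\varinjlim^P} F \cong {\textstyle\varinjlim^Q}(f_*F)$, since both sides represent the same functor on $\rmod$: maps out of $\varinjlim^P F$ correspond to cocones on $F$, which in turn factor uniquely through $Q$ via the intermediate $f^{-1}Q_{\geq x}$-cocones.

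Second I would identify the derived functors of $f_*$. Evaluation of a sheaf on $Q$ at a point is exact, so derived functors commute with it; since $(f_*F)(x)$ is literally the colimit of the restriction of $F$ to $f^{-1}Q_{\geq x}$, its left derived functors satisfy
$$
(L_q f_* F)(x) \cong \HS q(f^{-1}Q_{\geq x}; F) = H^{\text{fib}}_q(x),
$$
with structure maps induced at the level of chain complexes by the inclusions $S_{\kern-1pt *}(f^{-1}Q_{\geq y}; F) \hookrightarrow S_{\kern-1pt *}(f^{-1}Q_{\geq x}; F)$ for $x \leq y$. This is precisely the sheaf $H^{\text{fib}}_q$ appearing in the statement.

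Third I would invoke the Grothendieck spectral sequence for the composition of right-exact functors. This requires the Leray acyclicity condition: that $f_*$ sends projectives in $\text{Sh}(P)$ to $\varinjlim^Q$-acyclics in $\text{Sh}(Q)$. The projective generators of $\text{Sh}(P)$ are the representable sheaves $R_z$ for $z \in P$, with $R_z(w) = R$ when $w \leq z$ and $0$ otherwise; a direct computation identifies $f_*R_z$ with the representable sheaf $R_{fz}$ on $Q$, and representables are themselves $\varinjlim^Q$-acyclic. The Grothendieck spectral sequence then takes the form
$$
E^2_{p,q} = \HS p(Q; H^{\text{fib}}_q) \Longrightarrow \HS {p+q}(P; F),
$$
as required.

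The main obstacle I anticipate is the verification of the Leray acyclicity condition — tracking exactly which colimits over the fibres collapse when applied to representables. Should this prove delicate, a more concrete alternative is to construct the spectral sequence directly from the filtration on $S_{\kern-1pt *}(P; F)$ by the number of distinct elements in the image chain $fx_n \leq \cdots \leq fx_0$ in $Q$: the associated graded pieces reindex into tensor products of $Q$-chains with fibre chains, and iterating vertical-then-horizontal homology produces the $E^2$-page as above, at the cost of heavier bookkeeping with signs and boundaries.
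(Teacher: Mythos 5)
Your proposal is correct and is essentially the approach the paper invokes: the paper gives no proof of its own but cites Gabriel--Zisman and observes the sequence is a special case of Grothendieck's spectral sequence for a composite of derived functors, which is exactly the composite-functor argument you carry out with $\varinjlim^P \cong \varinjlim^Q \circ f_*$. Your verification of Leray acyclicity via $f_*R_z \cong R_{fz}$ is the right way to close the argument, and it works because $f^{-1}Q_{\geq x}$ is upper-convex in $P$, so each representable $R_z$ restricts there either to the representable at $z$ (if $z$ lies in the fibre) or to zero, giving both the acyclicity of $f_*$ on projectives and the identification of $(L_q f_*F)(x)$ with $\HS q(f^{-1}Q_{\geq x};F)$.
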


We warn the reader that the sheaves in \cite{Gabriel_Zisman67}*{Appendix
II} are covariant, so the translation requires a number of
headstands. The sequence is a special case of the results in 
\cite{MR0102537}, where Grothendieck gives a spectral sequence that
converges to the derived functors of a composite of two functors.
  
The following corollary is a homological version of the Quillen fibre 
lemma \cite{Quillen78}, which states that if
$f:P\ra Q$ is a poset map such that for all $x\in Q$, the fiber
$f^{-1}Q_{\geq x}$ is contractible, then $f$ is a homotopy
equivalence.

\begin{corollary}\label{LSG:spectral:sequence:corollary}
Let $f:P\ra Q$ be a surjective poset map, 
let $G$ be a sheaf on $Q$ and let $F=f^*G$ be the induced sheaf on
$P$. Suppose that for all $x\in Q$ the homology
$\HS * (f^{-1}Q_{\geq x};F)$
vanishes outside degree $0$
and $\HS 0(f^{-1}Q_{\geq x};F)\cong G(x)$.
Then there is an isomorphism 
$$
\HS *(P;F) \cong \HS *(Q;G)
$$
\end{corollary}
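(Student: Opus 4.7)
The plan is to apply the Leray--Serre spectral sequence of Theorem \ref{theorem:lerayserregrothendieck} directly to $f\colon P\to Q$ and the pulled-back sheaf $F=f^*G$. The sequence reads $E^2_{p,q}=\HS{p}(Q;H_q^{\text{fib}})\Rightarrow\HS{p+q}(P;F)$, and by hypothesis the fiber homology $\HS{*}(f^{-1}Q_{\geq x};F)$ is concentrated in degree $0$ for every $x\in Q$. Hence $H_q^{\text{fib}}$ is the zero sheaf on $Q$ for $q>0$, so the $E^2$-page has only the row $q=0$ nonzero. All differentials vanish, no extension problem arises, and I obtain $\HS{p}(P;F)\cong E^2_{p,0}=\HS{p}(Q;H_0^{\text{fib}})$ for every $p\geq 0$.

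It remains to identify the sheaf $H_0^{\text{fib}}$ with $G$. For each $x\in Q$ the universal property of the colimit yields a canonical comparison map
$$
\varepsilon_x\colon H_0^{\text{fib}}(x)=\textstyle{\varinjlim^{f^{-1}Q_{\geq x}}}\kern-1pt F\longrightarrow G(x),
$$
assembled from the structure maps $F(p)=G(fp)\xrightarrow{\,G^{fp}_x\,}G(x)$ for $p\in f^{-1}Q_{\geq x}$ (note that surjectivity of $f$ guarantees $f^{-1}Q_{\geq x}$ is nonempty so the map is well-defined). I would read the hypothesis $\HS{0}(f^{-1}Q_{\geq x};F)\cong G(x)$ as the statement that this canonical $\varepsilon_x$ is an isomorphism.

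Next I would verify that $\{\varepsilon_x\}_{x\in Q}$ is a morphism of sheaves $H_0^{\text{fib}}\to G$. For $x\leq y$ in $Q$ the structure map on $H_0^{\text{fib}}$ is induced by the inclusion $f^{-1}Q_{\geq y}\hookrightarrow f^{-1}Q_{\geq x}$, and commutativity of the relevant square reduces to the identity $G^y_x\circ G^{fp}_y=G^{fp}_x$ for $p\in f^{-1}Q_{\geq y}$, which is functoriality of $G$. Thus $\varepsilon$ is a natural isomorphism of sheaves, and feeding $H_0^{\text{fib}}\cong G$ back into the collapsed spectral sequence gives $\HS{p}(P;F)\cong\HS{p}(Q;G)$.

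The main obstacle is really step three: the hypothesis supplies only a pointwise isomorphism, and the conclusion needs an isomorphism of sheaves. The key observation that makes this painless is that the universal property of the colimit already produces a canonical $\varepsilon_x$, so both the existence of the comparison map and its naturality in $x$ are forced by general nonsense rather than requiring any choices.
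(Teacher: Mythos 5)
Your proof is correct and follows essentially the same route as the paper: apply the Leray--Serre spectral sequence, observe that the $E^2$ page collapses to the $q=0$ row, and identify $H_0^{\text{fib}}$ with $G$. Your extra care in spelling out that the pointwise isomorphisms assemble into a natural isomorphism of sheaves via the canonical colimit comparison maps $\varepsilon_x$ makes explicit a naturality step that the paper asserts without elaboration.
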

  
\begin{proof}
We have
$H_q^{\text{fib}}=0$ for $q>0$ and
$H_0^{\text{fib}} (x)=G(x)$,
with structure 
maps $H_0^{\text{fib}}(x\leq y)$ identified with $G_x^y$.
Thus $H_0^{\text{fib}}=G$ and the spectral sequence of Theorem
\ref{theorem:lerayserregrothendieck} 
collapses on the $E_2$ page
with $\HS *(Q;G)$ on the $q=0$ line. The result then follows.
\qed
\end{proof}

The conditions of the corollary occur most
commonly in nature when for all $x\in Q$
the subposet $f^{-1}Q_{\geq x}$ has a minimum $z$: for then by Lemma
\ref{lemma:extrema} part 3 the homology $\HS *(f^{-1}Q_{\geq x};F)$ is
concentrated in degree $0$. Moreover, by the surjectivity of $f$, 
we have $f(z)=x$, hence $F(z)=G(x)$.

\subsection{Reduced homology for lattices}
\label{section2:subsection4}
For the sheaf homology of a poset one needs to remove the minimum;
otherwise -- see Lemma \ref{lemma:extrema}, part 3.  However there is
a reduced version of homology which provides a way of
remembering the minimum without rendering the homology almost trivial.  

Let $P$ be a poset with minimum $\0$ and let $F$ be a sheaf on $P$. We can augment the
chain complex $S_{\kern-1pt *}(P\kern-1pt\setminus \kern-1pt \0;F)$ by
defining $\epsilon \colon  S_{\kern-1pt 0}(P\kern-1pt\setminus
\kern-1pt \0;F) \ra F(\0)$ to be the sum of the structure maps
$F^x_{\0}$ over the 
$x\in P\kern-1pt\setminus \kern-1pt \0$. 
The {\em reduced homology} $\redH * (P\kern-1pt\setminus
\kern-1pt \0;F)$  is the homology of this augmented complex
$\widetilde{S}_{\kern-1pt *}(P\kern-1pt\setminus \kern-1pt\0;F)$.
The map $\epsilon$ induces 
$\epsilon_*:H_0(P\kern-1pt\setminus \kern-1pt \0;F)\ra F(\0)$,
which coincides with the map $\varinjlim^{P\setminus\0}\kern-1pt F\ra F(\0)$
induced by the $F^x_{\0}$, using the universality of the colimit.
We have  
$$
\redH i (P\kern-1pt\setminus \kern-1pt \0;F)=
\begin{cases}
\HS i (P\kern-1pt\setminus \kern-1pt \0;F), &  i>0 \\
\ker (\epsilon_*\colon \HS 0 (P\kern-1pt\setminus \kern-1pt \0;F) \ra F(\0)), & i=0
\end{cases}
$$
and $\widetilde{H}_{{-1}}(P\kern-1pt\setminus \kern-1pt \0;F)=\coker\epsilon$.
One can also use the complex $T_*(P\kern-1pt\setminus
 \kern-1pt \0;F) $ in all of the above.

%%%%%%%%%%%%%%%%%%%%%%%%%%%%%%%%%%%%%%%%%%%%%%%%%%%%%

\section{The deletion-restriction long exact sequence}
\label{section3}

Given a graded atomic lattice $L$ equipped with a sheaf $F$, then for an
atom $a\in L$ the deletion and restriction lattices  $L_a$ and  $L^a$
(as defined in \S\ref{section1:subsection1}) may be equipped with $F$
by restriction.  
The homology of these three lattices are tied together by a long exact
sequence which we establish in this section. We remind the reader that at the generality
  of graded atomic lattices $L$, the restriction $L^a$ is not itself
  atomic. To make inductive arguments, one must start with
  an $L$ carrying more structure: for example the face lattice of a
  polytope or a geometric lattice.

\begin{theorem}[Deletion-Restriction Long Exact Sequence]
\label{section3:theorem:deletion.restriction}
Let $L$ be a graded atomic lattice equipped with a sheaf $F$. Then for any
atom $a\in L$ there is a long exact sequence 
$$
\begin{pspicture}(0,0)(14,1.75)
%\showgrid
\rput(0,-0.2){
\rput(6.9,1.5){$\cdots\ra
\HS { i}(L^a\kern-1pt\setminus \kern-1pt a; F) \ra
\HS {i}(L_a \kern-1pt\setminus \kern-1pt\0;F) \ra
\HS {i}(L \kern-1pt\setminus \kern-1pt\0;F)\ra
\HS { i-1}(L^a\kern-1pt\setminus \kern-1pt a; F) \ra
\HS {i-1}(L_a \kern-1pt\setminus \kern-1pt\0;F)
$}
\rput(7.2,0.55){$\cdots \ra
\HS {1}(L\kern-1pt\setminus \kern-1pt \0;F) \ra
\redH 0 (L^a\kern-1pt\setminus \kern-1pt a; F) \ra
\HS {0}(L_a\kern-1pt\setminus \kern-1pt \0;F) \ra
\HS { 0}(L\kern-1pt\setminus \kern-1pt \0; F) \ra \coker(\epsilon_*) \ra 0$}
\rput(-0.2,0){\psarc[linewidth=0.625pt](14,1.25){0.25}{270}{90}}
\rput(0,0){\psline[linewidth=0.625pt](13.8,1)(0.15,1)}
\rput(-13.85,-0.4745){\psarc[linewidth=0.625pt](14,1.225){0.25}{90}{270}}
\rput(0.31,0.4940){$\ra$}
}
\end{pspicture}
$$
where $\epsilon_*\colon \HS 0 (L^a\kern-1pt\setminus\kern-1pt a;F) =
\varinjlim^{L^a\setminus a}\kern-1pt F \ra F(a)$ is
the map induced by the $F^x_{a}\colon F(x) \ra
F(a)$, for $x\geq a$, and the universality of the colimit.
\end{theorem}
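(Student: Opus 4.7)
The plan is to realise the sequence as the long exact sequence of a short exact sequence of chain complexes, after a single Leray--Serre replacement. Set $P=(L\setminus\0)\setminus\{a\}$ and work with the complex $T_*$ of non-degenerate simplices throughout.

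The short exact sequence of complexes is
$$0\to T_*(P;F)\to T_*(L\setminus\0;F)\to Q_*\to 0,$$
where $Q_*$ is the quotient complex. A basis for $Q_n$ consists of the non-degenerate chains in $L\setminus\0$ containing the atom $a$; since $a$ is rank one, every such chain has the form $\sigma=a<x_{n-1}<\cdots<x_0$. The rule $\sigma\mapsto\tau=x_{n-1}<\cdots<x_0$ matches $Q_n$ with the underlying module of the shifted augmented complex $\widetilde T_{n-1}(L^a\setminus a;F)$: for $n\geq 1$ both are $T_{n-1}(L^a\setminus a;F)$, and for $n=0$ both equal $F(a)$. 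A direct calculation of the differentials using formula (\ref{eq:6}) should show this is an isomorphism of chain complexes: the faces $d_i\sigma$ for $0\leq i\leq n-1$ correspond to $d_i\tau$, the face $d_n\sigma$ drops $a$ and hence vanishes in the quotient, and in degree $1$ the boundary $Q_1\to Q_0=F(a)$ recovers precisely the augmentation map defining reduced homology. The induced long exact sequence reads
$$\cdots\to H_n(P;F)\to H_n(L\setminus\0;F)\to\widetilde H_{n-1}(L^a\setminus a;F)\to H_{n-1}(P;F)\to\cdots,$$
with bottom $\to H_0(L\setminus\0;F)\to\coker(\epsilon_*)\to 0$, exactly as in the statement.

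To finish I need to replace $H_*(P;F)$ with $H_*(L_a\setminus\0;F)$, and have the substituted map $H_*(L_a\setminus\0;F)\to H_*(L\setminus\0;F)$ be the one induced by the inclusion. I plan to do this via the retraction $\phi\colon P\to L_a\setminus\0$ defined by
$$\phi(x)=\bigvee\{b\in A : b\leq x,\ b\neq a\}.$$
Checking that $\phi$ is a well-defined monotone map with $\phi\circ\iota=\id$ (for $\iota$ the inclusion) is routine: $\phi(x)\in L_a$ by construction, $\phi(x)\neq\0$ for $x\neq a$ since any atomic expression of $x$ must contain some atom other than $a$, and $\phi(y)=y$ for $y\in L_a\setminus\0$ by writing $y=\bigvee T$ with $T\subseteq A\setminus\{a\}$. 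The main computation is the Leray--Serre fibre. For $y\in L_a\setminus\0$, the equivalence $y\leq\phi(x)\iff y\leq x$ (both reducing to the statement that every atom of a chosen expression $T$ of $y$ lies below $x$) together with $\0,a\not\geq y$ shows that $\phi^{-1}((L_a\setminus\0)_{\geq y})$ equals $L_{\geq y}$. Since $y$ is the minimum of $L_{\geq y}$, Lemma~\ref{lemma:extrema}(3) forces the fibre homology to vanish outside degree $0$ and equal $F(y)$ there; a short check identifies the resulting fibre sheaf $H_0^{\mathrm{fib}}$ with $F|_{L_a\setminus\0}$. Theorem~\ref{theorem:lerayserregrothendieck} then gives a spectral sequence collapsing on the $q=0$ row, producing an isomorphism $H_*(P;F)\cong H_*(L_a\setminus\0;F)$ which, by $\phi\iota=\id$, is inverse to $\iota_*$. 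Substituting into the long exact sequence completes the proof.

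The main obstacle is the Leray--Serre step, and specifically the fibre identification $\phi^{-1}((L_a\setminus\0)_{\geq y})=L_{\geq y}$: this is where the graded atomic hypothesis really enters, through the expression of $y$ as a join of atoms distinct from $a$. Once this is established, the collapse of the spectral sequence, the identification of $H_0^{\mathrm{fib}}$ with $F|_{L_a\setminus\0}$, and the compatibility of the substituted maps with the inclusion $L_a\setminus\0\hookrightarrow L\setminus\0$ should all follow cleanly from the retraction property.
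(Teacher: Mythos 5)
Your proposal is correct and takes essentially the same approach as the paper: the same short exact sequence of $T_*$-complexes with the same identification of the quotient $Q_*$ with the shifted augmented complex of $L^a\setminus a$, and the same Leray--Serre collapse. Your retraction $\phi$ is the paper's map $t$ of Lemma~\ref{section3:lemma} with the case split unfolded (note $t(x)=\bigvee B_x$ holds even when $x\in L_a$), and your fibre identification $\phi^{-1}((L_a\setminus\0)_{\geq y})=L_{\geq y}$ is a slightly sharper form of the paper's claim that this fibre has minimum $y$, which is what the spectral-sequence collapse actually requires.
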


In the proof of Theorem \ref{section3:theorem:deletion.restriction}
% and its Corollary we
will use the sub-poset $L_1$ of $L$ defined by 
$L_1=L\kern-1pt\setminus \kern-1pt \{\0, a\}$. 
%Before we start the proof 
%of Theorem \ref{section3:theorem:deletion.restriction}, 
%we need a small result
%about
% the sub-poset 
% $L_1$:
 If $A$ is the set of atoms of $L$, then for $x\in
  L\setminus\0$, let $A_x$ be those atoms $\leq x$ and let
  $B_x=A_x\cap (A\setminus\{a\})$. When $x\in L_1$ we have
  $B_x\not=\varnothing$. Define a mapping $t:L_1\ra L_a\setminus\0$ by
  \begin{equation}
    \label{eq:4}
t(x)=
\left\{\begin{array}{ll}
  x,&x\in L_a\\
  \bigvee B_x,&x\not\in L_a.
\end{array}\right.    
  \end{equation}

\begin{lemma}
\label{section3:lemma}
The map $t$ is a poset map and for any $x\in L_a\setminus\0$, the
pre-image $t^{-1}((L_a\kern-1pt\setminus\kern-1pt\0)_{\geq x})$ has
minimum $x$. 
\end{lemma}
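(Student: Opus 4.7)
The plan is to begin by collapsing the case-split in the definition of $t$: I claim that $t(x)=\bigvee B_x$ holds uniformly for every $x\in L_1$. When $x\notin L_a$ this is the definition; when $x\in L_a\setminus\0$ one can write $x=\bigvee S$ for some $S\subseteq A\setminus\{a\}$, observe that $S\subseteq B_x$ gives $x\leq\bigvee B_x$, and conversely invoke the atomicity of $L$ to obtain $\bigvee B_x\leq\bigvee A_x=x$. The same reasoning yields the universal inequality $t(y)\leq y$ on all of $L_1$, which will be the key tool below. One should further note that $t(x)\in L_a\setminus\0$: membership in $L_a$ is clear since $B_x\subseteq A\setminus\{a\}$, while $B_x\neq\varnothing$ for $x\in L_1$ (otherwise $A_x$ would be $\varnothing$ or $\{a\}$, forcing $x=\0$ or $x=a$), so $t(x)$ dominates an atom and is nonzero.

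With this uniform description, monotonicity of $t$ is immediate: if $x\leq y$ in $L_1$, then $A_x\subseteq A_y$, hence $B_x\subseteq B_y$, and therefore $t(x)=\bigvee B_x\leq\bigvee B_y=t(y)$. So $t$ is a poset map.

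For the minimum statement, fix $x\in L_a\setminus\0$. I first want to know that $x\in L_1$, equivalently that $a\notin L_a$: were $a=\bigvee S$ with $S\subseteq A\setminus\{a\}$ nonempty, then any $s\in S$ would be an atom distinct from $a$ with $s\leq a$, contradicting the fact that $a$ and $s$ both have rank one in the graded lattice $L$, while $S=\varnothing$ would give $a=\0$. The uniform description then yields $t(x)=x$, so $x\in t^{-1}((L_a\setminus\0)_{\geq x})$. Conversely, any $y$ in this preimage satisfies $t(y)\geq x$ and $y\geq t(y)$, whence $y\geq x$. Thus $x$ is the minimum.

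The only real obstacle is the uniformisation step, where atomicity of $L$ must be invoked to reconcile the two formulas defining $t$ on $L_a$. Once this together with the inequality $t(y)\leq y$ is in place, both halves of the lemma follow essentially by bookkeeping.
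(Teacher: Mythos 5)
Your proof is correct and follows essentially the same route as the paper's, resting on the two observations that $B_x\subseteq B_y$ whenever $x\leq y$ and that $t(y)\leq y$ for all $y\in L_1$. The one cosmetic improvement is your upfront uniformisation $t(x)=\bigvee B_x$ on all of $L_1$ (using atomicity to see $\bigvee B_x=x$ for $x\in L_a\setminus\0$), which collapses the paper's case analyses for both the poset-map claim and the minimum claim into single one-line computations.
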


\begin{proof}
Observe that for $x\leq y$ we have $B_x\subseteq B_y$. Moreover, as
$B_x\subseteq A_x$, then for $x\not\in L_a$ we have $t(x)=\bigvee
B_x\leq \bigvee A_x=x$ (if $x\in L_a$ then trivially $t(x)\leq x$). To
show that $t$ is a poset map, suppose that $x\leq y$ in $L_1$. It is
easy to see that if $x,y$ are both in $L_a$, or both  not in $L_a$,
then $t(x)\leq t(y)$. If $x\in L_a$ and $y\not\in L_a$, then
$x=\bigvee B$ for some $B\subseteq B_x$, hence $t(x)=x=\bigvee B\leq
\bigvee B_x\leq \bigvee B_y=t(y)$. Finally, if $x\not\in L_a$ and
$y\in L_a$, then $t(x)\leq x\leq y=t(y)$.

For the second claim, $t(x)=x$ gives $x\in
t^{-1}((L_a\kern-1pt\setminus\kern-1pt\0)_{\geq x})$. If $y$ is an
element of $t^{-1}((L_a\kern-1pt\setminus\kern-1pt\0)_{\geq x})$ then
$t(y)\in  (L_a\kern-1pt\setminus\kern-1pt\0)_{\geq x}$, and in
particular $x\leq t(y)$. If $y$  is itself in
$L_a\kern-1pt\setminus\kern-1pt\0$, then $y=t(y)\geq x$. 
If $y\notin L_a\kern-1pt\setminus\kern-1pt\0$ then $A_y=B_y\cup\{a\}$
so that $x\leq t(y)\leq a\vee t(y)=a\vee\bigvee B_y=\bigvee A_y=y$.
\qed
\end{proof}

% \begin{lemma}
% \label{section3:lemma}
% \begin{enumerate}
% \item Let $x\in L_1=L\kern-1pt\setminus \kern-1pt \{\0, a\}$ be such
%   that $x\not\in L_a$. Then there exists a 
%   unique $b\in L_a\kern-1pt\setminus \kern-1pt\0$ such that $x=a\vee b$.
% \item Define $t:L_1\rightarrow L_a\kern-1pt\setminus \kern-1pt\0$ by
% $$
% t(x)=
% \left\{\begin{array}{ll}
%   x,&x\in L_a\\
%   b,&x\not\in L_a, \text{  as in part 1.}
% \end{array}\right.
% $$
% Then $t$ is a poset map.
% \end{enumerate}
% \end{lemma}

% \begin{proof}
% The existence of $b$ in part 1 is clear, since any expression for $x$
% as a join of atoms must involve $a$, hence $x=a\vee\bigvee B$ for some
% atoms $B$ not equal to $a$; let $b=\bigvee B\in L_a$. In fact, $x$ covers
% $b$, as by (\ref{eq:7}) we have $\rk(x)=\rk(b)+1$. This means that two
% such $a\vee b=x=a\vee b'$ would give $x=b\vee b'$, a contradiction, as
% $x\not\in L_a$. To see that $t$ is a poset map, let $x\leq y\in L_1$
% and consider separately the four cases determined by whether or not
% $x,y$ are also in $L_a$. We just do the trickiest, where both
% $x,y\not\in L_a$. We have $x=a\vee\bigvee B$ for some atoms $B$ not
% equal to $a$, and $y=a\vee\bigvee B'$ similarly. As $y=y\vee x$ and
% $\vee$ is idempotent, we have $y=a\vee\bigvee B\vee\bigvee
% (B'\setminus B)$. In particular $\bigvee B\leq \bigvee B\vee\bigvee
% (B'\setminus B)$, i.e. $t(x)\leq t(y)$ by the uniqueness in part 1. 
% \qed
% \end{proof}

\begin{proof}(of the deletion-restriction long exact sequence).
Equip $L_1=L\kern-1pt\setminus \kern-1pt \{\0, a\}$ with the
restriction of $F$. There is an inclusion of complexes 
$$
T_*(L_1;F) \ra T_*(L\kern-1pt\setminus \kern-1pt \0;F)
$$
%where, we recall that $T_*$ is the complex defined using non-degenerate simplicies. 
with quotient $Q_*$ where
$$
Q_n = \bigoplus_{\ss}F(x_0),
$$ 
for $n>0$ is the sum over the
non-degenerate simplices $\ss=a< x_{n-1}<\cdots <x_0$, and
$Q_0=F(a)$. The differential 
  $d:Q_n\ra Q_{n-1}$ is given by
\begin{equation}
 % \label{eq:6}
ds_\ss=
F_{x_1}^{x_0}(s)_{d_0\ss} 
+\sum_{i=1}^{n-1} (-1)^i s_{d_i\ss}
\end{equation}
and $d:Q_1\ra Q_0$ is the map $s_x\mapsto F_a^x(s_x)$ for $x>a$.

Notice that $\ss=a< x_{n-1}<\cdots <x_0$  is a
simplex in $|L_{\geq a}|$. There is an evident isomorphism between $Q_*$
and the augmented complex $\widetilde{T}_{*-1}(L_{> a};F)$, 
and in
homology
$$
H_iQ 
%= \HS {i-1} (L_{> a};F) 
\cong \HS {i-1} (L_{> a};F) 
= \HS {i-1}(L^a\kern-1pt\setminus \kern-1pt a; F),
$$ 
for $i>1$.
We also have $H_1Q \cong \redH 0(L_{> a};F) = \redH 0
(L^a\kern-1pt\setminus \kern-1pt a; F)$ and $H_0Q = \coker
(\epsilon_*)$. 

The short exact sequence
$$
0\ra T_*(L_1;F) \ra T_*(L\kern-1pt\setminus \kern-1pt \0;F) \ra Q_* \ra 0 
$$
thus induces a long exact sequence
$$
\begin{pspicture}(0,0)(14,1.75)
%\showgrid
\rput(-0.4,-0.2){
\rput(7.4,1.5){$\cdots\ra
\HS { i}(L^a\kern-1pt\setminus \kern-1pt a; F) \ra
\HS {i}(L_1;F) \ra
\HS {i}(L \kern-1pt\setminus \kern-1pt\0;F)\ra
\HS { i-1}(L^a\kern-1pt\setminus \kern-1pt a; F) \ra
\HS {i-1}(L_1;F)
$}
\rput(7,0.55){$\cdots \ra
\HS {1}(L\kern-1pt\setminus \kern-1pt \0;F) \ra
\redH 0 (L^a\kern-1pt\setminus \kern-1pt a; F) \ra
\HS {0}(L_1;F)\ra
\HS { 0}(L\kern-1pt\setminus \kern-1pt \0; F) \ra \coker(\epsilon_*)\ra 0$}
\rput(-0.2,0){\psarc[linewidth=0.625pt](14,1.25){0.25}{270}{90}}
\rput(0,0){\psline[linewidth=0.625pt](13.8,1)(0.15,1)}
\rput(-13.85,-0.4745){\psarc[linewidth=0.625pt](14,1.225){0.25}{90}{270}}
\rput(0.31,0.4940){$\ra$}
}
\rput(14.2,0.875){(*)}
\end{pspicture}
$$

We finish the proof by showing that $\HS {i}(L_1;F)\cong
\HS {i}(L_a\kern-1pt\setminus\kern-1pt\0;F)$ for all $i$. 
For this we apply the Leray-Serre spectral sequence to the map 
$t:L_1\ra L_a\kern-1pt\setminus\kern-1pt\0$ of Lemma
\ref{section3:lemma}. 
The spectral sequence is of the form
$$
E^2_{p,q}=\HS {p}(L_a\kern-1pt\setminus\kern-1pt\0; H_q^{\text{fib}})\Rightarrow \HS {p+q}(L_1;F)
$$
where for $x\in L_a\kern-1pt\setminus\kern-1pt\0$, 
$$
H_q^{\text{fib}}(x) = \HS {q}(t^{-1}((L_a\kern-1pt\setminus\kern-1pt\0)_{\geq x}); F).
$$ 
%
% We claim that $t^{-1}(L_a\kern-1pt\setminus\kern-1pt\0)_{\geq x}$
% has a minimum element, namely $x$. To see this, let  
% $y$ be an element of $t^{-1}(L_a\kern-1pt\setminus\kern-1pt\0)_{\geq
%   x}$, so that $t(y)
% \in  (L_a\kern-1pt\setminus\kern-1pt\0)_{\geq x}$, and in particular
% $x\leq t(y)$. If $y$  is itself in
% $L_a\kern-1pt\setminus\kern-1pt\0$, then $t(y) = y$ and $x\leq t(y) =
% y$. If $y\notin L_a\kern-1pt\setminus\kern-1pt\0$ then there  exists $b$ with
% $y=a\vee b$ and $t(y) = b$. Thus  $x \leq t(y) \leq  a\vee t(y) = a
% \vee b =y$, and the claim follows.
%
By Lemma \ref{section3:lemma},  the poset
$t^{-1}((L_a\kern-1pt\setminus\kern-1pt\0)_{\geq x})$ has a minimum,
so Lemma \ref{lemma:extrema} part 3 then gives 
$$
H_q^{\text{fib}}(x) =
\begin{cases}
F(x) & q=0\\
0 & \text{otherwise.}
\end{cases}
$$
Therefore the spectral sequence has a single row ($q=0$) on which
$E^2_{p,0} = \HS {p}(L_a\kern-1pt\setminus\kern-1pt\0; F)$.
The sequence thus collapses at the $E^2$-page, and we conclude that
$\HS {p}(L_a\kern-1pt\setminus\kern-1pt\0; F) \cong \HS {p}(L_1;F)$.\qed
\end{proof}

We state as a corollary a special case that we will use on hyperplane
arrangements in the next section. 
 
\begin{corollary}[Reduced Deletion-Restriction Long Exact Sequence]
\label{section3:corollary:deletion.restriction}
Let $L$ be a  graded atomic lattice equipped with a sheaf $F$. Let
$a\in L$ be an atom such that $\epsilon_* \colon
\varinjlim^{L^a\setminus  a} \kern-1pt F \ra F(a)$ is a
surjection. Then, 
there is a long exact sequence 
$$
\begin{pspicture}(0,0)(14,1.75)
%\showgrid
\rput(0,-0.2){
\rput(6.9,1.5){$\cdots\ra
\redH i (L^a\kern-1pt\setminus \kern-1pt a; F) \ra
\redH i (L_a \kern-1pt\setminus \kern-1pt\0;F) \ra
\redH i (L \kern-1pt\setminus \kern-1pt\0;F)\ra
\redH {i-1}(L^a\kern-1pt\setminus \kern-1pt a; F) \ra
\redH {i-1}(L_a \kern-1pt\setminus \kern-1pt\0;F)
$}
\rput(6.25,0.55){$\cdots \ra
\redH 1 (L\kern-1pt\setminus \kern-1pt \0;F) \ra
\redH 0 (L^a\kern-1pt\setminus \kern-1pt a; F) \ra
\redH 0 (L_a\kern-1pt\setminus \kern-1pt \0;F) \ra
\redH 0 (L\kern-1pt\setminus \kern-1pt \0; F) \ra 0$}
\rput(-0.2,0){\psarc[linewidth=0.625pt](14,1.25){0.25}{270}{90}}
\rput(0,0){\psline[linewidth=0.625pt](13.8,1)(0.15,1)}
\rput(-13.85,-0.4745){\psarc[linewidth=0.625pt](14,1.225){0.25}{90}{270}}
\rput(0.31,0.4940){$\ra$}
}
\end{pspicture}
$$
\end{corollary}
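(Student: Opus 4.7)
My plan is to reprove Theorem~\ref{section3:theorem:deletion.restriction} with augmented chain complexes in place of the unaugmented ones, so that reduced homology appears uniformly; the surjectivity hypothesis will then truncate the resulting sequence into the displayed form.

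First, I would augment both $T_*(L_1;F)$ and $T_*(L\setminus\0;F)$ by appending $F(\0)$ in degree $-1$ with differential given by the structure maps $F_\0^x$. Since $L_1 \subset L\setminus\0$ and both augmentations are defined by the same rule into the same module, the short exact sequence
\[
0 \to T_*(L_1;F) \to T_*(L\setminus\0;F) \to Q_* \to 0
\]
from the proof of the theorem lifts to
\[
0 \to \widetilde T_*(L_1;F) \to \widetilde T_*(L\setminus\0;F) \to Q_* \to 0,
\]
with $Q_*$ unchanged, the quotient in degree $-1$ being $F(\0)/F(\0)=0$. The resulting long exact sequence then has reduced homology $\redH i(L_1;F)$ and $\redH i(L\setminus\0;F)$ throughout, while the identification $Q_* \cong \widetilde T_{*-1}(L^a\setminus a;F)$ from the theorem's proof still gives $H_iQ \cong \redH{i-1}(L^a\setminus a;F)$ for $i\geq 1$ and $H_0Q = \coker\epsilon_*$. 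The hypothesis that $\epsilon_*$ is surjective forces $H_0Q = 0$, so the sequence terminates at $\redH 0(L\setminus\0;F) \to 0$.

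It remains to replace $\redH *(L_1;F)$ by $\redH *(L_a\setminus\0;F)$. In degrees $\geq 1$ this is automatic: reduced and unreduced homology agree, and the Leray--Serre argument in the theorem's proof already supplies the isomorphism. In degree $0$ I would verify directly that the inclusion $\iota\colon L_a\setminus\0 \hookrightarrow L_1$ induces an isomorphism on $H_0 = \varinjlim F$. The poset retraction $t$ of Lemma~\ref{section3:lemma} satisfies $t(x)\leq x$, so every generator $s\in F(x)$ with $x\in L_1\setminus L_a$ is identified in the colimit with $F_{t(x)}^x(s)\in F(t(x))$, giving surjectivity; conversely the module map $T$ defined by $s\mapsto F_{t(z)}^z(s)$ carries the defining relations of $\varinjlim^{L_1}\! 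F$ to those of $\varinjlim^{L_a\setminus\0}\! F$ and restricts to the identity on $\bigoplus_{z\in L_a\setminus\0}F(z)$, giving injectivity. Since $\iota$ commutes with the common augmentation to $F(\0)$, the snake lemma applied to the square of five-term sequences
\[
0\to\redH 0 \to H_0 \xrightarrow{\epsilon_*} F(\0) \to\widetilde H_{-1}\to 0
\]
for $L_a\setminus\0$ and $L_1$ transfers the isomorphism to reduced $H_0$.

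The main obstacle is the last step, namely showing $\iota_*$ is an isomorphism on $H_0$ compatible with augmentations, since the Leray--Serre spectral sequence produces the isomorphism $H_*(L_1;F)\cong H_*(L_a\setminus\0;F)$ only abstractly and does not immediately identify the comparison map. Once this compatibility is established, everything else is a mechanical reprise of the theorem's proof with one extra module appended in degree $-1$.
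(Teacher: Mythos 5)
Your proof is correct, and it reorganizes the paper's argument in a way that is arguably cleaner at the chain level. The paper's route is to take the unreduced long exact sequence $(*)$ from the proof of Theorem~\ref{section3:theorem:deletion.restriction} and then, in degree $0$, verify by hand that the connecting map $f\colon\redH 0(L^a\setminus a;F)\to\HS 0(L_1;F)$ lands inside the subgroup $\redH 0(L_1;F)=\ker\epsilon_*$ and that the restriction $\tilde g$ of $g\colon\HS 0(L_1;F)\to\HS 0(L\setminus\0;F)$ surjects onto $\redH 0(L\setminus\0;F)$. Your augmented short exact sequence $0\to\widetilde T_*(L_1;F)\to\widetilde T_*(L\setminus\0;F)\to Q_*\to 0$ packages exactly these checks automatically: the inclusion of augmented complexes is a chain map because both augmentations are the compatible sums of $F_\0^x$, the quotient in degree $-1$ is $F(\0)/F(\0)=0$ so $Q_*$ is unchanged, and the resulting long exact sequence carries $\redH *$ from the outset and terminates as claimed once $H_0Q=\coker\epsilon_*=0$. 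Your explicit check that the inclusion $\iota\colon L_a\setminus\0\hookrightarrow L_1$ induces an isomorphism on $H_0=\varinjlim F$ (via the retraction $t$ of Lemma~\ref{section3:lemma}) and commutes with the augmentation to $F(\0)$ is a correct and more detailed account of the final substitution, which the paper leaves implicit. One small clarification on your closing worry: the compatibility of the Leray--Serre isomorphism with the comparison maps in degrees $\geq 1$ is not actually needed, since any term in an exact sequence may be replaced by an isomorphic group, transporting the adjacent maps along the isomorphism; the only compatibility that genuinely matters is with the augmentation to $F(\0)$ in degree $0$, precisely because $\redH 0$ is defined as $\ker\epsilon_*$ inside $\HS 0$, and that is exactly what your retraction argument supplies.
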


\begin{proof}
Consider the long exact sequence $(*)$ in the proof of Theorem
\ref{section3:theorem:deletion.restriction}, and let 
$$
f: \redH 0
(L^a\kern-1pt\setminus \kern-1pt a; F) \ra \HS {0}(L_1;F)
\text{ and }
g: \HS {0}(L_1;F)\ra
\HS { 0}(L\kern-1pt\setminus \kern-1pt \0; F).
$$
One can then show that $\im f\subseteq \redH 0 (L_1;F)\subset
\HS{0}(L_1;F)$. Now restrict $g$ to 
$\tilde{g}:\redH 0 (L_1;F)\ra \HS { 0}(L\kern-1pt\setminus \kern-1pt
\0; F)$. One then gets that 
$\im f=\ker\tilde{g}$ and so $\HS {0}(L_1;F)$ can be replaced by
$\redH 0 (L_1;F)$ in the long exact sequence. Similarly $\tilde{g}$
maps $\redH 0 (L_1;F)$ onto $\redH 0 (L\kern-1pt\setminus \kern-1pt
\0;F)$, so we can also replace the last term in the sequence with its
reduced version (the final $\coker(\epsilon_*)$ is already $0$ by the
assumption in the Corollary). Then continue as in the proof of Theorem
\ref{section3:theorem:deletion.restriction}, replacing $\redH 0
(L_1;F)$ by $\redH 0 (L_a\kern-1pt\setminus \kern-1pt\0;F)$. All the
other terms in the sequence (*) are automatically equal to their
reduced versions.
\qed
\end{proof}

%%%%%%%%%%%%%%%%%%%%%%%%%%%%%%%%%%%%%%%%%%%%%%%%%%%%%

\section{Application to hyperplane arrangements}
\label{section:application}

In this section $L=L(A)$ is the intersection lattice of a hyperplane
arrangement $A$ in the vector space $V$, and $F$ is the natural
  sheaf on $L$ (see \S\ref{section2:subsection1}). 
  
  \subsection{Reduced homology}
\label{subsection:application:reduced}

  Our goal is to compute $\redH i
  (L\kern-1pt\setminus\kern-1pt\0;F)$, and our main tool is 
Corollary \ref{section3:corollary:deletion.restriction},
the reduced deletion-restriction long exact sequence. To apply
it  we need the following small result. 
 
 \begin{lemma} 
\label{section:application:lemmadegreezero}
Let $L$ be the intersection lattice of a hyperplane
arrangement with $\rk(L)\geq2$ and let $F$ be the natural
sheaf on $L$. Then the map $\epsilon_* \colon
\varinjlim^{L\kern-1pt\setminus \kern-1pt \0}\kern-1pt F \ra F(\0)$
induced by the $F^x_{\0}\colon F(x) \ra F(\0)$, for $x\in L\setminus\0$, is
surjective. 
\end{lemma}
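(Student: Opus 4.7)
The plan is to unwind what $\epsilon_*$ does on the natural sheaf, identify its image explicitly as a subspace of $V$, and then verify that this subspace is all of $V$ using a direct linear-algebra argument.

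First, I would recall that for the natural sheaf $F(x) = x \subseteq V$ for every $x \in L$, and the structure map $F^y_x\colon F(y) \ra F(x)$ attached to $x \leq y$ (i.e., $x \supseteq y$) is the inclusion $y \hookrightarrow x$. In particular, each $F^x_{\0}\colon F(x) \ra F(\0) = V$ is the inclusion of the subspace $x$ into $V$. By the construction of the colimit in \S\ref{section2:subsection2} and the universal property used to define $\epsilon_*$, the image of $\epsilon_*$ is precisely the sum of subspaces
$$
\sum_{x \in L \setminus \0} x \ \subseteq\ V.
$$

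Next, I would note that every $x \in L \setminus \0$ is obtained as an intersection of hyperplanes in $A$, so $x$ is contained in any atom $a$ appearing in such an intersection. Consequently $\sum_{x \in L \setminus \0} x = \sum_{a \in A} a$, i.e., the image of $\epsilon_*$ is the sum in $V$ of the hyperplanes in the arrangement.

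Finally, the hypothesis $\rk(L) \geq 2$ forces $A$ to contain at least two distinct hyperplanes $a_1, a_2$; if $|A|\leq 1$ then $\1 = \bigcap_{a \in A} a$ would have codimension at most one. But two distinct hyperplanes $a_1, a_2$ of $V$ always satisfy $a_1 + a_2 = V$: since $a_2 \not\subseteq a_1$, the sum $a_1 + a_2$ properly contains the codimension-one subspace $a_1$, hence equals $V$. Therefore $\sum_{a \in A} a = V$, and $\epsilon_*$ is surjective.

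The argument is short and essentially linear-algebraic; there is no serious obstacle. The only point requiring a moment of care is tracking the reverse-inclusion convention on $L$, so that one correctly identifies the structure maps $F^x_{\0}$ as inclusions $x \hookrightarrow V$ and reads off the image of $\epsilon_*$ as a genuine sum of subspaces of $V$.
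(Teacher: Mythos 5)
Your proof is correct and follows essentially the same route as the paper: identify the image of $\epsilon_*$ as the sum of the subspaces $x\subseteq V$, observe that $\rk(L)\geq 2$ gives two distinct hyperplanes, and note that their vector-space sum is already all of $V$. The paper's version is simply more terse; you have filled in the same reasoning with explicit detail.
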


\begin{proof}
Since $\rk(L)\geq2$, the arrangement has at least two distinct hyperplanes,
whose vector space sum is $F(\0)$. The result follows immediately from the
definition of colimit.
\qed
\end{proof}

For any atom $a$ in an arrangement lattice $L$, the restriction
$L^a$ is also an arrangement lattice with minimum $a$;
in particular $L^a$ is graded atomic and 
$\epsilon_* \colon \varinjlim^{L^a\setminus  a}
\kern-1pt F \ra F(a)$ is a surjection, so we can use the long exact
sequence of Corollary \ref{section3:corollary:deletion.restriction}
to make inductive arguments. Throughout this section we will therefore use reduced
homology. 
 
We begin with the special cases of rank 2 lattices and of Boolean lattices.

\begin{proposition}
\label{section:application:propranktwo}
Let $L=L(A)$ be the intersection lattice of a hyperplane
arrangement with $\rk(L) = 2$ and let $F$ be the natural
sheaf on $L$.  Then
$\redH i (L \kern-1pt\setminus \kern-1pt\0;F)$ is trivial when
$i\not= 0$ and 
$\dim \redH 0 (L \kern-1pt\setminus \kern-1pt\0;F)=|A|-2.$
\end{proposition}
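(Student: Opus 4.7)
The plan is to compute $\redH *(L\kern-1pt\setminus\kern-1pt\0; F)$ directly from the augmented chain complex $\widetilde T_*(L\kern-1pt\setminus\kern-1pt\0; F)$, exploiting the fact that $L\kern-1pt\setminus\kern-1pt\0$ is very small. Explicitly, $L\kern-1pt\setminus\kern-1pt\0$ consists of the $n=|A|$ atoms $a_1,\dots,a_n$ together with the maximum $\1=\bigcap_i a_i$, and the only non-degenerate $1$-simplices are the edges $a_i<\1$ (there are no higher non-degenerate chains). Lemma \ref{lemma:extrema}(1) then immediately gives $\redH i=0$ for $i\geq 2$, and Lemma \ref{section:application:lemmadegreezero} gives $\redH{-1}=\coker\epsilon=0$, so all the work lies in degrees $0$ and $1$.

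Unwinding the definitions, $T_0=F(\1)\oplus\bigoplus_{i=1}^n F(a_i)$ and $T_1=\bigoplus_{i=1}^n F(\1)$ (one copy for each edge $a_i<\1$, whose top element is $\1$, so $F(x_0)=F(\1)$). Using the differential formula (\ref{eq:6}), the $i$-th summand of $T_1$ sends $s\in F(\1)$ to $(\iota_i s)_{(a_i)} - s_{(\1)}$, where $\iota_i\colon \1\hookrightarrow a_i$ is the natural inclusion. Reading off the $a_i$-component of a cycle shows that each $s_i$ must lie in the kernel of the injection $\iota_i$, hence is zero; so $d$ is injective and $\redH 1=\ker d=0$.

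For $\redH 0$ I would use an Euler characteristic count: since $\epsilon$ is surjective and $d$ is injective,
$$
\dim\redH 0 = \dim T_0 - \dim T_1 - \dim V = (\dim V - 2) + n(\dim V - 1) - n(\dim V - 2) - \dim V = n-2,
$$
using $\dim F(\1) = \dim V - 2$ (as $\rk(\1)=2=\codim \1$) and $\dim F(a_i)=\dim V-1$. There is no serious obstacle because the complex has only two nonzero terms; the only delicate point is correctly identifying $T_1$ as summing copies of $F(\1)$ (the top of each edge) rather than copies of $F(a_i)$, and keeping the signs and structure maps straight in the formula for $d$.
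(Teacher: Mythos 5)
Your proposal is correct and follows essentially the same route as the paper's own proof: both write out the small non-degenerate complex $T_*$ explicitly, observe that $d\colon T_1\to T_0$ is injective, and then do an Euler-characteristic count together with the surjectivity of the augmentation to get $\dim\redH 0 = |A|-2$. Your version is merely a bit more explicit about why $d$ is injective (unwinding formula (\ref{eq:6}) and reading off the $a_i$-component), where the paper simply asserts it is "easily seen."
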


\begin{proof}
The homology is concentrated in degrees 0 and 1. The complex $T_{\kern-1pt*}$
of \S\ref{section2:subsection2} can be written out explicitly,
from which it is easily seen that $d\colon T_{\kern-1pt 1} \ra T_{\kern-1pt 0}$ is
injective, hence $ \HT {1} = 0$. Moreover
$$
\dim T_{\kern-1pt 0} = |A| (\dim V -1) + (\dim V - 2) 
\text{ and }
\dim T_{\kern-1pt 1} = |A| (\dim V -2)
$$
so that 
$$
\dim \HT 0 = \dim T_{\kern-1pt 0}  - \dim (\im d)= \dim
T_{\kern-1pt 0}  
- \dim T_{\kern-1pt 1} =   \dim V + |A| -2
$$
The augmentation $\epsilon_* \colon \HT 0 \ra V$
is surjective by Lemma
\ref{section:application:lemmadegreezero}, so that
$$
\vrule width 33mm height 0 mm depth 0mm
\dim \redH 0 = \dim \ker \epsilon_* = \dim \HT 0 - \dim V = |A| -2.
\vrule width 33mm height 0 mm depth 0mm
\qed
$$
\end{proof}

\begin{proposition}
\label{section:application:propboolean}
Let $B$ be a Boolean lattice that is the intersection lattice of a hyperplane
arrangement with $\rk(B)\geq 2$,  and let $F$ be the natural
  sheaf on $B$. Then $\redH * (B\setminus\0;F)$ is trivial.
  \end{proposition}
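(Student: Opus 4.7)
The plan is to proceed by induction on $n=\rk(B)\geq 2$, using the reduced deletion-restriction long exact sequence (Corollary \ref{section3:corollary:deletion.restriction}) as the engine.

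For the base case $n=2$, since $B$ is Boolean the atoms are independent, so $|A|=\rk(B)=2$. Proposition \ref{section:application:propranktwo} then gives $\dim \redH 0 (B\kern-1pt\setminus\kern-1pt\0;F) = |A|-2 = 0$, and higher reduced homology vanishes for rank-two arrangement lattices, so $\redH * (B\kern-1pt\setminus\kern-1pt\0;F)=0$.

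For the inductive step $n\geq 3$, pick any atom $a\in A$ and first verify that both $B_a$ and $B^a$ are again Boolean arrangement lattices of rank $n-1$. For the deletion $B_a$, the atom set is $A\setminus\{a\}$, which is independent as a subset of the independent set $A$; since $B_a$ is graded atomic, Proposition \ref{lemma:independent_is_boolean} gives $B_a \cong B(A\setminus\{a\})$. For the restriction $B^a$, the atoms are $\{a\vee b : b\in A\setminus\{a\}\}$, and one checks these are independent in $B^a$: for any proper subset $A'\subsetneq A\setminus\{a\}$, the join $\bigvee_{b\in A'}(a\vee b) = a\vee\bigvee A'$ is strictly less than $a\vee\bigvee(A\setminus\{a\}) = \1$ because $A$ is independent in $B$. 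Since $B^a=L(A^a)$ is itself an arrangement lattice (hence graded atomic, as noted in \S\ref{section1:subsection1}), Proposition \ref{lemma:independent_is_boolean} again gives that $B^a$ is Boolean of rank $n-1$.

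Because $\rk(B^a)=n-1\geq 2$, Lemma \ref{section:application:lemmadegreezero} applied to the arrangement lattice $B^a$ guarantees that the augmentation $\epsilon_*\colon \varinjlim^{B^a\setminus a}\kern-1pt F \ra F(a)$ is surjective, so Corollary \ref{section3:corollary:deletion.restriction} applies and yields the reduced long exact sequence relating $\redH *(B\kern-1pt\setminus\kern-1pt\0;F)$, $\redH *(B_a\kern-1pt\setminus\kern-1pt\0;F)$ and $\redH *(B^a\kern-1pt\setminus\kern-1pt a;F)$. By the inductive hypothesis applied to the rank $(n-1)$ Boolean arrangement lattices $B_a$ and $B^a$, the latter two groups vanish in every degree, and the long exact sequence forces $\redH *(B\kern-1pt\setminus\kern-1pt\0;F)=0$.

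The main obstacle is verifying that the restriction $B^a$ is genuinely Boolean (rather than only atomic), since this is precisely what lets the induction close up. Everything else is a bookkeeping exercise in assembling the long exact sequence and invoking the previous structural results.
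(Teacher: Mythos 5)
Your proof is correct and follows essentially the same route as the paper: induction on the rank (equivalently, the number of hyperplanes), with the base case $\rk(B)=2$ handled by Proposition \ref{section:application:propranktwo} and the inductive step by the reduced deletion-restriction long exact sequence. The only difference is one of exposition: the paper simply asserts that the deletion $B_a$ and restriction $B^a$ are again Boolean of rank $\rk(B)-1$, while you spell out the verification — for $B_a$ by observing that $A\setminus\{a\}$ inherits independence and invoking Proposition \ref{lemma:independent_is_boolean}, and for $B^a$ by checking that $A^a=\{a\vee b:b\in A\setminus\{a\}\}$ is an independent atom set in the arrangement lattice $B^a$. That check is correct and is precisely the point the paper leaves implicit, so the extra detail is welcome rather than redundant.
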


\begin{proof}
We use induction on the number $|A|$
of hyperplanes, which in the Boolean case equals the rank $\rk(B)$. 

The base case, $\rk(B) = 2$, follows from Proposition
\ref{section:application:propranktwo}, so 
suppose $\rk(B) > 2$. For any hyperplane $a\in A$ the deletion $B_a$
and restriction $B^a$ are again Boolean, and of rank $\rk(B)
-1$. Thus $\redH *(B_a \kern-1pt\setminus \kern-1pt\0;F)=0$ and 
$\redH *(B^a \kern-1pt\setminus \kern-1pt\0;F)=0$ by
induction. 
The result then follows from the reduced
deletion-restriction long exact sequence.
\qed
\end{proof}

We now state and prove our main application:
  
\begin{theorem}
\label{section:application:theorem1}
Let $L$ be the intersection lattice of a hyperplane
arrangement with $\rk(L)\geq 2$ and let $F$ be the natural
  sheaf on $L$. Then $\redH i (L\kern-1pt\setminus\kern-1pt\0;F)$ is
  trivial when $i\neq \rk(L)-2$ and  
  $$ 
\dim \redH {\rk(L)-2} (L\kern-1pt\setminus\kern-1pt\0;F) = 
(-1)^{\rk(L)-1}\frac{d}{dt}\,\chi(t)\,{\vrule width 0.5pt height 4
   mm depth 2mm}_{\,\,t=1}
$$
where $\chi(t)$ is the characteristic polynomial of $L$.
\end{theorem}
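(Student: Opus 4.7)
The plan is to prove the theorem by strong induction on $|A|$, maintaining the hypothesis $\rk(L)\geq 2$ throughout. The two already-proven special cases serve as the base: Proposition~\ref{section:application:propboolean} covers Boolean $L$ (where $|A|=\rk(L)$), and Proposition~\ref{section:application:propranktwo} covers $\rk(L)=2$. In each case one must also verify that the stated closed form for $\dim\redH{\rk(L)-2}$ matches. For Boolean $L$ of rank $\geq 2$ the characteristic polynomial is $\chi_L(t)=t^{\dim V-\rk(L)}(t-1)^{\rk(L)}$, giving $\chi_L'(1)=0$, consistent with the vanishing homology. For a rank-$2$ lattice with $n$ atoms a direct M\"obius computation yields $\chi_L(t)=t^{\dim V-2}(t^2-nt+n-1)$ and $(-1)^{\rk(L)-1}\chi_L'(1)=n-2$, matching $\dim\redH 0(L\setminus\0;F)=|A|-2$.

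For the inductive step, assume $L$ is non-Boolean with $\rk(L)\geq 3$, and that the theorem holds for every arrangement lattice with fewer than $|A|$ atoms. Corollary~\ref{cor:independent_is_boolean} supplies a dependent atom $a$ such that $L_a$ has $|A|-1$ atoms and rank $\rk(L)$, while $L^a$ (viewed as the arrangement in $a$) has at most $|A|-1$ atoms and rank $\rk(L)-1\geq 2$. The natural sheaf on $L$ restricts to the natural sheaf on each sublattice, so the inductive hypothesis applies: $\redH\blob(L_a\setminus\0;F)$ is concentrated in degree $\rk(L)-2$ with dimension $(-1)^{\rk(L)-1}\chi_{L_a}'(1)$, and $\redH\blob(L^a\setminus a;F)$ is concentrated in degree $\rk(L)-3$ with dimension $(-1)^{\rk(L)-2}\chi_{L^a}'(1)$.

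Feeding these inputs into the reduced deletion-restriction long exact sequence of Corollary~\ref{section3:corollary:deletion.restriction}---whose surjectivity hypothesis is confirmed by Lemma~\ref{section:application:lemmadegreezero} applied to $L^a$---immediately forces $\redH i(L\setminus\0;F)=0$ for all $i\neq \rk(L)-2$, and extracts the short exact sequence
$$
0\to \redH{\rk(L)-2}(L_a\setminus\0;F)\to \redH{\rk(L)-2}(L\setminus\0;F)\to \redH{\rk(L)-3}(L^a\setminus a;F)\to 0.
$$
Adding dimensions and invoking the classical identity $\chi_L(t)=\chi_{L_a}(t)-\chi_{L^a}(t)$ from \cite{Orlik-Terao92}*{Theorem 2.56} (which differentiates to $\chi_L'(1)=\chi_{L_a}'(1)-\chi_{L^a}'(1)$) yields $\dim\redH{\rk(L)-2}(L\setminus\0;F)=(-1)^{\rk(L)-1}\chi_L'(1)$, closing the induction.

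The main obstacle I anticipate is chiefly bookkeeping: tracking the sign conventions $(-1)^{\rk(L)-1}$ as they combine across the short exact sequence with the classical characteristic-polynomial deletion-restriction identity. A minor subtlety is that when $\rk(L)=3$, the restriction $L^a$ has rank $2$, so its homology is supplied by Proposition~\ref{section:application:propranktwo} rather than by further induction; this confirms that the two base cases cover every leaf of the induction tree.
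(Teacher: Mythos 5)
Your proposal is correct and follows essentially the same strategy as the paper: strong induction on the number of atoms $|A|$, using the rank-2 and Boolean cases as base cases, Corollary~\ref{cor:independent_is_boolean} to produce a dependent atom, and the reduced deletion-restriction long exact sequence (Corollary~\ref{section3:corollary:deletion.restriction}) to transfer both the vanishing and the dimension formula from $L_a$ and $L^a$ to $L$. The only cosmetic difference is that the paper packages the dimension count via an abstract uniqueness argument (characterising the quantity $\theta(L)$ by three axioms and verifying both sides satisfy them), whereas you add dimensions directly across the short exact sequence using $\chi_L = \chi_{L_a} - \chi_{L^a}$; these are logically the same computation.
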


\begin{proof}
If $L$ has rank $2$ and $\dim V = n$ then the characteristic polynomial is
$$
\chi(t) = \sum_{x\in L }\mu(\0, x) t^{\dim x} = t^n - |A| t^{n-1} + (|A| -1) t^{n-2} 
$$
and we easily calculate
 $$ 
(-1)^{\rk(L)-1}\frac{d}{dt}\,\chi(t)\,{\vrule width 0.5pt height 4
   mm depth 2mm}_{\,\,t=1} =  |A| -2.
$$
This, and Proposition \ref{section:application:propranktwo}, proves the
theorem for rank 2 lattices. 
 
If $L$ is Boolean of rank $r> 2$ and $\dim V = n \geq r$, then
the characteristic polynomial is
\begin{equation*}
%  \label{eq:5}
\chi(t) = t^{n-r}(t-1)^r. 
\end{equation*}
The derivative of $\chi(t)$ vanishes at $t=1$, so this and 
Proposition \ref{section:application:propboolean} prove the theorem
for Boolean lattices.

We now proceed by induction on the number $|A|$ of hyperplanes,
and where $\rk L \geq 3$.  
If $|A|=2$ then $\rk(L)\leq 2$, so we take as our base case $|A|=3$:

\paragraph{-- The base case $|A|=3$.} 
As $\rk(L)\geq 3$, then 
\S\ref{section1:subsection2} shows
that the only possibility for $L$ is that it be Boolean of rank $3$,
and the theorem has already been proved in this case.  

\paragraph{-- The vanishing degrees when $|A|>3$.} 
We may assume that $L$ is non-Boolean of rank $\geq 3$ and $|A|>3$
-- though being non-Boolean is not part of the inductive
hypothesis.  

Corollary \ref{cor:independent_is_boolean}
guarantees that the non-Boolean $L$ has
a dependent atom $a\in A$, so the
deletion $L_a$ is an arrangement lattice with $|A|-1$
hyperplanes and $\rk(L_a)=\rk(L)\geq 3$. Thus, the inductive
hypothesis, and hence the result, holds 
for $L_a$. 

Corollary \ref{cor:independent_is_boolean} again gives the
restriction $L^a$ is an arrangement lattice with at most $|A|-1$ 
hyperplanes and $\rk(L^a)=\rk(L)-1$. If $\rk(L)=3$ then the result 
holds for $L^a$ by Proposition \ref{section:application:propranktwo}. If $\rk(L)>3$
then $\rk(L^a)\geq 3$, and there must be at least 3 hyperplanes;
the result then holds for $L^a$ by induction. 

The reduced deletion-restriction long exact sequence
$$
\cdots
\ra
\redH i(L_a \setminus \0;F)
\ra
\redH i(L\setminus \0;F)
\ra
\redH {i-1} (L^a\setminus a; F)
\ra
\cdots
$$
then has $\redH i(L_a \setminus \0;F)$ trivial for $i\neq\rk(L)-2$ and
$\redH{i-1}(L^a \setminus \0;F)$ trivial for  $i-1\neq \rk(L^a)-2$, or
equivalently, for $i\neq\rk(L)-2$.  Thus, 
$\redH i(L\setminus\0;F)=0$ for $i\neq\rk(L)-2$.

\paragraph{-- The dimension in degree $\rk(L)-2$.}
Let $\theta$ be an integer-valued function, defined on arrangement
lattices of rank $\geq 2$, that satisfies the following three
properties: 
\begin{enumerate}
\item[(1)] $\theta(L) = |A| -2$, if $L$ is a rank 2 lattice with $|A|$ atoms;
\item[(2)]  $\theta(L) = 0$, if $L$ is Boolean;
\item[(3)]  $ \theta (L)  = \theta (L_a) + \theta (L^a)$, where $a$ is
  a dependent atom in $L$. 
\end{enumerate}

If such a function exists it is unique: indeed by Corollary
\ref{cor:independent_is_boolean} we may continue to apply the
recursive relation (3) until we find Boolean lattices -- whose values
are given by (2) -- or rank 2 lattices, whose values are given by
(1).  

Let 
$$
\Phi(L) = \dim \redH {\kern-1pt {\rk(L)-2}}(L\kern-1pt\setminus\kern-1pt\0;F).
$$
We claim that $\Phi$ satisfies (1), (2) and (3) above. 
Courtesy of Proposition
\ref{section:application:propranktwo}, we have $\Phi(L)=|A|-2$ when 
$L$ has rank $2$ -- hence (1) -- and Proposition
\ref{section:application:propboolean} gives $\Phi(L) = 0$ for
Booleans, so (2) is also satisfied. The vanishing degrees above
leaves only the short exact fragment:
$$
0
\ra
\redH {\rk(L_a)-2} (L_a \setminus \0;F)
\ra
\redH {\rk(L)-2} (L\setminus \0;F)
\ra
\redH {\rk(L^a)-2} (L^a\setminus a; F)
\ra
0
$$
of the deletion-restriction long exact sequence. 
We immediately see that $\Phi$ satisfies (3).

Now define 
$$
\Psi(L) = (-1)^{\rk(L)-1}\frac{d}{dt}\,\chi(t)\,{\vrule width 0.5pt height 4
   mm depth 2mm}_{\,\,t=1}
$$
We have already calculated $\Psi(L)$  at the beginning of the proof
for rank two lattices and for Booleans, showing $\Psi$ satisfies
(1) and (2) above. Furthermore, the characteristic
polynomial satisfies the relation:
$$
\chi_L(t) = \chi_{L_a}(t) - \chi_{L^a}(t)
$$
from which it follows that
$$
(-1)^{\rk(L) -1}\chi_L(t) = (-1)^{\rk(L_a) -1} \chi_{L_a}(t) + (-1)^{\rk(L^a) -1} \chi_{L^a}(t).
$$
Differentiating and specialising to $t=1$ shows that $\Psi$ also
satisfies (3). By uniqueness we conclude that $\Phi = \Psi$, giving
the dimension in degree 
$\rk L -2$ to be as claimed. 
\qed
\end{proof}

%%%%%%%%%%%%%%%%%%%%%%%%%%%%%%%%%%%%%%%%%%%%%%%%%%%%%

 \subsection{Unreduced homology}
\label{subsection:application:unreduced}

It is easy to compute unreduced homology from the above. 
Reduced and unreduced only
differ in degree zero where we have a short exact
sequence 
$$
0\ra \redH 0 (L\kern-1pt\setminus\kern-1pt\0;F) \ra \HS 0
(L\kern-1pt\setminus\kern-1pt\0;F) \ra V \ra 0. 
$$
We immediately get
\begin{proposition} 
\label{section:application:theorem1:unreduced}
Let $L$ be the intersection lattice of a hyperplane
arrangement with $\rk(L)\geq 2$ and let $F$ be the natural
sheaf on $L$. Then $\HS i (L\kern-1pt\setminus\kern-1pt\0;F)$ is
trivial when $i\neq 0$ or $\rk(L)-2$. Moreover,
\begin{description}
\item[--] If $\rk L >2$ we have $ \HS 0
  (L\kern-1pt\setminus\kern-1pt\0;F) \cong   V $ and the potentially 
non-trivial group in degree $ \rk(L)-2 $ has the dimension given in
Theorem \ref{section:application:theorem1}. 
\item[--] If $\rk L = 2$ we have
$\dim  \HS 0 (L\kern-1pt\setminus\kern-1pt\0;F) = |A| - 2 + \dim V.$
\end{description}
\end{proposition}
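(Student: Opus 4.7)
The plan is to derive this proposition directly from Theorem \ref{section:application:theorem1} by controlling the single degree where reduced and unreduced homology differ. Since $\widetilde{S}_*$ differs from $S_*$ only by an augmentation in degree $0$, we have $\redH i(L\setminus\0;F) = \HS i(L\setminus\0;F)$ for all $i > 0$, and in degree $0$ the reduced homology is defined as $\ker(\epsilon_*\colon \HS 0(L\setminus\0;F) \to F(\0)) = \ker(\epsilon_*\colon \HS 0(L\setminus\0;F) \to V)$.

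First I would invoke Lemma \ref{section:application:lemmadegreezero}, which states that when $\rk(L)\geq 2$ the augmentation $\epsilon_*\colon \HS 0(L\setminus\0;F) \to V$ is surjective. Combined with the definition of reduced homology, this yields the short exact sequence
$$
0 \ra \redH 0(L\setminus\0;F) \ra \HS 0(L\setminus\0;F) \ra V \ra 0,
$$
which identifies $\HS 0$ as an extension of $V$ by $\redH 0$. Since the sequence is of vector spaces it splits, giving $\dim \HS 0(L\setminus\0;F) = \dim \redH 0(L\setminus\0;F) + \dim V$.

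Next I would split into the two cases of the proposition. If $\rk(L) > 2$, then $\rk(L)-2 \geq 1 > 0$, so Theorem \ref{section:application:theorem1} forces $\redH 0(L\setminus\0;F)=0$; hence $\HS 0(L\setminus\0;F) \cong V$, and the only potentially nonzero group in positive degree is $\HS {\rk(L)-2}(L\setminus\0;F) = \redH {\rk(L)-2}(L\setminus\0;F)$, whose dimension is computed by the theorem. If $\rk(L) = 2$, then $\rk(L)-2 = 0$, all higher degrees vanish by Theorem \ref{section:application:theorem1}, and the short exact sequence combined with $\dim\redH 0(L\setminus\0;F) = |A|-2$ (from Proposition \ref{section:application:propranktwo}, which underlies the theorem in this case) gives $\dim \HS 0(L\setminus\0;F) = |A|-2+\dim V$.

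There is essentially no obstacle here: the proof is a bookkeeping exercise built on Theorem \ref{section:application:theorem1} and the surjectivity in Lemma \ref{section:application:lemmadegreezero}. The only subtlety worth flagging is remembering that when $\rk(L)=2$ the top nonvanishing degree and degree $0$ coincide, so one must be careful not to double count or to mistakenly add $\dim V$ twice.
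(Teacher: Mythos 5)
Your proof is correct and takes essentially the same approach as the paper: the authors also observe that reduced and unreduced homology coincide in positive degree and differ in degree zero only via the short exact sequence $0 \to \redH 0(L\setminus\0;F) \to \HS 0(L\setminus\0;F) \to V \to 0$, which follows from the surjectivity in Lemma \ref{section:application:lemmadegreezero}, and then read off the two cases from Theorem \ref{section:application:theorem1}.
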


%%%%%%%%%%%%%%%%%%%%%%%%%%%%%%%%%%%%%%%%%%%%%%%%%%%%%

 \subsection{Generalising a result of Lusztig}
\label{subsection:application:lusztig}

When using constant coefficients, the homology of a poset with a
maximum is concentrated in degree zero for general reasons (see Lemma
\ref{lemma:extrema}). To avoid this collapse the maximum is normally
removed before taking homology. The same is true when the poset has a
minimum. For a more general sheaf the presence of a maximum does not \emph{a
priori\/} concentrate the homology in this way. Nonetheless, for
consistency it is tempting to remove the maximum in this case too, as
in the following celebrated result of Lusztig
\cite{Lusztig74}*{Theorem 1.12}.  

\begin{theorem}{\rm\bf(Lusztig)}
 \label{theorem:application:lusztig}
 Let $V$ be a vector space
over a finite field of dimension $\geq 3$ and let $A$ be the arrangement consisting of all 
the hyperplanes in $V$. Let $L=L(A)$ be the associated arrangement
lattice and $F$ be the
natural sheaf. Then $\HS {i}(L\kern-1pt\setminus\kern-1pt\0,\1;F)$
vanishes in the degrees 
$0<i<\rk(L)-2$ and $ \HS 0 (L\kern-1pt\setminus\kern-1pt\0,\1;F)\cong V $.
\end{theorem}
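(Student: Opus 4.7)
The plan is to derive Lusztig's theorem directly from the unreduced homology computation of Proposition~\ref{section:application:theorem1:unreduced}, using Lemma~\ref{lemma:extrema}(4) to handle the removal of the maximum. Almost all the work has already been done; what remains is a short reduction.

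First I would observe that in Lusztig's setting, where $A$ is the collection of \emph{all} hyperplanes in the finite dimensional vector space $V$ over a finite field (so $A$ is finite), the intersection $\1=\bigcap_{a\in A}a$ is the zero subspace of $V$. Indeed, given any nonzero $v\in V$, extending $v$ to a basis produces a hyperplane avoiding $v$, so no nonzero vector lies in every hyperplane. For the natural sheaf $F$ this says $F(\1)=0$. Moreover the arrangement is essential, so $\rk(L)=\dim V\geq 3$.

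Applying Lemma~\ref{lemma:extrema}(4) to the poset $L\setminus\0$, whose maximum is $\1$, and using $F(\1)=0$, gives an isomorphism
$$
\HS *(L\kern-1pt\setminus\kern-1pt\0,\1;F)\;\cong\;\HS *(L\kern-1pt\setminus\kern-1pt\0;F).
$$
Proposition~\ref{section:application:theorem1:unreduced} then tells us that the right-hand side vanishes outside degrees $0$ and $\rk(L)-2$, and, since $\rk(L)>2$, that $\HS 0(L\setminus\0;F)\cong V$. Combining these two facts yields both the vanishing in the range $0<i<\rk(L)-2$ and the identification $\HS 0(L\setminus\0,\1;F)\cong V$ demanded by the theorem.

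The main obstacle is not in this reduction but in the earlier Proposition~\ref{section:application:theorem1:unreduced} itself, whose proof rests on the inductive deletion-restriction argument for the reduced homology in Theorem~\ref{section:application:theorem1}. Once those results are in hand, Lusztig's theorem falls out almost for free; in fact Theorem~\ref{section:application:theorem1} gives the extra information that $\dim\HS {\rk(L)-2}(L\setminus\0,\1;F)=(-1)^{\rk(L)-1}\chi'(1)$, sharpening Lusztig's original statement.
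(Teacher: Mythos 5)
Your proposal is correct and matches the paper's own derivation: both deduce $F(\1)=0$ from essentiality, invoke Lemma~\ref{lemma:extrema}(4) to identify $\HS *(L\setminus\0,\1;F)$ with $\HS *(L\setminus\0;F)$, and then read off the result from Proposition~\ref{section:application:theorem1:unreduced}. The supporting observation that extending a nonzero vector to a basis produces a hyperplane missing it is a perfectly adequate justification for essentiality in this setting.
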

 
In this section we make explicit the connection between our
Theorem  \ref{section:application:theorem1} and Lusztig's result. In
particular we describe $\HS *(L\kern-1pt\setminus\kern-1pt\0,\1;F)$
for any arrangement lattice $L$ 
equipped with the natural sheaf $F$.

Recall \S\ref{section1:subsection2}
that an arrangement is essential when $\bigcap_{a\in A} a=0$. In
particular, for $F$ the natural sheaf on $L$, we have $F(\1)=0$, and
so by Lemma \ref{lemma:extrema} part 4 we get
$\HS *(L\kern-1pt\setminus\kern-1pt\0,\1;F)
\cong \HS *(L\kern-1pt\setminus\kern-1pt\0;F)$. As the arrangement in
Lusztig's result is essential, Theorem
\ref{theorem:application:lusztig} follows immediately from 
Theorem \ref{section:application:theorem1}
and Proposition \ref{section:application:theorem1:unreduced}.
In fact we
get more than is claimed in Theorem \ref{theorem:application:lusztig}
as we give the dimension of the top degree homology as well.  

We are also interested in non-essential hyperplane
arrangements, where $\bigcap_{a\in A} a\not=0$. The following recasts our
Theorem \ref{section:application:theorem1} in a way that it can be
directly seen as a generalisation of Lusztig's result. 

\begin{theorem} 
\label{section:application:generalisation}
Let $L$ be the intersection lattice of a hyperplane
arrangement $A$ in the vector space $V$ and let 
$U=\bigcap_{a\in A} a$. Suppose that
$\rk(L)\geq 3$  and let $F$ be the natural sheaf on $L$. 
Then $\HS i (L\kern-1pt\setminus\kern-1pt\0, \1;F)$ vanishes in degrees
$0<i<\rk(L)-2$ with $\HS 0 (L\kern-1pt\setminus\kern-1pt\0,\1;F)\cong
V \oplus U$ and
 $$
\dim \HS {\rk(L)-2} (L\kern-1pt\setminus\kern-1pt\0,\1;F) = 
(-1)^{\rk(L)-1}\frac{d}{dt}\,\chi(t)\,{\vrule width 0.5pt height 4
   mm depth 2mm}_{\,\,t=1} + |\mu(\0, \1)|\dim U.
$$
\end{theorem}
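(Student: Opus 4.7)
The plan is to compare $\HS{*}(L\setminus\{\0,\1\}; F)$ with the known $\HS{*}(L\setminus\0; F)$ of Proposition \ref{section:application:theorem1:unreduced} via the short exact sequence of chain complexes
$$
0 \to T_{\kern-1pt *}(L\setminus\{\0,\1\}; F) \to T_{\kern-1pt *}(L\setminus\0; F) \to Q_{\kern-1pt *} \to 0
$$
arising from the subposet inclusion. Writing $r = \rk(L)$, the quotient $Q_{\kern-1pt *}$ has a basis consisting of those non-degenerate simplices of $T_{\kern-1pt *}(L\setminus\0; F)$ that involve $\1$. Since $\1$ is maximal, every such simplex has the form $\sigma = x_n < x_{n-1} < \cdots < x_1 < \1$ with $x_i \in L\setminus\{\0,\1\}$, and the coefficient module at $\sigma$ is $F(\1) = U$.

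The key observation is that $Q_{\kern-1pt *}$ is, up to an overall shift and sign, the augmented simplicial chain complex of the order complex of $L\setminus\{\0,\1\}$ with coefficients in $U$. Indeed, applying (\ref{eq:6}) to $\sigma$ as above, the $F^{\1}_{x_1}(s)_{d_0 \sigma}$ term is supported on $d_0\sigma = x_n < \cdots < x_1$, which no longer involves $\1$ and so lives entirely in the sub-complex; this term therefore vanishes in $Q_{\kern-1pt *}$. A comparison of the surviving terms with the standard simplicial differential then yields a chain isomorphism $Q_{\kern-1pt *} \cong \widetilde{C}_{*-1}(|L\setminus\{\0,\1\}|) \otimes U$. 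Since an arrangement lattice is geometric, Folkman's theorem (\cites{Folkman66, Bjorner82}) applies: $|L\setminus\{\0,\1\}|$ is homotopy equivalent to a wedge of $|\mu(\0,\1)|$ spheres of dimension $r-2$. Consequently $H_n(Q_{\kern-1pt *}) \cong \widetilde{H}_{n-1}(|L\setminus\{\0,\1\}|; U)$ vanishes except at $n = r - 1$, where it equals $U^{|\mu(\0,\1)|}$, a vector space of dimension $|\mu(\0,\1)|\dim U$.

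Feeding this into the long exact sequence of the short exact sequence of chain complexes, and using that $\HS{n}(L\setminus\0; F)$ vanishes outside $n = 0, r-2$ by Proposition \ref{section:application:theorem1:unreduced}, I can read off the conclusion. Intermediate degrees $0 < i < r - 2$ are sandwiched by zero terms, forcing the stated vanishing; the top degree sits in a short exact sequence
$$
0 \to H_{r-1}(Q_{\kern-1pt *}) \to \HS{r-2}(L\setminus\{\0,\1\}; F) \to \HS{r-2}(L\setminus\0; F) \to 0
$$
which combined with Theorem \ref{section:application:theorem1} yields the dimension formula by additivity; and the degree-zero description follows from the tail of the long exact sequence together with the vanishing of $H_0(Q_{\kern-1pt *})$ and $H_1(Q_{\kern-1pt *})$, which holds once $r \geq 3$. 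The main obstacle is the chain-level identification of $Q_{\kern-1pt *}$ with the shifted augmented order complex: the underlying module identification in each degree is transparent, but matching the simplicial boundary signs with those induced by (\ref{eq:6}) requires careful bookkeeping (the two differ by a global degree-dependent sign that must be absorbed into a chain isomorphism). A subsidiary subtlety is the corner case $r = 3$, where several of the relevant degrees coincide; but the same long exact sequence argument goes through, provided one analyses the combined fragment carefully at $n = r - 2 = 1$.
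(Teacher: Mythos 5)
Your decomposition is the same as the paper's: the quotient $Q_{\kern-1pt *}$ coincides with the complex $S_*(L\kern-1pt\setminus\kern-1pt\0;G)$ that the paper obtains from the short exact sequence of sheaves $0 \to F' \to F \to G \to 0$, where $F'(\1)=0$ and $G$ is supported at $\1$ with $G(\1)=U$; working at the chain level as you do is an equivalent packaging. The substantive problem is that your own computation of $H_1(Q_{\kern-1pt *})$ contradicts the degree-zero claim of the theorem. You correctly observe that $Q_{\kern-1pt *}$ is the shifted \emph{augmented} simplicial chain complex of $|L\kern-1pt\setminus\kern-1pt\0,\1|$ with coefficients in $U$: the shift is right precisely because $Q_0 = F(\1)$ sits over the singleton simplex $\{\1\}$ of $|L\kern-1pt\setminus\kern-1pt\0|$, which corresponds to the empty simplex of $|L\kern-1pt\setminus\kern-1pt\0,\1|$. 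Consequently $H_n(Q_{\kern-1pt *}) \cong \redH{n-1}(|L\kern-1pt\setminus\kern-1pt\0,\1|;U)$, and Folkman gives $H_1(Q_{\kern-1pt *}) = \redH{0} = 0$ once $\rk(L)\geq 3$. But with both $H_0(Q_{\kern-1pt *})$ and $H_1(Q_{\kern-1pt *})$ vanishing, the tail of the long exact sequence forces $\HS{0}(L\kern-1pt\setminus\kern-1pt\0,\1;F)\cong\HS{0}(L\kern-1pt\setminus\kern-1pt\0;F)\cong V$, \emph{not} $V\oplus U$. Your sentence asserting that ``the degree-zero description follows from $\ldots$ the vanishing of $H_0(Q_{\kern-1pt *})$ and $H_1(Q_{\kern-1pt *})$'' is therefore wrong as stated: those two vanishings prove $\HS{0}\cong V$, and the $U$ summand never appears.

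For comparison, the paper's proof asserts $\HS{1}(L\kern-1pt\setminus\kern-1pt\0;G) \cong \HS{0}(L\kern-1pt\setminus\kern-1pt\0,\1;\Delta U) = U$, that is, the \emph{un}reduced $H_0$; it is exactly that extra $U$ which feeds the summand into degree zero. The difference is whether $Q_0 = U$ is read as a degree-$(-1)$ augmentation term or as a genuine degree-$0$ chain group. Your reading (augmentation) is the one dictated by the chain-level structure of $Q_{\kern-1pt *}$, and a direct colimit computation in a small example corroborates it: for the coordinate arrangement of three hyperplanes in $V=k^4$ one has $L\cong B(3)$, $\rk(L)=3$, $\dim U = 1$, and $\varinjlim^{L\setminus\{\0,\1\}}\kern-1pt F$ has dimension $4 = \dim V$, not $5 = \dim(V\oplus U)$. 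So the plan of your proposal is correct, but rather than establishing the theorem's degree-zero statement it exposes an apparent error in that statement; you should resolve the contradiction explicitly rather than asserting the conclusion ``follows.''
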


\begin{proof}
Define a new sheaf $F^\prime$ on $L\kern-1pt\setminus\kern-1pt\0$ by
$$
F^\prime (x) = 
\begin{cases}
0, & x=\1 \\
F(x), & x\neq \1
\end{cases}
$$
with obvious structure maps induced from $F$. As $F'$ is essential,
Lemma \ref{lemma:extrema} part 4 gives
$$
\HS *(L\kern-1pt\setminus\kern-1pt\0;F^\prime)
\cong 
\HS *(L\kern-1pt\setminus\kern-1pt\0,\1;F^\prime) 
=
\HS * (L\kern-1pt\setminus\kern-1pt\0,\1;F) 
$$

To prove the result we must therefore compute $\HS *
(L\kern-1pt\setminus\kern-1pt\0;F^\prime)$. 
There is a short exact sequence of sheaves  
$$
0\ra F^\prime \ra F \ra G \ra 0
$$
where $G$ is the sheaf on
$L\kern-1pt\setminus\kern-1pt\0$ defined by $G(\1) = U$ and $G(x)
= 0$ otherwise. 
By (\ref{eq:10})  this gives a long exact sequence of homology groups
$$
\cdots \ra\HS {i+1} (L\kern-1pt\setminus\kern-1pt\0; G)\ra \HS i
(L\kern-1pt\setminus\kern-1pt\0; F^\prime) \ra \HS i
(L\kern-1pt\setminus\kern-1pt\0; F)\ra\HS i
(L\kern-1pt\setminus\kern-1pt\0; G)
\ra
\HS{i-1}(L\kern-1pt\setminus\kern-1pt\0; F')
\ra  \cdots 
$$
%Proposition \ref{section:application:theorem1:unreduced} gives us the groups 
We can identify the complex $S_*(L\kern-1pt\setminus\kern-1pt\0; G)$
with the complex  
$S_{*-1}(L\kern-1pt\setminus\kern-1pt\0, \1; \Delta U)$, and we have
$\HS i
(L\kern-1pt\setminus\kern-1pt\0; G) = \HS {i-1}
(L\kern-1pt\setminus\kern-1pt\0, \1; \Delta U)$,
so that in particular $\HS 0 (L\kern-1pt\setminus\kern-1pt\0; G)=0$. 
The homology groups 
$\HS * (L\kern-1pt\setminus\kern-1pt\0, \1; \Delta U)$
are well known (\cites{Folkman66,Bjorner82,Orlik-Terao92}) and it follows that 
$$
\HS i (L\kern-1pt\setminus\kern-1pt\0; G) \cong \HS {i-1}
(L\kern-1pt\setminus\kern-1pt\0, \1; \Delta U) \cong 
\begin{cases}
U^{|\mu(\0, \1)|}, & i=\rk L -1 \\
U, & i = 1\\
0, & \text{ otherwise.}
\end{cases}
$$
From this, Proposition \ref{section:application:theorem1:unreduced}
and the long exact sequence above, we immediately get $\HS i
(L\kern-1pt\setminus\kern-1pt\0;F^\prime)$ 
  vanishes in the degrees
$0<i<\rk(L)-2$. In low degree and top degree we get short exact
sequences from which the homology in degree zero and 
$\rk L -2$ are easily shown to be as claimed. 
\qed
\end{proof}

%%%%%%%%%%%%%%%%%%%%%%%%%%%%%%%%%%%%%%%%%%%%%%%%%%%%%

 \subsection{Cellular homology and broken circuits}
\label{subsection:brokenciruits}

To a geometric lattice $L$ one can
associate a simplicial complex $BC(L)$, the {\em broken circuit
  complex} (see \cites{MR0453579,MR468931,MR2383131})
that encodes some of 
the combinatorial geometry of the lattice. In this section we outline
some connections between $BC(L)$ and the sheaf homology of $L$.
%homologies
%$H_*(L\setminus\0;F)$, for various $F$. 
Our description of $BC(L)$
follows \cite{MR2383131}*{Lectures 3-4}.
% For example, the
% coefficients of the characteristic polynomial can be interpreted in
% terms of numbers of simplices in $BC(L)$.

Let $L$ be a geometric lattice and fix a total ordering
$\dashv$ of the atoms $A$. Label the covering relation $\prec$,
using the atoms, by defining
\begin{equation}
  \label{eq:1}
\lambda(x\prec y)=a\in A  
\end{equation}
where $a$ is the maximum atom, with respect to the total order $\dashv$,
with the property that $x\vee a=y$.  The function (\ref{eq:1}) is an
example of a $\lambda$-labelling. The
\emph{broken circuit complex\/} $BC(L)$ has vertices $A$ and
$r$-simplicies the $\{a_{i_0},\ldots,a_{i_r}\}\subseteq A$
whenever there is a saturated chain
\begin{equation}
  \label{eq:2}
\ss=\0\prec x_0\prec x_1\prec\cdots\prec x_r=x  
\end{equation}
with $\lambda(\0,x_0)=a_{i_0}, \lambda(x_0,x_1)=a_{i_1},\ldots,
\lambda(x_{r-1},x_r)=a_{i_r}$ and $a_{i_0}\dashv a_{i_1}\dashv\cdots\dashv a_{i_r}$.
Such a chain is said to be \emph{$\lambda$-increasing\/}. The resulting $BC(L)$
depends on the choice of total order $\dashv$, but it turns out that
its homotopy type does not.
Figure \ref{fig:BCcomplex} illustrates these ideas for the partition
lattice $\Pi(4)$.

The number of $\lambda$-increasing chains (\ref{eq:2}) is equal to
$(-1)^{\rk(x)}\mu(\0,x)$ and $\{a_{i_0},\ldots,a_{i_r}\}$ is a
so-called ``no-broken-circuit base'' for the interval $L_{\leq x}$.
It is not hard to see that
$BC(L)$ is a pure $(\rk(L)-1)$-dimensional simplicial complex, and that any
maximal dimensional simplex contains the vertex that is maximal in the total
ordering $\dashv$ of $A$. In particular, $BC(L)$ is a cone whose base is called
the \emph{reduced
broken circuit complex\/} $\widetilde{BC}(L)$.

\begin{figure}
  \centering
\begin{pspicture}(0,0)(14,8.5)
%\showgrid
  \rput(7,4.25){\BoxedEPSF{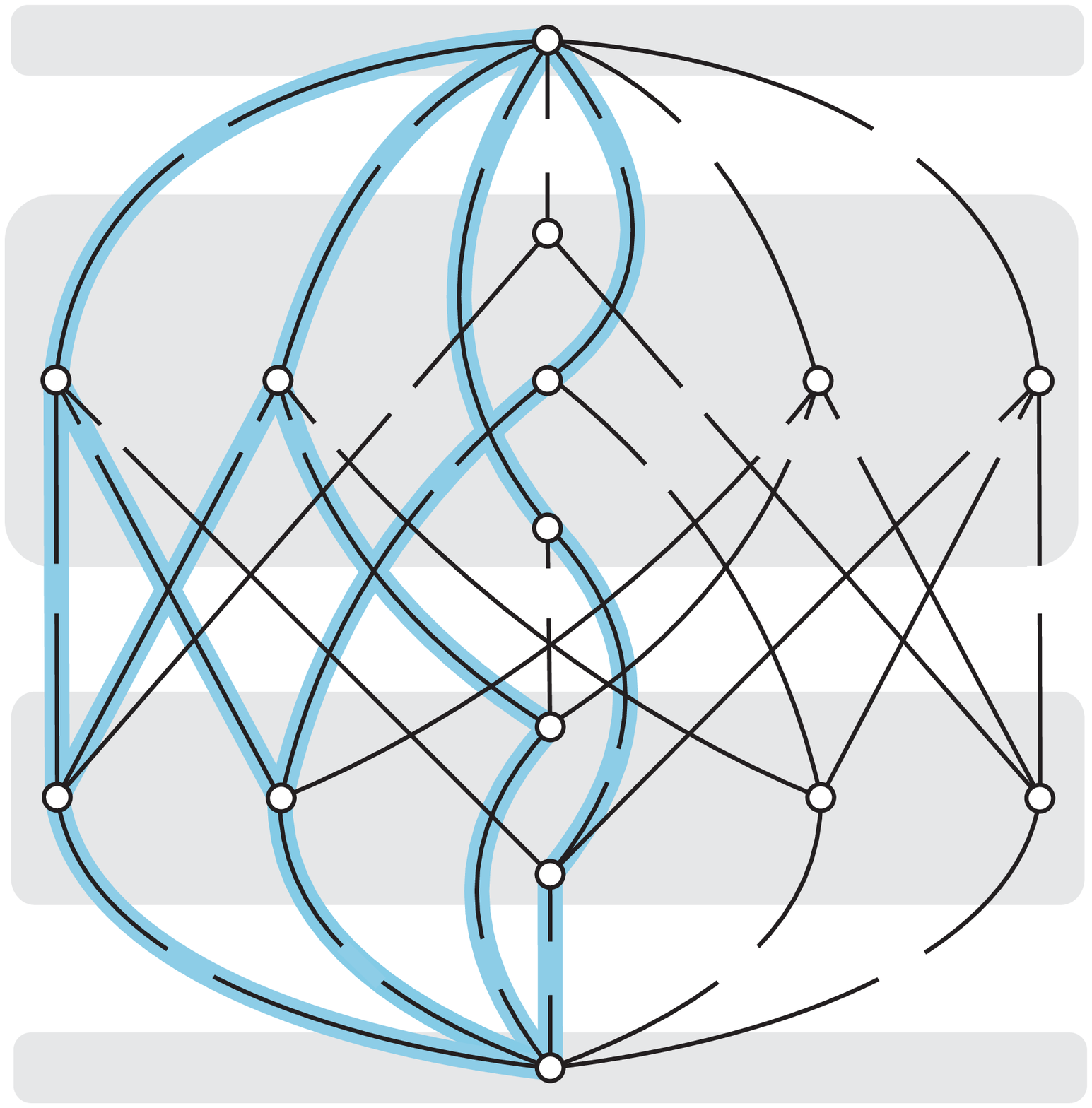 scaled 400}}

\rput(2.5,8.25){rank}
\rput(2.5,7.7){$3$}
\rput(2.5,5.5){$2$}
\rput(2.5,2.6){$1$}
\rput(2.5,0.8){$0$}

\rput(3.95,2.6){${\scriptstyle{\red a_1}}$}
\rput(5.5,2.55){${\scriptstyle{\red a_2}}$}
\rput(7.3,2.05){${\scriptstyle{\red a_3}}$}
\rput(7.05,2.85){${\scriptstyle{\red a_4}}$}
\rput(9.15,2.6){${\scriptstyle{\red a_5}}$}
\rput(10.65,2.6){${\scriptstyle{\red a_6}}$}

\rput(7,0.5){${\scriptstyle{\blue 1}}$}
\rput(3.4,2.6){${\scriptstyle{\blue -1}}$}
\rput(4.9,2.6){${\scriptstyle{\blue -1}}$}
\rput(6.75,2.05){${\scriptstyle{\blue -1}}$}
\rput(6.7,3.1){${\scriptstyle{\blue -1}}$}
\rput(8.6,2.575){${\scriptstyle{\blue -1}}$}
\rput(10.1,2.6){${\scriptstyle{\blue -1}}$}

\rput(3.4,5.45){${\scriptstyle{\blue 2}}$}
\rput(4.9,5.45){${\scriptstyle{\blue 2}}$}
\rput(8.6,5.45){${\scriptstyle{\blue 2}}$}
\rput(10.1,5.45){${\scriptstyle{\blue 2}}$}

\rput(6.8,4.4){${\scriptstyle{\blue 1}}$}
\rput(6.8,5.55){${\scriptstyle{\blue 1}}$}
\rput(6.8,6.45){${\scriptstyle{\blue 1}}$}

\rput(7,8){${\scriptstyle{\blue -6}}$}

\rput(4.625,7.05){${\scriptscriptstyle{\red 6}}$}
\rput(5.975,7.05){${\scriptscriptstyle{\red 6}}$}
\rput(6.625,7.05){${\scriptscriptstyle{\red 6}}$}
\rput(7,7.05){${\scriptscriptstyle{\red 5}}$}
\rput(7.45,7.05){${\scriptscriptstyle{\red 6}}$}
\rput(8.05,7.05){${\scriptscriptstyle{\red 5}}$}
\rput(9.35,7.05){${\scriptscriptstyle{\red 4}}$}

\rput(6,5.3){${\scriptscriptstyle{\red 6}}$}
\rput(8,5.3){${\scriptscriptstyle{\red 1}}$}

\rput(3.875,5.05){${\scriptscriptstyle{\red 3}}$}
\rput(4.05,5.05){${\scriptscriptstyle{\red 2}}$}
\rput(4.975,5.05){${\scriptscriptstyle{\red 5}}$}
\rput(5.3,5.05){${\scriptscriptstyle{\red 5}}$}
\rput(5.5,5.05){${\scriptscriptstyle{\red 4}}$}
\rput(8.5,5.05){${\scriptscriptstyle{\red 6}}$}
\rput(8.7,5.05){${\scriptscriptstyle{\red 6}}$}
\rput(9.05,5.05){${\scriptscriptstyle{\red 4}}$}
\rput(9.95,5.05){${\scriptscriptstyle{\red 6}}$}
\rput(10.15,5.05){${\scriptscriptstyle{\red 6}}$}

\rput(6.3,4.75){${\scriptscriptstyle{\red 5}}$}
\rput(7.725,4.75){${\scriptscriptstyle{\red 2}}$}

\rput(3.65,4){${\scriptscriptstyle{\red 3}}$}
\rput(7.025,4){${\scriptscriptstyle{\red 3}}$}
\rput(10.35,4){${\scriptscriptstyle{\red 5}}$}

\rput(7.45,2.8){${\scriptscriptstyle{\red 4}}$}

\rput(4.575,1.475){${\scriptscriptstyle{\red 1}}$}
\rput(5.75,1.45){${\scriptscriptstyle{\red 2}}$}
\rput(7.025,1.45){${\scriptscriptstyle{\red 3}}$}
\rput(6.625,1.45){${\scriptscriptstyle{\red 4}}$}
\rput(8.3,1.45){${\scriptscriptstyle{\red 5}}$}
\rput(9.45,1.45){${\scriptscriptstyle{\red 6}}$}
\end{pspicture}
\caption{The partition lattice $\Pi(4)$ -- adapted from a picture by
  Tilman Piesk \cite{Baez15} -- with the total ordering
  $a_1\dashv a_2\dashv\cdots\dashv a_6$ and the resulting
  $\lambda$-labelling in red; the values of the M\"{o}bius function
  $\mu(\0,x)$ and the $\lambda$-increasing chains
  (\ref{eq:2}) that give the six $2$-simplicies of the $BC$-complex are in blue.}
  \label{fig:BCcomplex}
\end{figure}

Since  $BC(L)$ is a cone, it is contractible, and thus has trivial reduced homology. 
On the other hand the reduced broken circuit complex $\widetilde{BC}(L)$ has
% has the homotopy type of of a
% wedge of spheres of dimension $\rk(L) -2$. The number of spheres is
% precisely Crapo's beta-invariant of $L$, 
%As mentioned in the introduction, this is usually called .
reduced homology
$$
\dim \widetilde{H}_i (\widetilde{BC}(L)) = 
\begin{cases}
\beta(L), & i=\rk(L) -2\\
   0, & \text{otherwise}
\end{cases}
$$
where
$$
\beta(L)= (-1)^{\rk(L)-1}\frac{d}{dt}\,\chi(t)\,{\vrule width 0.5pt height 4
   mm depth 2mm}_{\,\,t=1}
 $$
 is the beta-invariant; see \cites{MR0572989, MR1165544}.
Comparing this to Theorem \ref{section:application:theorem1}, it is
then very natural to ask what, if any, is the relationship
between the sheaf homology of $L$ equipped with the natural sheaf and
the reduced broken circuit complex? We are grateful to the referee for
pointing this out.

It is possible to make a very explicit connection between the
 sheaf homology of $L$ equipped with the {\em constant} sheaf and the
 (un-reduced) broken circuit complex. 
 In order to do this it is most convenient to pass  via the ``cellular
 homology'' of $L$ with coefficients in a sheaf (see
 \cite{EverittTurner15} -- suitably adapted to be homological rather than
 cohomological) and we now briefly explain these ideas.
 
 %Although the homotopy type of $BC(L)$ is uninteresting, the cellular
%approach of \cite{EverittTurner15}, suitably adapted to homology,
%gives a description of the simplicial chain complex of $BC(L)$.

Let $L_0=L\setminus\0$ and $F$ be a sheaf on $L_0$. Filter $L_0$ by
rank, defining $L_0^r=\{x\in L_0:\rk(x)\leq r+1\}$; the ``$+1$'' is
because we are using the $L$-rank function for $L_0$. Then,
$S_*(L_0^{r-1};F)$ is a subcomplex of $S_*(L_0^{r};F)$ with quotient
complex that we denote by $S_*(L_0^r,L_0^{r-1};F)$. The \emph{cellular chain
complex\/} $C_*(L_0;F)$ has chains
$$
C_r(L_0,F)=H_r(L_0^r,L_0^{r-1};F)
$$
and differential $C_r\ra C_{r-1}$ provided by the boundary map that arises in the long exact
sequence of the triple of subposets
$(L_0^{r},L_0^{r-1},L_0^{r-2})$.
By \cite{EverittTurner15}*{Theorem 2 and \S 4.4} the cellular chain
complex computes sheaf homology:
$$
 H_*(L_\0;F)\cong HC_*(L_\0;F).
$$

After a little analysis, the
arguments of \cite{EverittTurner15}*{\S 2 and \S 4.4} can be massaged to show
that
$$
C_r(L_0;F)\cong \bigoplus_{\rk(x)=r} A_x\otimes F(x)
$$
where 
%$A_x$ is the degree $r$ homology of the complex $T_*((L_0)_{\leq
%x},(L_0)_{<x};\Delta\Z)$, with $T_*$ the non-degenerate version of
%$S_*$ as in \S\ref{section2:subsection2}. Further analysis then reveals
%that
$$
A_x\cong \widetilde{H}_{r-1}((L_0)_{<x};\Z)\cong\Z^{|\mu(\0,x)|}
$$
with the middle term the ordinary reduced homology of the nerve of
$(L_0)_{<x}$. This is in turn free of rank the M\"{o}bius function of
$(L_0)_{<x}$ by \cite{Folkman66}.
It is also possible to give an explicit presentation for the abelian
group $A_x$.
%the elements are the cycles in $T_r((L_0)_{\leq
%x}, (L_0)_{<x};\Delta\Z)$ when $x$ has rank $r$. To find examples, 
Let
$b_0,b_1,\ldots,b_{r}$ be a set of linearly independent (in the sense
of \S\ref{section1:subsection3}) atoms for $(L_0)_{\leq x}$ such that
$\bigvee b_i=x$, and
%The $b_0,b_1,\ldots,b_{r}$ provide a basis for the interval $(L_0)_{\leq x}$. 
consider the saturated chain 
%in $(L_0)_{\leq x}$ 
$$
\ss=b_0\prec b_0\vee b_1\prec\cdots\prec
b_0\vee b_1\vee\cdots\vee b_r.
$$
%a saturated chain in $(L_0)_{\leq x}$. 
Any other saturated chain has
the form
$$
\pi(b_0)\prec \pi(b_0)\vee \pi(b_1)\prec\cdots\prec
\pi(b_0)\vee \pi(b_1)\vee\cdots\vee \pi(b_r)
$$
for a unique $\pi\in\Symr$. 
The group $A_x$ is
freely generated by
%Writing $\ss_{\pi}$ for such a chain
%we consider 
elements of the form
%, then
\begin{equation}
  \label{eq:3}
\aa_\ss=\sum_{\pi\in\Symr}(-1)^{\text{sgn}(\pi)} \ss_\pi
\end{equation}
%is a cycle in $T_r((L_0)_{\leq x}, (L_0)_{<x};\Delta\Z)$ 
%and $A_x$ is
%freely generated by the $\aa_\ss$ 
where $\ss$ is
$\lambda$-increasing \cite{Bjorner82}.

\begin{proposition}\label{BChomology}
  If $\ss$ is a $\lambda$-increasing chain as above with
  $\lambda(\0,x_0)=a_{i_0},\ldots,
  \lambda(x_{r-1},x_r)=a_{i_r}$, then the map
  $$
\aa_\ss\mapsto r\text{-simplex
}\{a_{i_0},a_{i_1},\ldots,a_{i_r}\}\text{ of }BC(L)
$$
induces an isomorphism from $C_*(L_0;\Delta k)$ to the simplicial
chain complex of $BC(L)$. 
\end{proposition}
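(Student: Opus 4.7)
The map is defined on a distinguished set of generators of $C_r(L_0; \Delta k)$, so there are two things to check: that it gives a linear isomorphism in each degree, and that it intertwines the cellular and simplicial differentials.

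For the first, I would start by counting. On one side, $C_r(L_0; \Delta k) \cong \bigoplus_{\rk(x) = r+1} A_x \otimes k$, with a $k$-basis given by the elements $\alpha_\sigma$ of (\ref{eq:3}) as $\sigma$ ranges over the $\lambda$-increasing saturated chains $\0 \prec x_0 \prec \cdots \prec x_r$. On the other, an $r$-simplex of $BC(L)$ is, by definition, a set $\{a_{i_0},\ldots,a_{i_r}\}$ arising as the label sequence of some $\lambda$-increasing saturated chain, and the total order $\dashv$ makes each such set correspond to a unique $\lambda$-increasing chain (since the labels of a $\lambda$-increasing chain are strictly $\dashv$-increasing, they determine the chain, and hence the top element $x$, uniquely). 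Thus $\alpha_\sigma \mapsto \{a_{i_0},\ldots,a_{i_r}\}$ is a bijection on bases, and extends to a degree-wise $k$-linear isomorphism $\varphi \colon C_*(L_0; \Delta k) \ra C^{\mathrm{simp}}_*(BC(L); k)$.

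For the second, I would unwind the cellular differential via the complex $T_*$. Writing $\alpha_\sigma = \sum_{\pi \in \Symr} (-1)^{\mathrm{sgn}(\pi)}\,\sigma_\pi$ with $\sigma_\pi$ the saturated chain obtained by permuting the independent atoms $b_0,\ldots,b_r$, the differential in $T_*$ (with constant coefficients) applied to $\alpha_\sigma$ is $\sum_\pi (-1)^{\mathrm{sgn}(\pi)} \sum_{j=0}^{r} (-1)^j d_j \sigma_\pi$, where $d_j$ deletes the element of rank $j+1$ of $\0$. After passing to $C_*$, only the contribution of the faces that are themselves $\lambda$-increasing (modulo a permutation) survive as basis elements. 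The key combinatorial fact I would then invoke is Bj\"orner's shellability-type observation: if $\sigma$ is the $\lambda$-increasing chain with label sequence $(a_{i_0}, a_{i_1}, \ldots, a_{i_r})$, then the contribution of the face $d_j\sigma$ assembles, after anti-symmetrisation over $\Symr[-.3mm][r-1]$, to precisely $(-1)^j \alpha_{\sigma(j)}$, where $\sigma(j)$ is the unique $\lambda$-increasing chain with label sequence $(a_{i_0},\ldots,\widehat{a_{i_j}},\ldots,a_{i_r})$. Under $\varphi$ this lines up exactly with $\sum_j (-1)^j \{a_{i_0},\ldots,\widehat{a_{i_j}},\ldots,a_{i_r}\}$, the standard simplicial boundary.

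The main obstacle is the bookkeeping in the middle step: one must identify, across the long exact sequence of the triple $(L_0^r, L_0^{r-1}, L_0^{r-2})$ used to define the cellular differential, the correct anti-symmetrised representative of each face as a genuine generator of the form $\alpha_{\sigma(j)}$, and track that the signs from $\mathrm{sgn}(\pi)$, from the simplicial $(-1)^j$, and from the permutation required to re-sort the deleted label sequence into $\dashv$-increasing order all conspire to give the simplicial boundary sign $(-1)^j$. Once this sign bookkeeping is made explicit (most cleanly by restricting to the subcomplex spanned by $\lambda$-increasing chains and using that $\lambda$ is an EL-labelling, as in \cite{Bjorner82}), the proposition is immediate from Step~1.
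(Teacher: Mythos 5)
The paper states Proposition~\ref{BChomology} without giving a proof, so I can only assess your plan on its own merits. Your Step~1 is correct: for a $\lambda$-increasing chain with labels $a_{i_0}\dashv\cdots\dashv a_{i_r}$, the $j$-th element must equal $a_{i_0}\vee\cdots\vee a_{i_j}$, so the chain (and its top $x$) is recovered from its label set, and the assignment $\alpha_\sigma\mapsto\{a_{i_0},\dots,a_{i_r}\}$ is a bijection on bases in each degree (after correcting the paper's off-by-one to $\rk(x)=r+1$, as you have).

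In Step~2, however, the role you assign to ``the face $d_j\sigma$'' is not the right mechanism. With the convention of (\ref{eq:6}) where $d_0$ removes the top element, the faces $d_j\sigma_\pi$ for $1\le j\le r$ still have top element $x$ of $L$-rank $r+1$, so they do not lie in $T_{r-1}(L_0^{r-1})$ and contribute nothing to the cellular boundary; moreover they cancel in pairs across the anti-symmetrisation, which is precisely what makes $\alpha_\sigma$ a relative cycle for $(L_0^r,L_0^{r-1})$. The cellular boundary is therefore $\sum_\pi(-1)^{\operatorname{sgn}\pi}d_0\sigma_\pi$ alone, and the simplicial structure emerges not from the faces of the single chain $\sigma$ but from grouping this sum according to which atom $\pi$ places last: the sub-sum over $\pi$ with $\pi(b_r)=a_{i_k}$ anti-symmetrises to $(-1)^{r-k}\alpha_{\tau_k}$, where $\tau_k$ is the $\lambda$-increasing chain with label set $\{a_{i_0},\dots,\widehat{a_{i_k}},\dots,a_{i_r}\}$ and top $\bigvee_{j\ne k}a_{i_j}$. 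Note the sign is $(-1)^{r-k}$, not your $(-1)^k$: the naive $\varphi$ satisfies $\varphi\partial_{\mathrm{cell}}=(-1)^r\partial_{\mathrm{simp}}\varphi$ in degree $r$, so one must twist by $(-1)^{\binom{r+1}{2}}$ (equivalently, orient the simplices of $BC(L)$ with labels in $\dashv$-decreasing order) to get an honest chain isomorphism. Finally, there is no ``subcomplex of $T_*$ spanned by $\lambda$-increasing chains'' to restrict to -- the boundary of such a chain is not again a sum of such chains -- the object you want is the cellular complex $C_*$ in its $\alpha_\sigma$-basis, which Step~1 already provides. With the mechanism and signs corrected as above, your overall strategy of base bijection plus boundary intertwining does go through.
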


The homology itself is uninteresting but using cellular homology in
this way, makes a fairly direct link between the different chain
complexes computing it.  
%Hence, the ordinary simplicial homology of
%$BC(L)$ is isomorphic to the homology $H_*(L\setminus\0;\Delta k)$,
%where $\Delta k$ is the constant sheaf. 

% Indeed, it is possible to make an explicit connection between the
% sheaf homology of $L$ equipped with the constant sheaf and the
% (un-reduced) broken circuit complex. This passes via the ``cellular
% homology'' of $L$ with coefficients in a sheaf (see
% \cite{EverittTurner15} - modified to be homological rather than
% cohomological) and provides an isomorphism between the ``cellular"
% chain complex and the simplicial chain complex of $BC(L)$ (we omit the
% details).  

Despite the encouraging start in Proposition \ref{BChomology},
this situation we are asking about is
different: on one side the constant sheaf should be replaced by the
natural sheaf and on the other side the unreduced broken circuit
complex should be replaced by the reduced one. These are modifications
of quite a different nature: the reduced broken circuit complex may be
defined for any lattice (but no sheaf is present) and the natural
sheaf is a construction only defined for hyperplane arrangements. It
may be possible, to choose bases for the hyperplanes, in such a way
that the combinatorics can be pushed to give an isomorphism of chain
complexes similar to the one in Proposition \ref{BChomology} in this
situation too, but this is not obvious and may well 
be rather artificial. It would take us too far astray from the purpose
of this paper (namely, to present a nice basis-free property of sheaf
homology for graded atomic lattices)  
to attempt this here.  

%On the other hand, once certain conditions are in place - the right
%Euler characteristic, a deletion contraction condition and induction
%based on low rank cases - it is no mystery as to the appearance of
%the beta invariant in the sheaf homology computation.  

%%%%%%%%%%%%%%%%%%%%%%%%%%%%%%%%%%%%%%%%%%%%%%%%%%%%%%%%%%%%%%%%%%%%
%
%
%
\section*{References}

\begin{biblist}

  \bib{Baez15}{article}{
   author={John Baez},
   title={Lattice of partitions},
   journal={American Mathematical Society Blogs},
      eprint={blogs.ams.org}
  % review={\MR{598630 (82a:06001)}},
    }

\bib{Birkhoff79}{book}{
   author={Birkhoff, Garrett},
   title={Lattice theory},
   series={American Mathematical Society Colloquium Publications},
   volume={25},
   edition={3},
   publisher={American Mathematical Society},
   place={Providence, R.I.},
   date={1979},
   pages={vi+418},
   isbn={0-8218-1025-1},
  % review={\MR{598630 (82a:06001)}},
}

\bib{MR1165544}{article}{
   author={Bj\"{o}rner, Anders},
   title={The homology and shellability of matroids and geometric lattices},
   conference={
      title={Matroid applications},
   },
   book={
      series={Encyclopedia Math. Appl.},
      volume={40},
      publisher={Cambridge Univ. Press, Cambridge},
   },
   date={1992},
   pages={226--283},
%   review={\MR{1165544}},
 %  doi={10.1017/CBO9780511662041.008},
 }

\bib{Bjorner82}{article}{
   author={Bj{\"o}rner, Anders},
   title={On the homology of geometric lattices},
   journal={Algebra Universalis},
   volume={14},
   date={1982},
   number={1},
   pages={107--128},
   issn={0002-5240},
%   review={\MR{634422 (83d:05029)}},
%   doi={10.1007/BF02483913},
}

\bib{MR468931}{article}{
   author={Brylawski, Tom},
   title={The broken-circuit complex},
   journal={Trans. Amer. Math. Soc.},
   volume={234},
   date={1977},
   number={2},
   pages={417--433},
   issn={0002-9947},
%   review={\MR{468931}},
 %  doi={10.2307/1997928},
}

\bib{Everitt-Fountain13}{article}{
   author={Everitt, Brent},
   author={Fountain, John},
   title={Partial mirror symmetry, lattice presentations and algebraic
   monoids},
  journal={Proc. London Math. Soc.},
   volume={107},
   date={2013},
   number={3},
   pages={414--450},
%   eprint={arXiv:0808:1686v2}
 }

 % {\color{blue}
 % \bib{EverittTurner19a}{article}{
 %   author={Everitt, Brent},
 %   author={Turner, Paul},
 %   title={Sheaf homology of hyperplane arrangements, Boolean covers
 %     and exterior powers},
 %   eprint={arXiv:1908.04500}
 %   % journal={J. Algebra},
 %   % volume={439},
 %   % date={2015},
 %   % pages={134--158},
 %   % issn={0021-8693},
 %   % review={\MR{3373367}},
 %   % doi={10.1016/j.jalgebra.2015.05.007},
 % }
 % }

\bib{EverittTurner15}{article}{
   author={Everitt, Brent},
   author={Turner, Paul},
   title={Cellular cohomology of posets with local coefficients},
   journal={J. Algebra},
   volume={439},
   date={2015},
   pages={134--158},
   issn={0021-8693},
   % review={\MR{3373367}},
   % doi={10.1016/j.jalgebra.2015.05.007},
}

\bib{EverittTurner14}{article}{
   author={Everitt, Brent},
   author={Turner, Paul},
   title={The homotopy theory of Khovanov homology},
   journal={Algebr. Geom. Topol.},
   volume={14},
   date={2014},
   number={5},
   pages={2747--2781},
   issn={1472-2747},
   % review={\MR{3276847}},
   % doi={10.2140/agt.2014.14.2747},
}

\bib{Folkman66}{article}{
   author={Folkman, Jon},
   title={The homology groups of a lattice},
   journal={J. Math. Mech.},
   volume={15},
   date={1966},
   pages={631--636},
%   review={\MR{0188116 (32 \#5557)}},
}

\bib{Gabriel_Zisman67}{book}{
   author={Gabriel, P.},   author={Zisman, M.},
   title={Calculus of fractions and homotopy theory},
   series={Ergebnisse der Mathematik und ihrer Grenzgebiete, Band 35},
   publisher={Springer-Verlag New York, Inc., New York},
   date={1967},
   pages={x+168},
%   review={\MR{0210125 (35 \#1019)}},
}

\bib{Godement73}{book}{
   author={Godement, Roger},
   title={Topologie alg\'ebrique et th\'eorie des faisceaux},
   language={French},
   note={Troisi\`eme \'edition revue et corrig\'ee;
   Publications de l'Institut de Math\'ematique de l'Universit\'e de
   Strasbourg, XIII;
   Actualit\'es Scientifiques et Industrielles, No. 1252},
   publisher={Hermann},
   place={Paris},
   date={1973},
   pages={viii+283},
%   review={\MR{0345092 (49 \#9831)}},
}

\bib{MR0102537}{article}{
   author={Grothendieck, Alexander},
   title={Sur quelques points d'alg\`ebre homologique},
   language={French},
   journal={T\^{o}hoku Math. J. (2)},
   volume={9},
   date={1957},
   pages={119--221},
   issn={0040-8735},
 }

 \bib{Grunbaum03}{book}{
  author={Gr{\"u}nbaum, Branko},
  title={Convex polytopes},
  series={Graduate Texts in Mathematics},
  volume={221},
  edition={2},
  note={Prepared and with a preface by Volker Kaibel, Victor Klee and
  G\"unter M.\ Ziegler},
  publisher={Springer-Verlag},
  place={New York},
  date={2003},
  pages={xvi+468},
  isbn={0-387-00424-6},
  isbn={0-387-40409-0},
  %review={MR{1976856 (2004b:52001)}},
}

\bib{Khovanov00}{article}{
   author={Khovanov, Mikhail},
   title={A categorification of the Jones polynomial},
   journal={Duke Math. J.},
   volume={101},
   date={2000},
   number={3},
   pages={359--426},
   issn={0012-7094},
  % review={ MR{1740682 (2002j:57025)}},
}

\bib{Lusztig74}{book}{
   author={Lusztig, George},
   title={The discrete series of $GL_{n}$ over a finite field},
   note={Annals of Mathematics Studies, No. 81},
   publisher={Princeton University Press, Princeton, N.J.; University of
   Tokyo Press, Tokyo},
   date={1974},
   pages={v+99},
%   review={\MR{0382419}},
}

\bib{Orlik-Terao92}{book}{
   author={Orlik, Peter},
   author={Terao, Hiroaki},
   title={Arrangements of hyperplanes},
   series={Grundlehren der Mathematischen Wissenschaften [Fundamental
   Principles of Mathematical Sciences]},
   volume={300},
   publisher={Springer-Verlag},
   place={Berlin},
   date={1992},
   pages={xviii+325},
   isbn={3-540-55259-6},
 %  review={\MR{1217488 (94e:52014)}},
}

\bib{Quillen78}{article}{
   author={Quillen, Daniel},
   title={Homotopy properties of the poset of nontrivial $p$-subgroups of a
   group},
   journal={Adv. in Math.},
   volume={28},
   date={1978},
   number={2},
   pages={101--128},
   issn={0001-8708},
   % review={\MR{493916 (80k:20049)}},
   % doi={10.1016/0001-8708(78)90058-0},
}

\bib{Quillen73}{article}{
   author={Quillen, Daniel},
   title={Higher algebraic $K$-theory. I},
   conference={
      title={Algebraic $K$-theory, I: Higher $K$-theories (Proc. Conf.,
      Battelle Memorial Inst., Seattle, Wash., 1972)},
   },
   book={
      publisher={Springer},
      place={Berlin},
   },
   date={1973},
   pages={85--147. Lecture Notes in Math., Vol. 341},
  % review={\MR{0338129 (49 \#2895)}},
}

\bib{Ronan_Smith85}{article}{
   author={Ronan, Mark A.},
   author={Smith, Stephen D.},
   title={Sheaves on buildings and modular representations of Chevalley
   groups},
   journal={J. Algebra},
   volume={96},
   date={1985},
   number={2},
   pages={319--346},
   issn={0021-8693},
 %  review={\MR{810532 (87h:20087)}},
  % doi={10.1016/0021-8693(85)90013-4},
}

\bib{Stanley12}{book}{
   author={Stanley, Richard P.},
   title={Enumerative combinatorics. Volume 1},
   series={Cambridge Studies in Advanced Mathematics},
   volume={49},
   edition={2},
   publisher={Cambridge University Press},
   place={Cambridge},
   date={2012},
   pages={xiv+626},
   isbn={978-1-107-60262-5},
}

\bib{MR2383131}{article}{
   author={Stanley, Richard P.},
   title={An introduction to hyperplane arrangements},
   conference={
      title={Geometric combinatorics},
   },
   book={
      series={IAS/Park City Math. Ser.},
      volume={13},
      publisher={Amer. Math. Soc., Providence, RI},
   },
   date={2007},
   pages={389--496},
%   review={\MR{2383131}},
 }

 \bib{MR0572989}{article}{
   author={Stanley, Richard P.},
   title={Cohen-Macaulay complexes},
   conference={
      title={Higher combinatorics (Proc. NATO Advanced Study Inst., Berlin,
      1976)},
   },
   book={
      publisher={Reidel, Dordrecht},
   },
   date={1977},
   pages={51--62. NATO Adv. Study Inst. Ser., Ser. C: Math. and Phys. Sci.,
   31},
%   review={\MR{0572989}},
 }

\bib{Wachs07}{article}{
   author={Wachs, Michelle L.},
   title={Poset topology: tools and applications},
   conference={
      title={Geometric combinatorics},
   },
   book={
      series={IAS/Park City Math. Ser.},
      volume={13},
      publisher={Amer. Math. Soc.},
      place={Providence, RI},
   },
   date={2007},
   pages={497--615},
%   review={\MR{2383132}},
}

\bib{Weibel94}{book}{
   author={Weibel, Charles A.},
   title={An introduction to homological algebra},
   series={Cambridge Studies in Advanced Mathematics},
   volume={38},
   publisher={Cambridge University Press},
   place={Cambridge},
   date={1994},
   pages={xiv+450},
   isbn={0-521-43500-5},
   isbn={0-521-55987-1},
 %  review={MR{1269324 (95f:18001)}},
}

\bib{MR0453579}{article}{
   author={Wilf, Herbert S.},
   title={Which polynomials are chromatic?},
   language={English, with Italian summary},
   conference={
      title={Colloquio Internazionale sulle Teorie Combinatorie},
      address={Roma},
      date={1973},
   },
   book={
      publisher={Accad. Naz. Lincei, Rome},
   },
   date={1976},
   pages={247--256. Atti dei Convegni Lincei, No. 17},
%   review={\MR{0453579}},
}

\bib{Yuzvinsky91}{article}{
   author={Yuzvinsky, Sergey},
   title={Cohomology of local sheaves on arrangement lattices},
   journal={Proc. Amer. Math. Soc.},
   volume={112},
   date={1991},
   number={4},
   pages={1207--1217},
   issn={0002-9939},
   % review={\MR{1062840}},
   % doi={10.2307/2048675},
 }

 \bib{MR921071}{article}{
   author={Zaslavsky, Thomas},
   title={The M\"{o}bius function and the characteristic polynomial},
   conference={
      title={Combinatorial geometries},
   },
   book={
      series={Encyclopedia Math. Appl.},
      volume={29},
      publisher={Cambridge Univ. Press, Cambridge},
   },
   date={1987},
   pages={114--138},
   review={\MR{921071}},
}

\bib{Ziegler95}{book}{
   author={Ziegler, G{\"u}nter M.},
   title={Lectures on polytopes},
   series={Graduate Texts in Mathematics},
   volume={152},
   publisher={Springer-Verlag},
   place={New York},
   date={1995},
   pages={x+370},
   isbn={0-387-94365-X},
%   review={\MR{1311028 (96a:52011)}},
}

\end{biblist}
%
%
%
%%%%%%%%%%%%%%%%%%%%%%%%%%%%%%%%%%%%%%%%%%%%%%%%%%%%%%%%%

\end{document}